\documentclass{amsart}
\usepackage{amssymb}
\usepackage[all]{xy}

\textwidth	480pt
\textheight	670pt
\oddsidemargin	-5pt
\evensidemargin -5pt
\topmargin	-20pt

\newcommand{\IN}{\mathbb N}
\newcommand{\IR}{\mathbb R}
\newcommand{\IQ}{\mathbb Q}
\newcommand{\IZ}{\mathbb Z}
\newcommand{\V}{\mathcal V}
\newcommand{\W}{\mathcal W}
\newcommand{\w}{\omega}
\newcommand{\itau}{\breve\tau}
\newcommand{\ctau}{\bar\tau}
\newcommand{\la}{\langle}
\newcommand{\ra}{\rangle}
\newcommand{\Tau}{\mathcal T}
\newcommand{\IK}{\mathsf K}
\newcommand{\Top}{\mathsf{Top}}
\newcommand{\K}{\mathcal K}

\newtheorem{theorem}{Theorem}[section]
\newtheorem{problem}[theorem]{Problem}
\newtheorem{proposition}[theorem]{Proposition}
\newtheorem{corollary}[theorem]{Corollary}
\newtheorem{lemma}[theorem]{Lemma}

\newtheorem{example}[theorem]{Example}
\theoremstyle{definition}
\newtheorem{definition}[theorem]{Definition}

\title{Kuratowski monoids of $n$-topological spaces}
\author{T.~Banakh, O.~Chervak, T.~Martynyuk, M.~Pylypovych, A.~Ravsky, M.~Simkiv}
\address{T.~Banakh: Ivan Franko National University of Lviv (Ukraine) and Jan Kochanowski University in Kielce (Poland)}
\email{t.o.banakh@gmail.com}
\address{O.~Chervak, T.~Martynyuk, M.~Pylypovych, M.~Simkiv: Ivan Franko National University of Lviv  (Ukraine)}
\email{oschervak@gmail.com, tetyanka.martynyuk@gmail.com, pylypovych@gmail.com, simkiv.markiyan@gmail.com}
\address{A.Ravsky: Institute for Applied Problems of Mechanics and Mathematics, Lviv (Ukraine)}
\email{oravsky@mail.ru}
\subjclass{54A10, 54H15, 06A11}
\keywords{Kuratowski monoid, polytopological space}
\dedicatory{Dedicated to the 120-th birthday of K.~Kuratowski (1896-1980)}

\begin{document}
\begin{abstract} Generalizing the famous 14-set closure-complement Theorem of Kuratowski from 1922, we prove that for a set $X$ endowed with $n$ pairwise comparable topologies $\tau_1\subset\dots\subset\tau_n$, by repeated application of the operations of complement and closure in the topologies $\tau_1,\dots,\tau_n$ to a subset $A\subset X$ we can obtain at most $2K(n)=2\sum_{i,j=0}^n\binom{i+j}{i}\binom{i+j}{j}$ distinct sets.
\end{abstract}
\maketitle

\section{Introduction}

This paper was motivated by the famous Kuratowski 14-set closure-complement Theorem \cite{Kur22}, which says that the repeated application of the operations of closure and complement to a subset $A$ of a topological space $X$ yields at most 14 pairwise distinct sets\footnote{A complete bibliography related to the Kuratowski 14-set closure-complement Theorem is collected on the web-site (http://www.mathtransit.com/cornucopia.php) created by Mark Bowron.}. More precisely, this theorem says that for any topological space $(X,\tau)$ the operators of complement $c:\mathcal P(X)\to\mathcal P(X)$, $c:A\mapsto X\setminus A$ and closure $\bar\tau:\mathcal P(X)\to\mathcal P(X)$, $\bar\tau:A\mapsto \bar A$, generate a submonoid $\la c,\bar\tau\ra$ of cardinality $\le 14$ in the monoid $\mathcal P(X)^{\mathcal P(X)}$ of all self-maps of the power-set $\mathcal P(X)$ of $X$.

In \cite{SW} Shallitt and Willard constructed two commuting closure operators $p,q:\mathcal P(X)\to\mathcal P(X)$ on the power-set $\mathcal P(X)$ of a countable set $X$ such that the submonoid $\la p,q,c\ra\subset\mathcal P(X)^{\mathcal P(X)}$ generated by these closure operators and the operator of complement is infinite.  In Example~\ref{infinite} below we shall define two metrizable topologies $\tau_1$ and $\tau_2$ on a countable set $X$ such that the closure operators $\ctau_1$ and $\ctau_2$ in the topologies $\tau_1$ and $\tau_2$ generate an infinite submonoid $\la\ctau_1,\ctau_2\ra$ in the monoid $\mathcal P(X)^{\mathcal P(X)}$ of all self-maps of $\mathcal P(X)$. Moreover, for some set $A\subset X$ the set $\{f(A):f\in\la\ctau_1,\ctau_2\ra\}$ is infinite. This shows that Kuratowski's 14-set theorem does not generalize to spaces endowed with two or more topologies.

The situation changes dramatically if two topologies $\tau_1$ and $\tau_2$ on a set $X$ are comparable, i.e., one of these topologies is contained in the other. In this case we shall prove that the closure operators $\ctau_1,\ctau_2:\mathcal P(X)\to\mathcal P(X)$ induced by these topologies together with the operator $c$ of complement generate a submonoid $\la \ctau_1,\ctau_2,c\ra\subset\mathcal P(X)$ of cardinality $\le 126$. In fact, we shall consider this problem in a more general context of multitopological spaces and polytopological spaces.

By a {\em multitopological space} we understand a set $X$ endowed with a family $\mathcal T$ of topologies on $X$. A multitopological space $(X,\Tau)$ is called {\em polytopological} if the family of its topologies  $\Tau$ is linearly ordered by the inclusion relation.  A typical example of a polytopological space is the real line endowed with the Euclidean and Sorgenfrey topologies. Another natural example of a polytopological space is any Banach space, carrying the norm and weak topologies.
A dual Banach space is an example of a polytopological space carrying three topologies: the norm topology, the weak topology and the $*$-weak topology. A topological space $(X,\tau)$ can be thought as a polytopological space $(X,\{\tau\})$ endowed with the family $\{\tau\}$ consisting of a single topology $\tau$.

For a topology $\tau$ on a set $X$ by $\itau:\mathcal P(X)\to\mathcal P(X)$ and $\ctau:\mathcal P(X)\to\mathcal P(X)$ we shall denote the operators of taking the interior and closure with respect to the topology $\tau$. These operators assign to each subset $A\subset X$ its interior $\itau(A)$ and closure $\ctau(A)$, respectively. Since $\tau=\{\breve\tau(A):A\subset X\}=\{X\setminus\bar\tau(A):A\subset X\}$ the topology $\tau$ can be recovered from the operators $\breve\tau$ and $\bar \tau$.

For a multitopological space $\mathbf X=(X,\Tau)$ the submonoid
$$\IK(\mathbf X)=\la \itau,\ctau:\tau\in \Tau\ra$$in $\mathcal P(X)^{\mathcal P(X)}$ generated by the interior and closure operators $\itau,\ctau$ for $\tau\in\Tau$, will be called {\em the Kuratowski monoid} of the multitopological space $\mathbf X$. A somewhat larger submonoid
$$\IK_2(\mathbf X)=\la c,\ctau:\tau\in\Tau\ra$$ in $\mathcal P(X)^{\mathcal P(X)}$ generated by the operator of complement $c$ and the closure operators $\ctau$, $\tau\in\Tau$, will be called {\em the  full Kuratowski monoid} of the multitopological space $\mathbf X$.

Taking into account that $\itau=c\circ\ctau\circ c$ and $\ctau=c\circ\itau\circ c$, we see that $\IK(\mathbf X)\subset \IK_2(\mathbf X)$ and moreover, $$\IK_2(\mathbf X)=\IK(\mathbf X)\cup  \big(c\circ\IK(\mathbf X)\big),$$which implies that $|\IK_2(\mathbf X)|\le 2\cdot|\IK(\mathbf X)|$.

The notion of  a multitopological space has one disadvantage: multitopological spaces do not form a category (it is not clear what to understand under a morphism of multitopological spaces). This problem with multitopological spaces can be easily fixed by introducing their parametric version called  $L$-topological spaces where $(L,\le)$ is a partially ordered set.

 Given a subset $X$ we denote  by $\Top(X)$ the family  of all possible topologies on $X$, partially ordered by the inclusion relation. The family $\Top(X)$ is a lattice whose smallest element is the anti-discrete topology $\tau_a$ and the largest element is the discrete topology $\tau_d$ on $X$. Observe that for the discrete topology the operators $\breve\tau_d$ and $\bar\tau_d$ coincide with the identity operator $1_X$ on $\mathcal P(X)$.

Let $(L,\le)$ be a partially ordered set. By definition, an {\em $L$-topology} on a set $X$ is any monotone map $\tau:L\to\Top(X)$. The monotonicity of $\tau$ means that for any elements $i\le j$ in $L$ we get $\tau(i)\subset\tau(j)$. In the sequel for an element $i\in L$ it will be convenient to denote the topology $\tau(i)$ by $\tau_i$. By an {\em $L$-topological space} we shall understand a pair $(X,\tau)$ consisting of a set $X$ and an $L$-topology $\tau:L\to \Top(X)$ on $X$.

By a morphism between two $L$-topological spaces $(X,\tau)$ and $(Y,\sigma)$ we understand a map $f:X\to Y$ which is continuous as a map between topological spaces $(X,\tau_i)$ and $(Y,\sigma_i)$ for every $i\in L$.  $L$-Topological spaces and their morphisms form a category called the {\em category of $L$-topological spaces}. Each $L$-topological space $\mathbf X=(X,\tau)$ can be thought as a multitopological space endowed with the family of topologies $\{\tau_i\}_{i\in L}$. If the set $L$ is linearly ordered, then the multitopological space $(X,\{\tau_i\}_{i\in L})$ is polytopological.

So we can speak about the Kuratowski monoid $\IK(\mathbf X)$ and the full Kuratowski monoid $\IK_2(\mathbf X)$ of an $L$-topological space $\mathbf X$.

We shall be especially interested in (full) Kuratowski monoids of $n$-topological spaces where $n=\{0,\dots,n-1\}$ is a finite non-zero ordinal (or a natural number). Observe that $n$-topological spaces can be thought as sets  endowed with $n$-topologies $\tau_0\subset\tau_1\subset\dots\subset\tau_{n-1}$.

We shall prove that the upper bound for the cardinality of the Kuratowski monoid $\IK(X)$ of an $n$-topological space $X$  is given by the number
$$K(n)=\sum_{i,j=0}^{n}\textstyle\binom{i+j}{i}\cdot\binom{i+j}{j}$$where $\binom{n}{i}=\frac{n!}{i!(n-i)!}$ is the binomial coefficient.

The main result of this paper is the following theorem.

\begin{theorem}\label{main} For any $n$-topological space $\mathbf X=(X,\Tau)$ its Kuratowski monoid $\IK(\mathbf X)$ has cardinality $|\IK(\mathbf X)|\le K(n)$ and its full Kuratowski monoid $\IK_2(\mathbf X)$ has cardinality
$|\IK_2(\mathbf X)|\le  2\cdot K(n).$
\end{theorem}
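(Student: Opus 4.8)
The plan is to prove the sharper inequality $|\IK(\mathbf X)|\le K(n)$; the bound $|\IK_2(\mathbf X)|\le 2K(n)$ is then immediate from the decomposition $\IK_2(\mathbf X)=\IK(\mathbf X)\cup\big(c\circ\IK(\mathbf X)\big)$ noted in the introduction. I will work inside the monoid $\mathcal P(X)^{\mathcal P(X)}$ equipped with the pointwise partial order: $f\le g$ iff $f(A)\subset g(A)$ for every $A\subset X$. Interior and closure operators are $\le$-monotone, and a composition of monotone maps is monotone, so every element of $\IK(\mathbf X)$ is a monotone operator and composition is monotone in both variables; this is what makes the relations below usable for reductions.

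First I would record the relations satisfied by the generators $\itau_i,\ctau_i$, $i<n$: idempotency $\itau_i\itau_i=\itau_i$ and $\ctau_i\ctau_i=\ctau_i$; the deflationary/inflationary laws $\itau_i\le 1_X\le\ctau_i$; the comparison laws coming from $\tau_0\subset\dots\subset\tau_{n-1}$, namely $\itau_i\le\itau_j$ and $\ctau_j\le\ctau_i$ whenever $i\le j$; the absorption laws $\itau_i\itau_j=\itau_j\itau_i=\itau_{\min(i,j)}$ and $\ctau_i\ctau_j=\ctau_j\ctau_i=\ctau_{\min(i,j)}$ (a $\tau_i$-open set is $\tau_j$-open for $i\le j$, and dually a $\tau_i$-closed set is $\tau_j$-closed); and the Kuratowski-type identities for alternating products, the prototype being $\ctau_i\itau_i\ctau_i\itau_i=\ctau_i\itau_i$ and its dual $\itau_i\ctau_i\itau_i\ctau_i=\itau_i\ctau_i$, together with their mixed-index generalizations. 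A first consequence of the absorption laws is that the submonoid of $\IK(\mathbf X)$ generated by the interiors alone consists of the operators $1_X,\itau_0,\dots,\itau_{n-1}$ and, dually, the submonoid generated by the closures consists of $1_X,\ctau_0,\dots,\ctau_{n-1}$; hence every element of $\IK(\mathbf X)$ is represented by a (possibly empty) word that alternates between single interior letters $\itau_a$ and single closure letters $\ctau_b$.

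The combinatorial core is to turn these relations into a terminating rewriting system on alternating words and to bound the number of irreducible words. Concretely I would (i) determine the complete family of ``mixed Kuratowski identities'' — identities of the form $\ctau_a\itau_b\ctau_c\itau_d=\ctau_a\itau_d$ (valid under suitable inequalities among $a,b,c,d$), together with their duals and one-sided variants — that actually collapse an alternating word, the analogue of $\ctau_i\itau_i\ctau_i\itau_i=\ctau_i\itau_i$; (ii) show that, ordered by length refined by the index data, these rewrites terminate, so that every element of $\IK(\mathbf X)$ is represented by an irreducible alternating word; and (iii) prove that an irreducible word $w=\theta_1\cdots\theta_m$ is governed by a monotonicity-and-separation condition on its two index sequences — the sequence of indices of its interior letters and the sequence of indices of its closure letters, each taken in the order of occurrence — and exhibit an injection from the set of such words into the set of pairs $(P,Q)$ of monotone lattice paths in the grid $\{0,1,\dots,n\}^2$ sharing a common endpoint $(i,j)$. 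Since the number of monotone paths from $(0,0)$ to $(i,j)$ equals $\binom{i+j}{i}=\binom{i+j}{j}$, the number of such pairs is $\sum_{i,j=0}^n\binom{i+j}{i}\binom{i+j}{j}=K(n)$, which yields $|\IK(\mathbf X)|\le K(n)$. For $n=1$ this recovers the seven operators $1_X,\itau_0,\ctau_0,\ctau_0\itau_0,\itau_0\ctau_0,\ctau_0\itau_0\ctau_0,\itau_0\ctau_0\itau_0$ and hence the classical $14$-set theorem.

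The main obstacle is step (iii) together with the correct formulation of (i): one must identify precisely which alternating words collapse and which are genuinely irreducible, and then see that the irreducible ones are parametrized (injectively) by pairs of lattice paths. The delicate part is the bookkeeping of how the two index sequences change under each rewrite and the verification that the ``separation'' half of the normal-form condition is exactly the obstruction to any further Kuratowski collapse; steps (i) and (ii) otherwise reduce to elementary topology, and once (iii) is in place the evaluation of $K(n)$ is routine. (That the bound is attained — e.g. by a suitable iterated Sorgenfrey-type $n$-topology on $\IR$ or on a countable set, for which all $K(n)$ operators, and even all $2K(n)$ sets obtained from a single subset, are pairwise distinct — is a complementary fact not required for Theorem~\ref{main}.)
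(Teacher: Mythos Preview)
Your outline is the paper's own route: it passes to the abstract setting of a partially ordered monoid generated by a finite chain of idempotents through $1$ (what the paper calls a \emph{Kuratowski monoid}), reduces every word to an alternating one via the absorption laws, uses a length-$4$ collapse as the only further rewrite, and then counts the irreducible alternating words. The bound on $\IK_2$ via $\IK_2=\IK\cup c\circ\IK$ is also exactly what the paper does.

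That said, what you have submitted is a plan rather than a proof: the two steps you yourself flag as ``the main obstacle'' are precisely where all the content lies, and you have not carried them out. For step~(i), the only identity needed beyond absorption is: for an alternating block $x_1x_2x_3x_4$ with $x_1x_3=x_1$ and $x_2x_4=x_4$ one has $x_1x_2x_3x_4=x_1x_4$ (so $\ctau_a\itau_b\ctau_c\itau_d=\ctau_a\itau_d$ exactly when $a\le c$ and $d\le b$, and dually); you gesture at this but never state or prove it. For step~(iii) the paper does \emph{not} build a direct lattice-path injection. It instead locates in an irreducible alternating word the leftmost position $2m$ at which the global minimum of the negative letters occurs and proves, by repeated application of the length-$4$ rule, that the negative letters strictly increase in both directions away from position $2m$ while the positive letters strictly decrease in both directions away from position $2m\pm1$; the only residual freedom is whether two neighbours of the pivot coincide, yielding four explicit families $V_\mp,V_\pm,W_-,W_+$ of normal forms. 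A separate (and not entirely short) binomial computation then sums these families to $K(n,p)$, specializing to $K(n)$ when $p=n$. Your lattice-path heuristic is compatible with this --- the left and right halves of a normal form around the pivot do encode two monotone index sequences whose joint parameters correspond to a pair of paths to a common $(i,j)$ --- but turning that into an honest injection still requires exactly the pivot analysis above, which is the substance you have deferred.
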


The upper bounds $|\IK(\mathbf X)|\le K(n)$ and
$|\IK_2(\mathbf X)|\le  2\cdot K(n)$ given in Theorem~\ref{main} are exact as shown in our next theorem.

\begin{theorem}\label{exact} For every $n\in\w$ there is an $n$-topological space $\mathbf X=(X,\Tau)$ such that $|\IK(\mathbf X)|=K(n)$ and $|\IK_2(\mathbf X)|=2\cdot K(n)$.
\end{theorem}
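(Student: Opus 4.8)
The plan is to realize the abstract upper bound of Theorem~\ref{main} by a single ``generic'' polytopological space. The proof of Theorem~\ref{main} does more than bound $|\IK(\mathbf X)|$: it singles out a list of $K(n)$ words in the generators $\itau_i,\ctau_i$ ($0\le i<n$), its \emph{normal forms}, and shows that in every $n$-topological space each element of $\IK(\mathbf X)$ is represented by a normal form — so $\IK(\mathbf X)$ is a quotient of the abstract monoid presented by the relations exploited in that proof, a monoid of cardinality $\le K(n)$; dually $\IK_2(\mathbf X)=\IK(\mathbf X)\cup(c\circ\IK(\mathbf X))$ is a quotient of a monoid of cardinality $\le 2K(n)$. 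Hence it suffices to produce one $n$-topological space $\mathbf X=(X,\Tau)$ and one subset $A\subset X$ on which the $K(n)$ normal forms take pairwise distinct values, with in addition no value $w(A)$ equal to a complement $X\setminus v(A)$. The last point is cheap: if $X$ has a point $x_0$ isolated in the coarsest topology of $\Tau$ and $x_0\in A$, then $\{x_0\}$ is open in every topology of $\Tau$, so $x_0\in\itau(B)$ whenever $x_0\in B$, and trivially $x_0\in\ctau(B)$; inductively $x_0\in w(A)$ for every $w\in\IK(\mathbf X)$, whence $w(A)\ne X\setminus v(A)$. Such a clopen point can always be adjoined as an extra one-point component, so this issue may be ignored.

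For the remaining requirement I would use a ``disjoint sum of witnesses'' argument. Interior and closure commute with topological sums: if $X=\bigsqcup_{s\in S}X_s$ and each $\tau_i$ is the sum of topologies on the $X_s$, then $w\big(\bigsqcup_s A_s\big)=\bigsqcup_s w(A_s)$ for every word $w$ in the interior and closure operators. Consequently, if for every pair of distinct normal forms $u\ne v$ one can find \emph{some} $n$-topological space $\mathbf X_{u,v}$ and a subset $A_{u,v}$ with $u(A_{u,v})\ne v(A_{u,v})$, then the topological sum $\mathbf X$ of a single clopen point $x_0$ with all the $\mathbf X_{u,v}$, equipped with $A=\{x_0\}\cup\bigsqcup_{u,v}A_{u,v}$, distinguishes all $K(n)$ normal forms; together with the first paragraph this exhibits $2K(n)$ distinct sets $w(A)$ and $X\setminus w(A)$ in the orbit $\{f(A):f\in\IK_2(\mathbf X)\}$. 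Combined with Theorem~\ref{main} this yields $|\IK(\mathbf X)|=K(n)$ and $|\IK_2(\mathbf X)|=2K(n)$. Since there are only finitely many pairs $\{u,v\}$ and the witnesses may be taken small — e.g.\ finite Alexandrov spaces (chains of preorders) — the resulting $\mathbf X$ can even be taken finite.

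The hard part is producing the witnesses $\mathbf X_{u,v}$, equivalently proving that the normal-form description of Theorem~\ref{main} is \emph{complete}: the $K(n)$ distinguished words are pairwise inequivalent already over the class of all $n$-topological spaces, i.e.\ the bound is not an overcount. I expect this to be reduced to a short list of ``atomic'' configurations governed by the combinatorics behind $K(n)$: the normal forms are naturally indexed by pairs of monotone lattice paths in $\{0,\dots,n\}$ — which is what the double sum $\sum_{i,j=0}^n\binom{i+j}{i}\binom{i+j}{j}$ counts — and to each elementary discrepancy between two such indices one attaches an explicit small witness (a finite poset-space, or a subset of $\IR$ carrying $n$ nested standard topologies, the Euclidean topology refined step by step by Sorgenfrey- or Michael-line-type topologies). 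Alternatively one may attempt a direct inductive construction of a single universal pair $(X,A)$: start from the classical $14$-set Kuratowski subset of $\IR$ at $n=1$, and at each step adjoin one finer topology while ``thickening'' $A$ so that the new operators $\itau_{n-1},\ctau_{n-1}$ create exactly the required new sets while every earlier distinction survives. In either approach the real obstacle is the precise bookkeeping — matching the geometry of the witnessing set to the binomial count with no collisions, and in particular accounting for the interaction between non-adjacent levels $\tau_i$ and $\tau_j$, which is exactly the phenomenon making $K(n)$ grow faster than any single exponential and the thing that must be shown not to collapse.
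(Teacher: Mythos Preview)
Your high-level architecture is exactly that of the paper: reduce to separating each pair of distinct Kuratowski normal forms by \emph{some} polytopological space, then take the disjoint union of all such witnesses (interior and closure do commute with topological sums, so this works). Your clopen-point observation for handling $\IK_2$ is in fact a tidy simplification the paper does not use; the paper instead separates all pairs in $\widetilde\K_L=\K_L\cup c\K_L$ directly, which costs a few extra (easy) cases.

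The genuine gap is that you have not constructed the witnesses $\mathbf X_{u,v}$, and you say so: ``the real obstacle is the precise bookkeeping''. That bookkeeping \emph{is} the proof. In the paper this is Theorem~\ref{separate}, a substantial case analysis. Given two distinct Kuratowski words $u=u_p\cdots u_0$ and $v=v_q\cdots v_0$, one locates the first index $k$ at which they disagree and then, depending on the sign of $u_{k-1}=v_{k-1}$, on whether $u_k\lessgtr u_{k-2}$, and on the relative position of $v_{k+1}$ versus $v_{k-1}$ (and of $u_k^*$ versus $u_{k-1}$), defines an explicit $*$-morphism $f:L\to L(\mathbf X)$ into the linear generating set of a \emph{$2$-topological} space $\mathbf X$ on a set of cardinality $2$ or $3$ (with topologies chosen from a short finite list such as $\tau_a,\tau_x,\tau_y,\tau_{x,z},\tau_{x,z,xy},\tau_{x,z,yz}$), together with a test set $A$ of size $1$, and verifies by hand that $\hat f(u)(A)\ne\hat f(v)(A)$. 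The point is that one never needs more than two comparable topologies in any single witness: the separating information is local to the index $k$, and the morphism $f$ collapses the rest of $L$ to $1_X$. Your speculative alternatives (Alexandrov chains, iterated Sorgenfrey refinements, an inductive ``thickening'' of the classical $14$-set example) may well work, but none is carried out, and the paper's route shows that the required analysis, while elementary, involves roughly a dozen subcases whose correctness is not obvious without writing them down.
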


This theorem will be proved in Section~\ref{s:separ} (see Corollary~\ref{separateL}). The asymptotics of the sequence $K(n)$ is described in the following theorem, which will be proved in Section~\ref{s:asymp}.

\begin{theorem}\label{asymp} There exists $\lim_{n\to\infty} K(n)/{\binom{2n}{n}^{2}=\sup_{n\to\infty}K(n)}/\binom{2n}{n}^{2}=\frac{16}{9}$ which implies that\newline
$K(n)=\big(\frac{16}{9}+o(1)\big)\cdot\binom{2n}{n}^2=\frac{16^{n+1}}{9\pi n}\cdot\big(1+o(1)\big)$.
\end{theorem}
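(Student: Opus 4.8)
The plan is to isolate the dominant behaviour of the double sum $K(n)=\sum_{i,j=0}^n\binom{i+j}{i}^2$ by rescaling it around its largest summand $\binom{2n}{n}^2$. Writing $i=n-a$, $j=n-b$ we obtain
$$\frac{K(n)}{\binom{2n}{n}^2}=\sum_{a=0}^{n}\sum_{b=0}^{n}T_n(a,b),\qquad T_n(a,b):=\Bigl(\frac{\binom{2n-a-b}{n-a}}{\binom{2n}{n}}\Bigr)^{2}.$$
I would first show that $T_n(a,b)\to 4^{-(a+b)}$ as $n\to\infty$ for each fixed pair $(a,b)$, and then that the summands are bounded by a summable function of $(a,b)$ not depending on $n$; Tannery's theorem (dominated convergence for series) then gives
$$\lim_{n\to\infty}\frac{K(n)}{\binom{2n}{n}^2}=\sum_{a=0}^{\infty}\sum_{b=0}^{\infty}4^{-(a+b)}=\Bigl(\sum_{a=0}^{\infty}4^{-a}\Bigr)^{2}=\Bigl(\tfrac43\Bigr)^{2}=\tfrac{16}{9}.$$

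Both analytic inputs come from one elementary identity. Assuming $b\le a$ (the general case follows from the symmetry $T_n(a,b)=T_n(b,a)$) and using that $0\le a,b\le n$, one has
$$\frac{\binom{2n-a-b}{n-a}}{\binom{2n}{n}}=\prod_{i=0}^{a-1}\frac{n-i}{2n-i}\cdot\prod_{j=0}^{b-1}\frac{n-j}{2n-a-j}.$$
Each of these $a+b$ factors tends to $\tfrac12$, so the finite product tends to $2^{-(a+b)}$ and $T_n(a,b)\to 4^{-(a+b)}$. For the uniform estimate observe that every factor $\tfrac{n-i}{2n-i}$ of the first product is $\le\tfrac12$, while every factor $\tfrac{n-j}{2n-a-j}$ of the second product is $\le 1$ (here the inequality $a\le n$ is used); hence $0\le T_n(a,b)\le 4^{-\max(a,b)}$ for all $n$, and $\sum_{a,b\ge0}4^{-\max(a,b)}=\sum_{m\ge0}(2m+1)4^{-m}<\infty$ supplies the required domination.

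Granting $\lim_n K(n)/\binom{2n}{n}^2=\tfrac{16}{9}$, the last displayed asymptotic in the statement is immediate from Stirling's formula $\binom{2n}{n}=\frac{4^n}{\sqrt{\pi n}}\bigl(1+o(1)\bigr)$, which turns $K(n)=\bigl(\tfrac{16}{9}+o(1)\bigr)\binom{2n}{n}^2$ into $K(n)=\frac{16^{n+1}}{9\pi n}\bigl(1+o(1)\bigr)$. It remains to see that the limit equals the supremum, i.e. that $K(n)\le\tfrac{16}{9}\binom{2n}{n}^2$ for every $n$. I would prove this by combining an induction based on the recursion $K(n)=K(n-1)+\binom{2n}{n}^2+2\sum_{k=0}^{n-1}\binom{n+k}{k}^2$ (obtained by counting the pairs $(i,j)$ with $\max\{i,j\}=n$) with a direct check of finitely many small values of $n$; equivalently one may show the sequence $a_n:=K(n)/\binom{2n}{n}^2$ is non-decreasing once $n$ exceeds a small explicit threshold.

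The main obstacle is exactly this last point. The sequence $a_n$ is \emph{not} monotone --- it equals $7/4$ at $n=1$ and $n=2$ and then decreases --- so the bound $K(n)\le\tfrac{16}{9}\binom{2n}{n}^2$ cannot be obtained summand by summand: already $T_n(1,1)=\bigl(\tfrac{n}{2(2n-1)}\bigr)^2>\tfrac1{16}=4^{-(1+1)}$, so each finite partial sum overshoots its limiting value in some terms, and the inequality rests on cancellation among the $T_n(a,b)$, with the governing induction asymptotically tight. The computation of the limit itself --- the product identity, the termwise limit, the uniform bound, and the appeal to Tannery's theorem --- is by contrast entirely routine.
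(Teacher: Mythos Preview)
Your computation of the limit $\lim_n K(n)/\binom{2n}{n}^2 = 16/9$ is correct and in fact cleaner than the paper's argument. You rescale, identify the termwise limit $4^{-(a+b)}$, supply the uniform bound $T_n(a,b) \le 4^{-\max(a,b)}$ (your product factorisation is valid and the bounds on the factors are correct), and apply dominated convergence. The paper instead first proves the harder upper bound $k(n):=K(n)/\binom{2n}{n}^2 \le 16/9$ for all $n$ by induction, then gets $\liminf_n k(n) \ge 16/9$ from the termwise limits, which together force the limit to be exactly $16/9$. Your route bypasses the induction entirely for the limit itself.

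However, the supremum claim $\sup_n k(n) = 16/9$ (equivalently $K(n) \le \frac{16}{9}\binom{2n}{n}^2$ for every $n$) is not actually proved in your proposal---you correctly diagnose the obstacle (individual terms like $T_n(1,1)$ overshoot their limits, and $k(n)$ is not monotone) but do not get past it. The additive recursion $K(n) = K(n-1) + \binom{2n}{n}^2 + 2\sum_k \binom{n+k}{k}^2$ you suggest does not obviously propagate the bound, and ``eventually non-decreasing'' is left as an unproved claim. The paper's trick is to work with the \emph{ratio} $k(n)$ and exploit the multiplicative identity $c_{a+1,b+1}(n) = c_{1,1}(n)\,c_{a,b}(n-1)$ (where $c_{a,b}(n)=\binom{2n-a-b}{n-a}/\binom{2n}{n}$), which gives
\[
k(n) = 1 + 2\sum_{a=1}^n c_{a,0}(n)^2 + c_{1,1}(n)^2\,k(n-1).
\]
Since $c_{a,0}(n) < 2^{-a}$ for $a \ge 1$ and $c_{1,1}(n) = \frac{n}{2(2n-1)}$, an explicit elementary inequality shows that for $n \ge 5$ the right side, with $k(n-1)$ replaced by $16/9$, is still strictly below $16/9$; the cases $n \le 4$ are checked by hand. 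Some device of this kind is needed to close your argument.
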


 The values of the sequences  $K(n)$ and $2K(n)$ for $n\le 9$, calculated with help of computer are presented in the following table:
\bigskip

\begin{tabular}{|c|r|r|r|r|r|r|r|r|r|r|r|}
\hline
\phantom{\large${}^{|}$}$n$&0&1 &2 & 3       & 4     & 5     & 6      & 7    & 8   & 9\\
\hline
\phantom{\large${}^{|}$}$K(n)$&1&7&63&697&8549&111033&1495677&20667463&291020283&4157865643\\
\hline
\phantom{\large${}^{|}$}$2K(n)$&2&{\bf 14}&126&1394&17098&222066&2991359&41334926&582040566&8315731286\\
\hline
\end{tabular}
\medskip

In particular, the Kuratowski monoid $\IK(\mathbf X)$ of any topological space $\mathbf X=(X,\tau)$ consists of 7 elements (some of which can coincide \cite{GJ}):
$$1,\;\;\itau,\;\;\ctau,\;\;\itau\ctau,\;\ctau\itau,\;\;\itau\ctau\itau,\;\ctau\itau\ctau.$$

For a 2-topological space $\mathbf X=(X,\tau)$ endowed with two topologies $\tau_0\subset\tau_1$ the number of elements of the Kuratowski monoid $\IK(\mathbf X)$ increases to 63 (see Proposition~\ref{quot}):
$$\begin{aligned}
&1,\;\;\itau_0,\;\itau_1,\;\ctau_0,\;\ctau_1,\\
&\itau_0\ctau_0,\;\itau_0\ctau_1,\;\itau_1\ctau_0,\;\itau_1\ctau_1,\;
\ctau_0\itau_0,\;\ctau_0\itau_1,\;\ctau_1\itau_0,\;\ctau_1\itau_1,\;\\
&\itau_0\ctau_0\itau_0,\; \itau_0\ctau_0\itau_1,\; \itau_0\ctau_1\itau_0,\;\itau_0\ctau_1\itau_1,\;
\itau_1\ctau_0\itau_0,\; \itau_1\ctau_0\itau_1,\; \itau_1\ctau_1\itau_0,\;\itau_1\ctau_1\itau_1,\;\\
&\ctau_0\itau_0\ctau_0,\; \ctau_0\itau_0\ctau_1,\; \ctau_0\itau_1\ctau_0,\; \ctau_0\itau_1\ctau_1,\;
\ctau_1\itau_0\ctau_0,\; \ctau_1\itau_0\ctau_1,\; \ctau_1\itau_1\ctau_0,\; \ctau_1\itau_1\ctau_1,\;\\
&\itau_0\ctau_0\itau_0\ctau_1,\;\itau_0\ctau_0\itau_1\ctau_1,\;
\itau_1\ctau_0\itau_0\ctau_0,\;\itau_1\ctau_0\itau_0\ctau_1,\;\itau_1\ctau_0\itau_1\ctau_1,\;
\itau_1\ctau_1\itau_0\ctau_0,\;\itau_1\ctau_1\itau_0\ctau_1,\\
&\ctau_0\itau_0\ctau_0\itau_1,\;\ctau_0\itau_0\ctau_1\itau_1,\;
\ctau_1\itau_0\ctau_0\itau_0,\;\ctau_1\itau_0\ctau_0\itau_1,\;\ctau_1\itau_0\ctau_1\itau_1,\;
\ctau_1\itau_1\ctau_0\itau_0,\;\ctau_1\itau_1\ctau_0\itau_1,\\
&\itau_1\ctau_0\itau_0\ctau_0\itau_1,\; \itau_1\ctau_1\itau_0\ctau_1\itau_1,\;
\itau_0\ctau_0\itau_0\ctau_1\itau_1,\; \itau_1\ctau_1\itau_0\ctau_0\itau_0,\; \itau_1\ctau_0\itau_0\ctau_1\itau_1,\;\itau_1\ctau_1\itau_0\ctau_0\itau_1,\; \\
&\ctau_1\itau_0\ctau_0\itau_0\ctau_1,\; \ctau_1\itau_1\ctau_0\itau_1\ctau_1,\;
\ctau_0\itau_0\ctau_0\itau_1\ctau_1,\; \ctau_1\itau_1\ctau_0\itau_0\ctau_0,\; \ctau_1\itau_0\ctau_0\itau_1\ctau_1,\;\ctau_1\itau_1\ctau_0\itau_0\ctau_1,\;\\
&\itau_1\ctau_1\itau_0\ctau_0\itau_1\ctau_1,\; \itau_1\ctau_0\itau_0\ctau_0\itau_1\ctau_1,\;\itau_1\ctau_1\itau_0\ctau_0\itau_0\ctau_1,\\
&\ctau_1\itau_1\ctau_0\itau_0\ctau_1\itau_1,\; \ctau_1\itau_0\ctau_0\itau_0\ctau_1\itau_1,\;\ctau_1\itau_1\ctau_0\itau_0\ctau_0\itau_1,\;\\
&\itau_1\ctau_1\itau_0\ctau_0\itau_0\ctau_1\itau_1,\;\; \ctau_1\itau_1\ctau_0\itau_0\ctau_0\itau_1\ctau_1.
\end{aligned}
$$

\section{The Kuratowski monoid of a saturated polytopological space}

In this section we introduce a class of $n$-topological spaces $\mathbf X=(X,\Tau)$ whose Kuratowski monoids $\IK(\mathbf X)$ have cardinality strictly smaller than $K(n)$.

A multitopological space $(X,\tau)$ is called {\em saturated\/} if for any topologies $\tau_0,\tau_1\in\Tau$ each non-empty open subset $U\in\tau_0$ has non-empty interior in the topology $\tau_1$.
A typical example of a saturated multitopological space is the real line $\IR$ endowed with the 2-element family $\Tau=\{\tau_0,\tau_1\}$ consisting of the Euclidean and Sorgenfrey topologies $\tau_0\subset\tau_1$.

For a linearly ordered set $L$ an $L$-topological space $(X,\tau)$ is defined to be {\em saturated} if the multitopological space $(X,\{\tau_i\}_{i\in L})$ is saturated.

\begin{theorem}\label{satur} For a saturated polytopological space $\mathbf X=(X,\Tau)$ the Kuratowski monoid $\IK(\mathbf X)$ coincides with the set
$$\{1\}\cup\{\itau,\ctau,\;\itau\ctau,\ctau\itau,\;\itau\ctau\itau,\ctau\itau\ctau:\tau\in\Tau\}$$and hence has cardinality $|\IK(\mathbf X)|\le 1+6\cdot|\Tau|$.
\end{theorem}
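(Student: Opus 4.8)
The plan is to reduce everything to classical one-topology identities plus a single new mixing identity coming from saturation. For a single topology $\tau$, the Kuratowski monoid is generated by $\itau$ and $\ctau$ subject to $\itau\itau=\itau$, $\ctau\ctau=\ctau$, $\itau\ctau\itau\ctau=\itau\ctau$ and $\ctau\itau\ctau\itau=\ctau\itau$, which collapses $\la\itau,\ctau\ra$ to the seven elements listed in the introduction; I would recall this as the base case. The essential extra input is: \emph{in a saturated polytopological space, for any $\sigma,\tau\in\Tau$ one has $\ctau\,\isig=\ctau$ on closed-$\isig$-open-type sets}, or more usefully, the identity $\isig\,\ctau\,\isig=\isig\,\ctau$ whenever $\sigma,\tau\in\Tau$ — equivalently $\ctau\,\isig\,\csig=\ctau\,\csig$. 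Let me first isolate the cleanest such identity.

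The key lemma I would prove is: \emph{if $\mathbf X$ is saturated, then for all $\sigma,\tau\in\Tau$, $\ctau\,\isig\,\csig\,\itau=\ctau\,\itau$ and dually $\isig\,\ctau\,\isig\,\csig=\isig\,\csig$.} The point is that saturation says every nonempty $\sigma$-open set has nonempty $\tau$-interior, hence a $\sigma$-open set that is $\tau$-closed-from-inside interacts trivially. Concretely: for any $A$, the set $U=\isig\,\csig\,\itau(A)$ is $\sigma$-open, and I claim $\ctau(U)=\ctau(\itau(A))$. One inclusion is monotonicity ($\itau(A)\subset\csig\,\itau(A)$ need not hold, so I must be careful) — actually the right normalization is to work with $\itau$-interiors: note $\itau(A)\subset A$, $\itau(A)$ is $\tau$-open hence (comparability!) we don't directly get $\sigma$-open, so the ordering of the topologies matters and I should split into the two cases $\sigma\subset\tau$ and $\tau\subset\sigma$, using in each the direction of inclusion together with saturation to show the relevant composite of four alternating operators collapses to two. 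This is exactly the mechanism that in the $2$-topological non-saturated case produces words of length up to $7$ but here truncates them at length $3$.

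Once the collapsing identities are in hand, the argument is bookkeeping: every element of $\IK(\mathbf X)$ is represented by a reduced word in the alphabet $\{\itau,\ctau:\tau\in\Tau\}$; using $\itau\itau=\itau$, $\ctau\ctau=\ctau$ we may assume the word strictly alternates between interior-type and closure-type letters; using the single-topology idempotency-of-length-$2$ identities we handle consecutive same-topology blocks; and using the new saturation identities we show that any alternating word of length $\ge 4$ reduces — replacing an internal $\isig\,\ctau$ or $\ctau\,\isig$ junction by a shorter composite — so that only words of length $\le 3$ survive, each involving a single topology $\tau\in\Tau$. That yields exactly the set $\{1\}\cup\{\itau,\ctau,\itau\ctau,\ctau\itau,\itau\ctau\itau,\ctau\itau\ctau:\tau\in\Tau\}$ and the bound $|\IK(\mathbf X)|\le 1+6|\Tau|$.

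The main obstacle will be proving the collapsing identities correctly, i.e.\ verifying that in a saturated polytopological space a genuinely alternating length-$4$ word in two (possibly different) topologies already coincides with a length-$\le 3$ word. This requires using comparability of the two topologies to put the operators in the "right order" and then invoking saturation to kill the innermost interior or closure; getting the inclusions to go the right way — and checking that the reduction does not depend on which of $\sigma\subset\tau$ or $\tau\subset\sigma$ holds — is the delicate point. Everything after that is routine rewriting.
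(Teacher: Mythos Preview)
Your plan aims in the right direction but misses the decisive identity, and because of that the reduction step you sketch does not close.

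The paper's proof rests on a single \emph{length-two} identity: saturation gives, for any $\tau_0,\tau_1\in\Tau$,
\[
\ctau_0\,\itau_1=\ctau_0\,\itau_0,
\]
and conjugating by the complement $c$ yields $\itau_0\,\ctau_1=\itau_0\,\ctau_0$. From these it is immediate that in any alternating word you may replace the topology of the second letter by that of the first; iterating from the left collapses every alternating word to a word in a \emph{single} topology, whence $\IK(\mathbf X)=\bigcup_{\tau\in\Tau}\IK(X,\tau)$ and the bound $1+6|\Tau|$ follows from the classical seven-element case.

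Your proposed key lemma, by contrast, targets only very special length-four words (outer letters from one topology, inner letters from another): $\ctau\,\isig\,\csig\,\itau=\ctau\,\itau$ and $\isig\,\ctau\,\isig\,\csig=\isig\,\csig$. These identities are true, but they do not suffice for the reduction you describe. A general length-four alternating word looks like $\ctau_a\,\itau_b\,\ctau_c\,\itau_d$ with up to four distinct topologies, and your lemma says nothing about it; so the claim that ``any alternating word of length $\ge4$ reduces \dots\ to words of length $\le3$, each involving a single topology'' is not supported. The case split ``$\sigma\subset\tau$ vs.\ $\tau\subset\sigma$'' you anticipate is also unnecessary: the definition of saturation is symmetric in the two topologies, and the two-letter identity above holds for every ordered pair $(\tau_0,\tau_1)$. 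Once you prove that identity directly (for $x\in\itau_1(A)$ and a $\tau_0$-neighbourhood $V$ of $x$, apply saturation to the nonempty $\tau_1$-open set $V\cap\itau_1(A)$ to find a nonempty $\tau_0$-open subset, which then lies in $\itau_0(A)$), the rest of your outline becomes a one-line bookkeeping argument.
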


\begin{proof} The definition of a saturated polytopological space $\mathbf X=(X,\Tau)$ implies that $\ctau_0\itau_1=\ctau_0\itau_0$ for any topologies $\tau_0,\tau_1\in\Tau$.
Applying to this equality the operator $c$ of taking complement, we get
$$\itau_0\ctau_1=c\ctau_0cc\itau_1c=c\ctau_0\itau_1c=c\ctau_0\itau_0c=c\ctau_0cc\itau_0c=\itau_0\ctau_0.$$
This implies that $$\IK(\mathbf X)=\bigcup_{\tau\in\Tau}\IK(X,\tau)=\bigcup_{\tau\in\Tau}\{1,\itau,\ctau,\itau\ctau,\ctau\itau,\itau\ctau\itau,\ctau\itau\ctau\}$$and hence $|\IK(\mathbf X)|\le 1+\sum_{\tau\in\Tau}(|\IK(X,\tau)|-1)\le 1+6\cdot |\Tau|.$
\end{proof}

\begin{example} Let $\mathbf X=(\IR,\{\tau_0,\tau_1\})$ be the real line $\IR$ endowed with the Euclidean topology $\tau_0$ and the Sorgenfrey topology $\tau_1$. The Kuratowski monoid $\IK(\mathbf X)$ has cardinality $|\IK(\mathbf X)|=13$. Moreover, for some set $A\subset\IR$ the family $\IK(\mathbf X)A=\{f(A):A\in \IK(\mathbf X)\}$ has cardinality $|\IK(\mathbf X)A|=|\IK(\mathbf X)|=13$.
\end{example}

\begin{proof} The upper bound $|\IK(\mathbf X)|\le 13$ follows from Theorem~\ref{satur}. To prove the lower bound $|\IK(\mathbf X)|\ge 13$, consider the subset
$$A=\bigcup_{n=0}^\infty[3^{-2n-1},3^{-2n})\cup(1,2)\cup\{3\}\cup\big([4,5)\cap\IQ\big)$$and observe that
the following 13 subsets of $\IR$ are pairwise distinct, witnessing that $|\IK(\mathbf X)|\ge|\{f(A):f\in\IK(\mathbf X)\}|\ge 13$.

\begin{center}
\begin{tabular}{|r|l|}
\hline
\phantom{${}^{\big|}$}$A$&$\hskip26pt\bigcup_{n=0}^\infty[3^{-2n-1},3^{-2n})\cup(1,2)\cup\{3\}\cup\big([4,5)\cap\IQ\big)$\\
$\itau_0(A)$&$\hskip26pt\bigcup_{n=0}^\infty(3^{-2n-1},3^{-2n})\cup(1,2)$\\
$\ctau_0(A)$&$\{0\}\cup\bigcup_{n=0}^\infty[3^{-2n-1},3^{-2n}]\cup[1,2]\cup\{3\}\cup[4,5]$\\
$\ctau_0\itau_0(A)$&$\{0\}\cup\bigcup_{n=0}^\infty[3^{-2n-1},3^{-2n}]\cup[1,2]$\\
$\itau_0\ctau_0(A)$&$\hskip26pt\bigcup_{n=0}^\infty(3^{-2n-1},3^{-2n})\cup[1,2)\cup(4,5)$\\
$\itau_0\ctau_0\itau_0(A)$&$\hskip26pt\bigcup_{n=0}^\infty(3^{-2n-1},3^{-2n})\cup[1,2)$\\
$\ctau_0\itau_0\ctau_0(A)$&$\{0\}\cup\bigcup_{n=0}^\infty[3^{-2n-1},3^{-2n}]\cup[1,2]\cup[4,5]$\\
$\itau_1(A)$&$\hskip26pt\bigcup_{n=0}^\infty[3^{-2n-1},3^{-2n})\cup(1,2)$\\
$\ctau_1(A)$&$\{0\}\cup\bigcup_{n=0}^\infty[3^{-2n-1},3^{-2n})\cup[1,2)\cup\{3\}\cup[4,5)$\\
$\ctau_1\itau_1(A)$&$\{0\}\cup\bigcup_{n=0}^\infty[3^{-2n-1},3^{-2n})\cup[1,2)$\\
$\itau_1\ctau_1(A)$&$\hskip26pt\bigcup_{n=0}^\infty[3^{-2n-1},3^{-2n})\cup[1,2)\cup[4,5)$\\
$\itau_1\ctau_1\itau_1(A)$&$\hskip26pt\bigcup_{n=0}^\infty[3^{-2n-1},3^{-2n})\cup[1,2)$\\
$\ctau_1\itau_1\ctau_1(A)$&$\{0\}\cup\bigcup_{n=0}^\infty[3^{-2n-1},3^{-2n})\cup[1,2)\cup[4,5)$\phantom{${}_{\big|}$}\\
\hline
\end{tabular}
\end{center}
\end{proof}

Theorem~\ref{satur} gives a partial answer to the following general problem.

\begin{problem} Which properties of a polytopological space $X$ are reflected in the algebraic structure of its Kuratowski monoid $\IK(X)$?
\end{problem}

\section{An example of a multitopological space with infinite Kuratowski monoid}

In this section we shall construct the following example announced in the introduction.

\begin{example}\label{infinite} There is a countable space $X$ endowed with two (incomparable) metrizable topologies $\tau_0$, $\tau_1$ such that the Kuratowski monoid $\IK(\mathbf X)$ of the multitopological space $\mathbf X=(X,\{\tau_0,\tau_1\})$ is infinite. Moreover, for some set $A\subset X$ the family $\{f(A):f\in\IK(\mathbf X)\}$ is infinite.
\end{example}

\begin{proof} Take any countable metrizable topological space $X$ containing a decreasing sequence of non-empty subsets $(X_n)_{n\in\w}$ such that $X_0=X$, $\bigcap_{n\in\w}X_n=\emptyset$ and $X_{n+1}$ is nowhere dense in $X_n$ for all $n\in\w$.

To find such a space $X$, take the convergent sequence $S_0=\{0\}\cup\{2^{-n}:n\in\w\}$ and consider the subspace $$X=\{(x_k)_{k\in\w}\in S_0^\w:\exists n\in\w\;\forall k\ge n\;\;x_k=0\}\setminus\{0\}^\w$$of the countable power $S_0^\w$ endowed with the Tychonoff product topology. It is easy to see that the subsets $$X_{n}=\{(x_k)_{k\in\w}\in X:\forall k<n\;\;x_k=0\},\;\;n\in\w,$$ have the required properties: $X_0=X$, $\bigcap_{n\in\w}X_n=\emptyset$ and $X_{n+1}$ is nowhere dense in $X_n$.

On the space $X$ consider two topologies
$$\tau_0=\{U\subset X:\forall n\in\w\;\;\mbox{$U\cap (X_{2n}\setminus X_{2n+2})$ is open in $X_{2n}\setminus X_{2n+2}$}\}$$and
$$\tau_1=\{U\subset X:\forall n\in\w\;\;\mbox{$U\cap (X_{2n+1}\setminus X_{2n+3})$ is open in $X_{2n+1}\setminus X_{2n+3}$}\}.$$
Observe that $\tau_0$ coincides with the topology of the topological sum $\bigoplus_{n\in\w}X_{2n}\setminus X_{2n+2}$ while $\tau_2$ coincides with the topology of the topological sum $\big(\bigoplus_{x\in X_0\setminus X_1}\{x\}\big)\oplus\bigoplus_{n\in\w}X_{2n+1}\setminus X_{2n+3}$.

We claim that the multitopological space $\mathbf X=(X,\{\tau_1,\tau_2\})$ has the required properties.

By $\ctau_1,\ctau_2:\mathcal P(X)\to\mathcal P(X)$ we denote the closure operators in the topologies $\tau_1$ and $\tau_2$, respectively. The nowhere density of $X_{n+1}$ in $X_n$ for all $n\in\w$ and the definition of the topologies $\tau_0,\tau_1$ imply that for every $n\in\w$
\begin{enumerate}
\item $\ctau_0(X\setminus X_{2n+1})=X\setminus X_{2n+2}$;
\item $\ctau_1(X\setminus X_{2n+2})=X\setminus X_{2n+3}$,
\item $\ctau_1\ctau_0(X\setminus X_{2n+1})=X\setminus X_{2n+3}$,
\item $(\ctau_1\ctau_0)^n(X\setminus X_1)=X\setminus X_{2n+1}$.
\end{enumerate}

Therefore, for the set $A=X\setminus X_1$ the sets $(\ctau_1\ctau_0)^n(A)$, $n\in\w$, are pairwise distinct, which implies that the family $\{f(A):f\in\IK(\mathbf X)\}$ is infinite and hence the Kuratowski monoid $\IK(\mathbf X)$ of the multitopological space $\mathbf X=(X,\{\tau_0,\tau_1\})$ is infinite too.
\end{proof}

\section{Kuratowski monoids}

To prove Theorem~\ref{main} we shall use the natural structure of partial order on the monoid $\mathcal P(X)^{\mathcal P(X)}$. For two maps $f,g\in \mathcal P(X)^{\mathcal P(X)}$ we write $f\le g$ if $f(A)\subset g(A)$ for every subset $A\subset X$. This partial order turns $\mathcal P(X)$ into a partially ordered monoid.

By a {\em partially ordered monoid\/} we understand a monoid $M$ endowed with a partial order $\le$ which is {\em compatible} with the semigroup operation of $M$ in the sense that for any points $x,y,z\in M$ the inequality $x\le y$ implies $xz\le yz$ and $zx\le zy$. Recall that a {\em monoid\/} is a semigroup $S$ possessing a two-sided unit $1\in S$.

Observe that for two comparable topologies $\tau_1\subset\tau_2$ on a set $X$ we get
$$\itau_1\le\itau_2\le 1_X\le\ctau_2\le\ctau_1$$
where $1_X:\mathcal P(X)\to\mathcal P(X)$ is the identity transformation of $\mathcal P(X)$. Now we see that for a polytopological space $\mathbf X=(X,\Tau)$ its Kuratowski monoid $\IK(\mathbf X)=\la \itau,\ctau:\tau\in\Tau\ra$ is generated by the linearly ordered set $$L(\mathbf X)=\{\itau:\tau\in\Tau\}\cup\{1_X\}\cup\{\ctau:\tau\in\Tau\}.$$ This leads to the following

\begin{definition} A {\em Kuratowski monoid\/} is a partially ordered monoid $K$ generated by a finite linearly ordered set $L$ containing the unit $1$ of $K$ and consisting of idempotents.

The set $L$ will be called a {\em linear generating set} of the Kuratowski monoid $K$. This set can be written as the union $L=L_{-}\cup \{1\}\cup L_{+}$ where $L_{-}=\{x\in L:x<1\}$ and $L_{+}=\{x\in L:x>1\}$ are the sets of {\em negative} and {\em positive generating elements} of $K$.

A Kuratowski monoid $K$ is called a {\em Kuratowski monoid of type $(n,p)$} if $K$ has a linear generating set $L$ such that $|L_{-}|=n$ and $|L_{+}|=p$.
\end{definition}

For two numbers $n,p\in\w$ consider the number
$$K(n,p)=\sum_{i=0}^n\sum_{j=0}^p\textstyle{\binom{i+j}{i}\cdot\binom{i+j}{j}}$$and observe that $K(n)=K(n,n)$ for every $n\in\w$.

It is easy to see that for each polytopological space $\mathbf X=(X,\Tau)$ endowed with $n=|\Tau|$ topologies, its Kuratowski monoid $\mbox{K}(\mathbf X)$ is a Kuratowski monoid of type $(n,n)$ or $(n-1,n-1)$. The latter case happens if the polytopology $\Tau$ of $\mathbf X$ contains the discrete topology $\tau_d$ on $X$. In this case $\breve\tau_d=1_X=\bar\tau_d$.

 Now we see that Theorem~\ref{main} is a partial case of the following more general theorem, which will be proved in Section~\ref{pf:kur} (more precisely, in Theorem~\ref{represent}).

\begin{theorem}\label{main(n,p)} Each Kuratowski monoid $K$ of type $(n,p)$ has cardinality $|K|\le K(n,p)$.
\end{theorem}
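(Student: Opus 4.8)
The plan is to analyze the combinatorial structure of a Kuratowski monoid $K$ of type $(n,p)$ directly, using the partial order and the idempotency of the linear generators. Write the linear generating set as $L = L_- \cup \{1\} \cup L_+$ with $L_- = \{a_1 < \dots < a_n\}$ consisting of ``interior-like'' idempotents (each $\le 1$) and $L_+ = \{b_1 > \dots > b_p\}$ consisting of ``closure-like'' idempotents (each $\ge 1$), chosen so that in fact $a_1 \le \dots \le a_n \le 1 \le b_p \le \dots \le b_1$. The first step is to establish the basic rewriting rules: since each $x \in L$ is idempotent and $L$ is linearly ordered, in any product of generators two adjacent factors from the same side collapse --- if $x \le y$ are both in $L_-$ then $xy = yx = x$ (because $xy \le xx = x$, $xy \le y\cdot 1$... more carefully $xy\le x$ and $xy \le y$, and one shows equality using $x\le 1$ so $x = x\cdot x \le x y$, etc.), and dually on $L_+$. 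Hence every element of $K$ is represented by an \emph{alternating word} $w = c_1 c_2 \cdots c_m$ where the $c_k$ alternate between $L_-$ and $L_+$. The goal is to show the number of distinct elements arising this way is at most $K(n,p) = \sum_{i=0}^n\sum_{j=0}^p \binom{i+j}{i}\binom{i+j}{j}$.

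The heart of the argument is a second family of reduction rules coming from the interaction of the two sides, analogous to the classical identity $\bar\tau\,\breve\tau\,\bar\tau\,\breve\tau = \bar\tau\,\breve\tau$ for a single topology. For one topology one has $\itau\ctau\itau\ctau = \itau\ctau$ and $\ctau\itau\ctau\itau = \ctau\itau$, which caps alternating words at length $3$. In the polytopological setting the key monotonicity inequalities $a_i \le 1 \le b_j$ give, for any indices, relations of the form $a_i b_j a_i \le a_i$ is false in general, but one does get that a \emph{long} alternating word can be shortened or its index sequence forced to be monotone in a controlled way. Concretely, I expect to prove: any element of $K$ has a canonical representative that is an alternating word $c_1\cdots c_m$ whose sequence of $L_-$-entries, read left to right, is weakly increasing then weakly decreasing (or some similar unimodality/monotonicity constraint) and likewise for the $L_+$-entries, and whose length is bounded in terms of how ``deep'' into $L_-$ and $L_+$ the word reaches. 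Counting the admissible index patterns --- a lattice-path count, where choosing to go one step further into $L_-$ or one step further into $L_+$ corresponds to a step in a grid --- produces exactly the double sum $\sum_{i,j}\binom{i+j}{i}\binom{i+j}{j}$: the factor $\binom{i+j}{i}$ counts the ways to interleave $i$ left-moves with $j$ right-moves on the way ``in'', and the second such factor counts the way ``out''.

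The main obstacle, and where the real work lies, is establishing the precise canonical form: showing that the extra relations forced by $a_i \le 1 \le b_j$ (together with idempotency) suffice to rewrite \emph{every} word into one of the claimed normal forms, i.e.\ that no genuinely new elements can hide in longer or non-monotone words. This requires a careful induction on word length, at each step using a relation like $b_j\, a_i\, b_{j'} = b_j\, a_i\, b_{j'}$ collapsing when $j' \ge j$ via $a_i \le 1$ (so $b_j a_i b_{j'} \le b_j b_{j'} = b_{\min}$) paired with the opposite inequality, and symmetric manipulations, to force monotonicity of the nested index sequences. Once the normal-form lemma is in hand, the cardinality bound $|K| \le K(n,p)$ is the bijective lattice-path count sketched above, and Theorem~\ref{main} follows by taking $n = p$ (or $n = p = |\Tau| - 1$ when the discrete topology is present) since $K(\mathbf X)$ is then a Kuratowski monoid of type $(n,n)$, with the factor of $2$ for $\IK_2(\mathbf X)$ coming from the already-noted decomposition $\IK_2(\mathbf X) = \IK(\mathbf X) \cup c\circ\IK(\mathbf X)$.
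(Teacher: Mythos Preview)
Your broad strategy matches the paper's: collapse adjacent same-side generators to get alternating words, then find a further reduction that forces a finite list of canonical forms, then count those forms. But the proposal stops short of the actual content of the proof, and the guesses you make about the missing steps are not quite right.

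The key technical lemma you are missing is the \emph{four-letter reduction}: if $x_1x_2x_3x_4$ is alternating and $x_1x_3=x_1$, $x_2x_4=x_4$ (equivalently, $x_1$ is at least as extreme as $x_3$ on its side and $x_4$ is at least as extreme as $x_2$ on its side), then $x_1x_2x_3x_4=x_1x_4$. This is what replaces the classical $\ctau\itau\ctau\itau=\ctau\itau$, and it is proved by two opposite inequalities using only $x\le 1\le y$ and compatibility of the order with multiplication. Your suggested three-letter collapse ``$b_j a_i b_{j'}$ when $j'\ge j$'' is not the right relation and does not hold in general.

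With that lemma in hand, the canonical form is not ``weakly increasing then weakly decreasing'' but the following sharper shape. Write an irreducible alternating word with its $L_-$-letters in the even positions. Let $x_{2m}$ be the leftmost occurrence of the minimum $L_-$-letter. Then the $L_-$-letters are \emph{strictly increasing} as you move away from position $2m$ in either direction, and the $L_+$-letters are \emph{strictly decreasing} as you move away from position $2m+1$ (or $2m-1$) in either direction; at the very tip one may have a single repeat $x_{2m}=x_{2m+2}$ or $x_{2m-1}=x_{2m+1}$. The paper calls these $V_\mp$, $V_\pm$, $W_-$, $W_+$ words. Each of Lemmas~\ref{l5}--\ref{l8} in the paper derives one monotonicity constraint by locating, in any putative violation, a four-letter subword to which the reduction lemma applies.

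Finally, the count is not quite a straight lattice-path bijection: one parameterizes a canonical word by the depths $a,b$ of its tip in $L_-$ and $L_+$ and the lengths $l,r$ of the strictly monotone runs on each side, giving a sum of products of binomial coefficients that telescopes (via Vandermonde) to $\sum_{a=0}^{n}\sum_{b=0}^{p}\binom{a+b}{a}\binom{a+b}{b}$. Your ``way in / way out'' heuristic is close in spirit but would need to be made precise against the actual $V/W$ classification to yield $K(n,p)$ on the nose.
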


The values of the double sequence $K(n,p)$ for $n,p\le 9$ were calculated by computer:
\medskip

\begin{tabular}{|l|rrrrrrrrrr}
\hline
$n\backslash p$
  &0      &    1  &    2   & 3       & 4     & 5     & 6      & 7    & 8   & 9\\
\hline
0 &{\bf 1}& 2     & 3      & 4       & 5     & 6     & 7      &8&9&10 \\
1 & 2     &{\bf7} & 17     & 34      & 60    & 97    & 147    & 212  & 294  & 395\\
2 & 3     & 17    &{\bf 63}& 180     & 431  & 909   &1743    & 3104 & 5211 &8337\\
3 & 4     & 34    & 180    &{\bf 697}& 2173 & 5787 &13677   & 29438 & 58770&110296\\
4 & 5     & 60    & 431    & 2173    &{\bf8549}&28039&80029 &204690 &479047&1041798\\
5 & 6     & 97    & 909    & 5787    & 28039   &{\bf111033}&376467 &1128392& 3059118&7629873\\
6 & 7     & 147   &1743    &13677    &80029    &376467     &{\bf1495677}&5192258&16140993&45761773 \\
7 & 8     & 212   &3104    &29438    &204690   &1128392    &5192258     &{\bf20667463}&73025423&233519803\\
8 & 9     & 294   &5211    &58770    &479047   &3059118    &16140993    &73025423&{\bf291020283} &1042490763\\
9 &10     &395    &8337    &110296   &1041798  &7629873    &45761773   &233519803&1042490763&{\bf 4157865643}\\
\end{tabular}

\section{Kuratowski words}

Theorem~\ref{main(n,p)} will be proved by showing that each element of a Kuratowski monoid $K$ with linear generating set $L$ can be represented by a Kuratowski word in the alphabet $L$.
Kuratowski words are defined as follows.

By a {\em pointed linearly ordered set} we understand a linearly ordered set $L$ with a distinguished element $1\in L$ called the {\em unit} of $L$. This element divides the set $L\setminus \{1\}$ into negative and positive parts  $L_-=\{x\in L:x<1\}$ and $L_+=\{x\in L:x>1\}$, respectively. By $FS_L=\bigcup_{n=1}^\infty L^n$ we denote the free semigroup over $L$. It consists of non-empty words in the alphabet $L$. The semigroup operation on $FS_L$ is defined as the concatenation of words. The set $L$ is identified with the set $L^1$ of words of length $1$ in the alphabet $L$.

A word $w=x_1\dots x_n\in FS_L$ of length $n$ is called {\em alternating} if for each natural number $i$ with $1\le i<n$ the doubleton $\{x_i,x_{i+1}\}$ intersects both sets $L_{_-}$ and $L_+$. According to this definition, words of length 1 also are alternating. On the other hand, an alternating word of length $\ge 2$ does not contain a letter equal to $1$.

An alternating word $x_0\cdots x_n\in FS_L$ of length $n+1\ge 2$ is defined to be
\begin{itemize}
\item a {\em $V_\mp$-word} iff there is an integer number $m\in\{0,\dots,n-1\}$ such that the sequences $(x_{m+2i})_{0{\le} i{\le} \frac{n-m}2}$, $(x_{m-2i})_{0{\le}i\le\frac{m}2}$ are strictly increasing in $L_-$ and the sequences $(x_{m+1+2i})_{0{\le}i{<}\frac{n-m}2}$, $(x_{m+1-2i})_{0{\le}i\le\frac{m+1}2}$ are strictly decreasing in $L_+$;
\item a {\em $V_\pm$-word} if there is a number $m$ such that $m\in\{0,\dots,n-1\}$ such that the sequences $(x_{m+2i})_{0{\le}i{\le}\frac{n-m}2}$, $(x_{m-2i})_{0{\le}i{\le}\frac{m}2}$ are strictly decreasing in $L_+$ and the sequences $(x_{m+1+2i})_{0{\le}i{<}\frac{n-m}2}$, $(x_{m+1-2i})_{0{\le}i\le\frac{m+1}2}$ are strictly increasing in $L_-$;
\item a {\em $W_-$-word} if there is a number $m\in\{1,\dots,n-1\}$ such that $x_{m-1}=x_{m+1}\in L_-$, the sequences $(x_{m+1+2i})_{0{\le}i{<}\frac{n-m}2}$, $(x_{m-1-2i})_{0{\le}i{\le}\frac{m-1}2}$ are strictly increasing in $L_-$ and the sequences $(x_{m+2i})_{0{\le}i{\le}\frac{n-m}2}$, $(x_{m-2i})_{0{\le}i{\le}\frac{m}2}$ are strictly decreasing in $L_+$;
\item a {\em $W_+$-word} if there is a number $m\in\{1,\dots,n-1\}$ such that $x_{m-1}=x_{m+1}\in L_+$, the sequences $(x_{m+1+2i})_{0{\le}i{<}\frac{n-m}2}$, $(x_{m-1-2i})_{0{\le}i\le\frac{m-1}2}$ are strictly decreasing  in $L_+$ and the sequences $(x_{m+2i})_{0{\le}i{\le}\frac{n-m}2}$, $(x_{m-2i})_{0{\le}i\le\frac{m}2}$ are strictly increasing  in $L_-$.
\end{itemize}

By $\V_\mp$ (resp. $\V_\pm$, $\W_-$, $\W_+$) we denote the family of all $V_\mp$-words (resp. $V_\pm$-words, $W_-$-word, $W_+$-words) in the alphabet $L$. It is easy to see that
the families of words $\V_\mp$, $\V_\pm$, $\W_-$, $\W_+$ are pairwise disjoint.
Words that belong to the set $$\K_L=L^1\cup \V_\pm\cup\V_\mp\cup\W_-\cup\W_+$$are called {\em Kuratowski words} in the alphabet $L$.

Let us calculate the cardinality of the set $\K_L$ depending on the cardinalities $n=|L_-|$ and $p=|L_+|$ of the negative and positive parts of $L$.

For non-negative integers $n,r$ by $\binom{n}{r}$ we denote the cardinality of the set of $r$-element subsets of an $n$-element set. It is clear that
$${\textstyle\binom{n}{r}}=\begin{cases}
\frac{n!}{r!(n-r)!}&\mbox{if $0\le r\le n$};\\
0&\mbox{otherwise}.
\end{cases}
$$
The numbers $\binom{n}{r}$ will be called binomial coefficients. The following properties of binomial coefficients are well-known (see, e.g. \cite[\S 5.1]{CM}).

\begin{lemma}\label{lb1} For any non-negative integer numbers $m,n,k$ we get
\begin{enumerate}
\item $\binom{n}{k}=\binom{n}{n-k}$,
\item $\binom{n}{k}+\binom{n}{k-1}=\binom{n+1}{k}$, and
\item $\binom{n+m}{k}=\sum_{l=0}^n\binom{n}{l}\binom{m}{k-l}.$
\end{enumerate}
\end{lemma}

In the following theorem we calculate the cardinality of the set $\K_L$ of Kuratowski words.

\begin{theorem}\label{kuratwords} For any finite pointed linearly ordered set $L$ with $n=|L_-|$ and $p=|L_+|$ we get
\begin{enumerate}
\item $|\V_\mp|= \sum\limits_{a=0}^{n-1}\sum\limits_{b=0}^{p-1}\sum\limits_{l=0}^{a}\sum\limits_{r=0}^{a}\binom{a}{l}\binom{a}{r} \binom{b+1}{l+1}\binom{b+1}{r}$,
\item $|\V_\pm|= \sum\limits_{a=0}^{n-1}\sum\limits_{b=0}^{p-1}\sum\limits_{l=0}^{a}\sum\limits_{r=0}^{a} \binom{a}{l}\binom{a}{r}\binom{b+1}{l}\binom{b+1}{r+1}$,
\item $|\W_+|= \sum\limits_{a=0}^{n-1}\sum\limits_{b=0}^{p-1}\sum\limits_{l=0}^{a}\sum\limits_{r=0}^{a} \binom{a}{l}\binom{a}{r}\binom{b+1}{l}\binom{b+1}{r}$,
\item $|\W_-|= \sum\limits_{a=0}^{n-1}\sum\limits_{b=0}^{p-1}\sum\limits_{l=0}^{a}\sum\limits_{r=0}^{a}
    \binom{a}{l}\binom{a}{r}\binom{b+1}{l+1}\binom{b+1}{r+1}$,
\item $|\K_L|=\sum_{a=0}^n\sum_{b=0}^p\textstyle\binom{a+b}{a}\binom{a+b}{b}=K(n,p)$.
\end{enumerate}
\end{theorem}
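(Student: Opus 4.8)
The plan is to prove (1)--(4) by exhibiting, for each of the four families, a bijection with an explicitly parametrised collection of combinatorial data, and then to obtain (5) by summing the results and simplifying with Lemma~\ref{lb1}. The heart of the matter is a \emph{pivot-and-arms} decomposition of a $V$- or $W$-word $w=x_0\cdots x_n$. For a $V_\mp$- or a $W_-$-word the subsequence of $L_-$-letters of $w$ first strictly decreases and then strictly increases, so it attains its minimum $i$ either at a single index $m$ (case $\V_\mp$) or at exactly two indices $m-1,m+1$ carrying the same letter $i$ (case $\W_-$); correspondingly the subsequence of $L_+$-letters attains its maximum $c$ at the single index $m+1$, respectively $m$. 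The cases $\V_\pm,\W_+$ are obtained by interchanging the roles of $L_-$ and $L_+$. In each case this pivot is recovered from $w$ alone, and reading off the defining inequalities shows that $w$ is the concatenation-with-overlap of its reversed initial segment (the \emph{left arm}) and its terminal segment (the \emph{right arm}), the two arms sharing the pivot letters $i,c$ in the $V$-cases and the three pivot letters $i,c,i$ (resp.\ $c,i,c$) in the $W$-cases; beyond these shared letters each arm consists of ``extra'' letters that alternate between $L_+$-letters $<c$ that strictly decrease and $L_-$-letters $>i$ that strictly increase, the class of the first extra letter being dictated by the type of $w$.

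Now write $L_-=\{i_1<\cdots<i_n\}$, $L_+=\{c_1<\cdots<c_p\}$, and let $a\in\{0,\dots,n-1\}$, $b\in\{0,\dots,p-1\}$ record that $i=i_{n-a}$ and $c=c_{b+1}$, so $a$ is the number of $L_-$-letters available above $i$ and $b$ the number of $L_+$-letters available below $c$. An admissible arm is then determined by an $l$-element subset of those $a$ available $L_-$-letters (its extra $L_-$-letters) together with a subset of the $b$ available $L_+$-letters (its extra $L_+$-letters) whose cardinality is forced, by the alternation pattern, to be $l$ or $l+1$ when the first extra letter lies in $L_+$ and $l-1$ or $l$ when it lies in $L_-$; by Lemma~\ref{lb1}(2) the number of admissible such $L_+$-sets is therefore $\binom{b+1}{l+1}$, respectively $\binom{b+1}{l}$, and the usual conventions for vanishing binomials absorb all degenerate cases (an arm reduced to its pivot letters, $l=0$, etc.). Since a word of a given type is the same thing as a pair (left arm, right arm) with identical pivot letters, summing $\binom{a}{l}\binom{a}{r}$ times the two $\binom{b+1}{\cdot}$-factors over $a,b,l,r$ yields exactly the sums in (1)--(4): in a $V_\mp$-word the left arm's first extra letter lies in $L_+$ and the right arm's in $L_-$, producing $\binom{b+1}{l+1}\binom{b+1}{r}$; in a $V_\pm$-word they lie in $L_-$ and $L_+$, producing $\binom{b+1}{l}\binom{b+1}{r+1}$; in a $W_+$-word both lie in $L_-$, producing $\binom{b+1}{l}\binom{b+1}{r}$; and in a $W_-$-word both lie in $L_+$, producing $\binom{b+1}{l+1}\binom{b+1}{r+1}$.

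For (5), recall $\K_L$ is the disjoint union of $L^1$ and the four families, so $|\K_L|=(n+p+1)+|\V_\mp|+|\V_\pm|+|\W_+|+|\W_-|$. In each of (1)--(4) the inner double sum factors into two Vandermonde convolutions (Lemma~\ref{lb1}(3)), e.g.\ $\sum_l\binom{a}{l}\binom{b+1}{l+1}=\binom{a+b+1}{b}$ and $\sum_r\binom{a}{r}\binom{b+1}{r}=\binom{a+b+1}{b+1}$; this gives $|\V_\mp|=|\V_\pm|=\sum_{a=0}^{n-1}\sum_{b=0}^{p-1}\binom{a+b+1}{b}\binom{a+b+1}{b+1}$, $|\W_-|=\sum_{a=0}^{n-1}\sum_{b=0}^{p-1}\binom{a+b+1}{b}^{2}$, $|\W_+|=\sum_{a=0}^{n-1}\sum_{b=0}^{p-1}\binom{a+b+1}{b+1}^{2}$, and hence by Lemma~\ref{lb1}(2)
$$|\V_\mp|+|\V_\pm|+|\W_+|+|\W_-|=\sum_{a=0}^{n-1}\sum_{b=0}^{p-1}\Bigl(\binom{a+b+1}{b}+\binom{a+b+1}{b+1}\Bigr)^{2}=\sum_{a=0}^{n-1}\sum_{b=0}^{p-1}\binom{a+b+2}{b+1}^{2}.$$
Changing the indices of summation to $a+1\in\{1,\dots,n\}$ and $b+1\in\{1,\dots,p\}$ turns the last double sum into $\sum_{a=1}^{n}\sum_{b=1}^{p}\binom{a+b}{b}^{2}=\sum_{a=1}^{n}\sum_{b=1}^{p}\binom{a+b}{a}\binom{a+b}{b}$ (Lemma~\ref{lb1}(1)); since the terms of $K(n,p)=\sum_{a=0}^n\sum_{b=0}^p\binom{a+b}{a}\binom{a+b}{b}$ with $a=0$ or $b=0$ contribute precisely $(p+1)+n=n+p+1$, we conclude $|\K_L|=K(n,p)$.

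I expect the only genuinely non-routine step to be the pivot-and-arms decomposition: one must verify that the pivot is well defined and that $w\mapsto(\text{left arm},\text{right arm})$ is a bijection onto the matching arm-pairs. This requires a careful reading of the four systems of defining inequalities (using the already-observed pairwise disjointness of $\V_\mp,\V_\pm,\W_-,\W_+$ to see that the type of $w$ is unambiguous) and careful tracking of the parities of the arm lengths — this parity bookkeeping is exactly what produces the ``$+1$'' shifts in the binomial coefficients of (1)--(4). Everything after that is the elementary manipulation above.
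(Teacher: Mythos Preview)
Your proposal is correct and follows essentially the same approach as the paper's proof: both anchor each word at its extremal ``pivot'' letters (the minimum $L_-$-letter $i$ and maximum $L_+$-letter $c$, with the appropriate multiplicity for the $W$-types), parametrise by $a=|\{x\in L_-:x>i\}|$ and $b=|\{x\in L_+:x<c\}|$, count the left and right ``arms'' independently via $\binom{a}{l},\binom{a}{r}$, and absorb the two parity cases for each arm into a single $\binom{b+1}{\cdot}$ factor using Pascal's identity (Lemma~\ref{lb1}(2)). The only organisational difference is in part (5): the paper first adds the four raw quadruple sums and then applies Pascal and Vandermonde, whereas you apply Vandermonde to each family separately (obtaining the closed forms $\binom{a+b+1}{b}\binom{a+b+1}{b+1}$, $\binom{a+b+1}{b}^2$, $\binom{a+b+1}{b+1}^2$) and then combine via $(\binom{a+b+1}{b}+\binom{a+b+1}{b+1})^2=\binom{a+b+2}{b+1}^2$; the two routes are equivalent and equally short.
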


\begin{proof} 1. To calculate the number of $V_\mp$-words, fix any $V_\mp$-word $v\in V_{\mp}$ and write it as an alternating word $v=x_k\dots x_{2m}\dots x_q$ such that the sequences $(x_{2m+2i})_{0{\le}i{\le}\frac{q-2m}2}$ and $(x_{2m-2i})_{0{\le}i{\le}\frac{2m-k}2}$ are strictly increasing in $L_-$ and the sequences $(x_{2m+1+2i})_{0{\le}i{\le}\frac{q-2m-1}2}$ and $(x_{2m+1-2i})_{0{\le}i\le \frac{2m+1-k}2}$ are strictly decreasing in $L_+$. It follows that the sequence $(x_{2m+2i})_{1{\le}i{\le}\frac{q-2m}2}$ is a strictly increasing sequence of length $r=\lfloor\frac{q-2m}2\rfloor$ in the linearly ordered set $A=\{x\in L:x>x_{2m}\}\subset L_-$. The number of such sequences is equal to $\binom{a}{r}$ where the cardinality $a=|A|$ can vary from $0$ (if $x_{2m}$ is the largest element of the set $L_-$) till $n-1$ (if $x_{2m}$ is the smallest element of $L_-$). By analogy, $(x_{2m-2i})_{1{\le}i{\le}\frac{2m-k}2}$ is a strictly increasing sequence of length $l=\lfloor\frac{2m-k}2\rfloor$ in the linearly ordered set $A$ and the number of such sequences is equal to $\binom{a}{l}$.

If $l=\frac{2m-k}2$, then $2m=k+2l$ and $(x_{2m+1-2i})_{1{\le}i\le \frac{2m+1-k}2}$ is a strictly decreasing sequence of length $\lfloor\frac{2m+1-k}2\rfloor= l$ in the linearly ordered set $B=\{x\in L:x<x_{2m+1}\}\subset L_+$. The number of such sequences is equal to $\binom{b}{l}$ where $b=|B|<p$. If $l=\frac{2m-k-1}2$, then $2m=k+1+2l$ and $(x_{2m+1-2i})_{1{\le}i\le \frac{2m+1-k}2}$ is a strictly increasing sequence of length $\lfloor\frac{2m+1-k}2\rfloor=l+1$ in the set $B$. The number of such sequences is equal to $\binom{b}{l+1}$.

If $r=\frac{q-2m}2$, then $2m=q-2r$ and $(x_{2m+1+2i})_{1{\le}i{\le}\frac{q-2m-1}2}$ is a strictly decreasing sequence of length $\lfloor\frac{q-2m-1}2\rfloor= r-1$ in the linearly ordered set $B$. The number of such sequences is equal to $\binom{b}{r-1}$. If $r=\frac{q-2m-1}2$, then $2m=q-1-2r$ and $(x_{2m+1+2i})_{1{\le}i{\le}\frac{q-2m-1}2}$ is a strictly decreasing sequences of length $\lfloor \frac{q-2m-1}2\rfloor=r$ in the linearly ordered set $B$. The number of such sequences is equal to $\binom{b}{r}$. Summing up and applying Lemma~\ref{lb1}(2), we conclude that the family $\V_\mp$ of all $V_\mp$-words has cardinality
$$
\begin{aligned}
|\V_\mp|&= \sum_{a=0}^{n-1}\sum_{l=0}^{a}\sum_{r=0}^{a}\sum_{b=0}^{p-1}{\textstyle\binom{a}{l}\binom{a}{r}\Big(
    \binom{b}{l}\binom{b}{r}+\binom{b}{l+1}\binom{b}{r}+\binom{b}{l}\binom{b}{r-1}+\binom{b}{l+1}
    \binom{b}{r-1}\Big)}=\\
&=\sum_{a=0}^{n-1}\sum_{b=0}^{p-1}\sum_{l=0}^{a}\sum_{r=0}^{a}{\textstyle\binom{a}{l}\binom{a}{r}
\Big(\binom{b+1}{l+1}\binom{b}{r}+\binom{b+1}{l+1}\binom{b}{r-1}\Big)}=
\sum_{a=0}^{n-1}\sum_{b=0}^{p-1}\sum_{l=0}^{a}\sum_{r=0}^{a}{\textstyle\binom{a}{l}\binom{a}{r}
\binom{b+1}{l+1}\binom{b+1}{r}}.
\end{aligned}
$$
\smallskip

2. By analogy we can prove that
$$|\V_\pm|= \sum_{a=0}^{n-1}\sum_{b=0}^{p-1}\sum_{l=0}^{a}\sum_{r=0}^{a}\textstyle\binom{a}{l}\binom{a}{r}
    \binom{b+1}{l}\binom{b+1}{r+1}.$$
\smallskip

3. To calculate the number of $W_+$-words, fix any $W_+$-word $w\in \W_{+}$ and write it as an alternating word $w=x_k\dots x_{2m}\dots x_q$ such that $k<2m<q$,  $x_{2m}\in L_-$, $x_{2m-1}=x_{2m+1}\in L_+$, the sequences $(x_{2m+2i})_{0{\le}i{\le}\frac{q-2m}2}$ and
$(x_{2m-2i})_{0{\le}i{\le}\frac{2m-k}2}$ are strictly increasing in $L_-$ whereas the sequences
$(x_{2m+1+2i})_{0{\le}i{\le}\frac{q-2m-1}2}$ and\break $(x_{2m-1-2i})_{0{\le}i{\le}\frac{2m-k-1}2}$ are strictly decreasing in $L_+$.

 It follows that $(x_{2m-2i})_{1{\le}i{\le}\frac{2m-k}2}$ is a strictly increasing sequence of length $l=\lfloor\frac{2m-k}2\rfloor$ in the linearly ordered set $A=\{x\in L:x>x_{2m}\}\subset L_-$. The number of such sequences is equal to $\binom{a}{l}$ where  $a=|A|<n$. By analogy, $(x_{2m+2i})_{0{\le}i{\le}\frac{q-2m}2}$ is a strictly increasing sequence of length $r=\lfloor \frac{q-2m}2\rfloor$ in the linearly ordered set $A$ and the number of such sequences is equal to $\binom{a}{r}$.

If $l=\frac{2m-k}2$, then $2m=2l+k$ and $(x_{2m-1-2i})_{1{\le}i{\le}\frac{2m-k-1}2}$ is a strictly decreasing sequence of length $\lfloor \frac{2m-k-1}2\rfloor =\lfloor\frac{2l-1}2\rfloor=l-1$ in the linearly ordered set $B=\{x\in L:x<x_{2m-1}=x_{2m+1}\}\subset L_+$. The number of such sequences is equal to $\binom{b}{l-1}$ where $b=|B|<p$. If $l=\frac{2m-k-1}2$, then $(x_{2m-1-2i})_{1{\le}i{\le}\frac{2m-k-1}2}$ is a strictly decreasing sequence of length $\lfloor \frac{2m-k-1}2\rfloor=l$ in the set $B$. The number of such sequences is equal to $\binom{b}{l}$.

If $r=\frac{q-2m}2$, then $2m=q-2r$ and
 $(x_{2m+1+2i})_{0{\le}i{\le}\frac{q-2m-1}2}$ is a strictly decreasing sequence of length $\lfloor\frac{q-2m-1}2\rfloor=r-1$ in the linearly ordered set $B$. The number of such sequences is equal to $\binom{b}{r-1}$. If $r=\frac{q-2m-1}2$, then $2m=q-1-2r$ and
  $(x_{2m+1+2i})_{1{\le}i{\le}\frac{q-2m-1}2}$ is a strictly decreasing sequence of length $\lfloor\frac{q-2m-1}2\rfloor=r$ in the linearly ordered set $B$.
The number of such sequences is equal to $\binom{b}{r}$. Summing up, we conclude that the family $\W_+$ of all $W_+$-words has cardinality
$$
\begin{aligned}
|\W_+|&= \sum_{a=0}^{n-1}\sum_{l=0}^{a}\sum\limits_{r=0}^{a}\sum_{b=0}^{p-1}{\textstyle\binom{a}{l}\binom{a}{r}\Big(
    \binom{b}{l}\binom{b}{r}+\binom{b}{l}\binom{b}{r-1}+\binom{b}{l-1}\binom{b}{r}+\binom{b}{l-1}
    \binom{b}{r-1}\Big)}=\\
&=\sum_{a=0}^{n-1}\sum_{b=0}^{p-1}\sum_{l=0}^{a}\sum\limits_{r=0}^{a}
{\textstyle \binom{a}{l}\binom{a}{r}\Big(\binom{b}{l}\binom{b+1}{r}+\binom{b}{l-1}\binom{b+1}{r}\Big)}=
    \sum_{a=0}^{n-1}\sum_{b=0}^{p-1}\sum_{l=0}^{a}\sum_{r=0}^{a}\textstyle{\binom{a}{l}\binom{a}{r}
    \binom{b+1}{l}\binom{b+1}{r}}.
\end{aligned}
$$
\smallskip

4. By analogy we can prove that
$$|\W_-|= \sum_{a=0}^{n-1}\sum_{b=0}^{p-1}\sum_{l=0}^{a}\sum_{r=0}^{a}\textstyle\binom{a}{l}\binom{a}{r}
    \binom{b+1}{l+1}\binom{b+1}{r+1}.$$
\smallskip

5. By the preceding items
$$
\begin{aligned}
|\K_L|&=|L|+|\V_\mp|+|\V_\pm|+|\W_-|+|\W_+|=1+n+p+|\V_\mp|+|\W_-|+|\V_\pm|+|\W_+|=\\
&= 1+n+p+\sum_{a=0}^{n-1}\sum_{b=0}^{p-1}\sum_{l=0}^{a}\sum_{r=0}^{a}{\textstyle\binom{a}{l}\binom{a}{r} \Big(\binom{b+1}{l+1}\binom{b+1}{r}+\binom{b+1}{l}\binom{b+1}{r}+\binom{b+1}{l}\binom{b+1}{r+1}+
\binom{b+1}{l+1}\binom{b+1}{r+1}\Big)}=\\
&=1+n+p+\sum_{a=0}^{n-1}\sum_{b=0}^{p-1}\sum_{l=0}^{a}\sum_{r=0}^{a}{\textstyle \binom{a}{l}\binom{a}{r} \Big(\binom{b+2}{l+1}\binom{b+1}{r}+\binom{b+2}{l+1}\binom{b+1}{r+1}\Big)}=\\
&=1+n+p+\sum_{a=0}^{n-1}\sum_{b=0}^{p-1}\sum_{l=0}^{a}\sum_{r=0}^{a}{\textstyle \binom{a}{l}\binom{a}{r} \binom{b+2}{l+1}\binom{b+2}{r+1}}=1+n+p+\sum_{a=0}^{n-1}\sum_{b=0}^{p-1}\Big(\sum_{l=0}^{a}
{\textstyle\binom{a}{l}\binom{b+2}{l+1}}\Big)^2=\\
&=1+n+p+\sum_{a=0}^{n-1}\sum_{b=0}^{p-1}\Big(\sum_{l=0}^{a}{\textstyle\binom{a}{l}\binom{b+2}{b+1-l}}\Big)^2=1+n+p+
\sum_{a=0}^{n-1}\sum_{b=0}^{p-1}{\textstyle\binom{a+b+2}{b+1}}^2=\\
&=1+n+p+\sum_{a=1}^{n}\sum_{b=1}^{p}
{\textstyle\binom{a+b}{b}}^2
=\sum_{a=0}^{n}\sum_{b=0}^{p}\textstyle{\binom{a+b}{b}\binom{a+b}{a}}=K(n,p).
\end{aligned}
$$
\end{proof}

\section{An asymptotics of the sequence $K(n)$}\label{s:asymp}

In this section we study the asymptotical growth of the sequence $K(n)=K(n,n)$ and prove Theorem~\ref{asymp} announced in the Introduction
as a corollary of the following results.

For every integers $0\le a,b\le n$ put
$$c_{a,b}(n)=\frac{\binom {2n-a-b}{n-a}}{\binom {2n}n}=\frac {n(n-1)\cdots (n-a+1)\cdot n(n-1)\cdots (n-b+1)}{2n(2n-1)\cdots (2n-a-b+1)}$$ and observe that
\begin{equation}\label{eq:my}
\lim_{n\to\infty} c_{a,b}(n)=2^{-(a+b)}.
\end{equation}

For every $n\ge 0$ put $k(n)=K(n)/\binom{2n}{n}^2$ and observe that
$$k(n)=\frac{K(n)}{\binom {2n}n^2}=\sum_{i,j=0}^n\frac{\binom{i+j}{i}^2}{\binom{2n}{n}^2}=
\sum_{a,b=0}^n\frac{\binom{n-a+n-b}{n-a}^2}{\binom{2n}{n}^2}=\sum_{a,b=0}^nc_{a,b}(n)^2.$$

\begin{proposition}\label{ravp1} For every $n\ge0$ we have $k(n)\le \frac{16}{9}$.
\end{proposition}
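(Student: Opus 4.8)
\emph{Proof proposal.} The plan is to prove $k(n)\le\frac{16}{9}$ by induction on $n$, based on a multiplicative recursion for the numbers $c_{a,b}(n)$. The product formula for $c_{a,b}(n)$ is symmetric in $a$ and $b$ (so $c_{a,0}(n)=c_{0,a}(n)$) and yields at once the identity
$$c_{a,b}(n)=c_{1,1}(n)\cdot c_{a-1,b-1}(n-1)\qquad(a,b\ge1),$$
because $c_{1,1}(n)=\frac{n\cdot n}{2n(2n-1)}$ and multiplying it by $c_{a-1,b-1}(n-1)$ reproduces exactly the numerator $n(n-1)\cdots(n-a+1)\cdot n(n-1)\cdots(n-b+1)$ and the denominator $2n(2n-1)\cdots(2n-a-b+1)$ of $c_{a,b}(n)$. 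Splitting $k(n)=\sum_{a,b=0}^n c_{a,b}(n)^2$ according to whether $a$ or $b$ equals $0$, and applying this identity on the block where $a,b\ge1$, one obtains the recursion
$$k(n)=c_{1,1}(n)^2\,k(n-1)+2g(n)-1\quad(n\ge1),\qquad k(0)=1,\qquad\text{where }\ g(n):=\sum_{b=0}^n c_{0,b}(n)^2 .$$

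For the inductive step, assume $k(n-1)\le\frac{16}{9}$; then $k(n)\le\frac{16}{9}c_{1,1}(n)^2+2g(n)-1$, so $k(n)\le\frac{16}{9}$ will follow as soon as $g(n)\le\frac{25}{18}-\frac89\,c_{1,1}(n)^2$. To estimate $g(n)$, note that $c_{0,0}(n)=1$, $c_{0,1}(n)=\frac12$, and for $b\ge2$
$$c_{0,b}(n)=2^{-b}\prod_{j=1}^{b-1}\frac{2(n-j)}{2n-j}\le 2^{-b}\cdot\frac{2n-2}{2n-1}\le 2^{-b}\Bigl(1-\frac1{2n}\Bigr),$$
so summing the geometric tail gives $g(n)\le\frac54+\frac1{12}\bigl(1-\frac1{2n}\bigr)^2$. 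Since $c_{1,1}(n)=\frac{n}{2(2n-1)}$, a direct check gives $\bigl(1-\frac1{2n}\bigr)^2=\frac1{16\,c_{1,1}(n)^2}$, hence $g(n)\le\frac54+\frac1{192\,c_{1,1}(n)^2}$. Writing $x=c_{1,1}(n)^2$, the required inequality $g(n)\le\frac{25}{18}-\frac89 x$ then follows from $\frac54+\frac1{192x}\le\frac{25}{18}-\frac89 x$, which is equivalent to $512x^2-80x+3\le0$, i.e. to $\frac1{16}\le x\le\frac3{32}$. Now $c_{1,1}(n)^2=\frac{n^2}{4(2n-1)^2}\ge\frac1{16}$ for every $n\ge1$, and $c_{1,1}(n)^2\le\frac3{32}$ iff $4n^2-12n+3\ge0$ iff $n\ge3$; so the inductive step goes through for all $n\ge3$, giving $k(n)\le\frac{16}{9}c_{1,1}(n)^2+2\bigl(\frac{25}{18}-\frac89 c_{1,1}(n)^2\bigr)-1=\frac{16}{9}$.

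It remains to handle the base cases $n=0,1,2$ directly: $k(0)=1$, and the recursion gives $k(1)=\frac14\cdot1+2\cdot\frac54-1=\frac74$ and $k(2)=\frac19\cdot\frac74+2\cdot\frac{23}{18}-1=\frac{63}{36}=\frac74$ (using $g(1)=\frac54$ and $g(2)=\frac{23}{18}$), all of which are $\le\frac{16}{9}$. This completes the induction.

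The main difficulty is that the pivotal inequality $g(n)\le\frac{25}{18}-\frac89 c_{1,1}(n)^2$ is asymptotically sharp: both sides tend to $\frac43$ as $n\to\infty$ (recall $c_{1,1}(n)^2\downarrow\frac1{16}$), so the convenient bound $g(n)\le\frac43$ is not enough and one genuinely needs the two‑term estimate for $g(n)$ with the correct $O(1/n)$ constant. This is precisely what produces the threshold $n\ge3$ and, correspondingly, the small cases $n=0,1,2$ that must be verified separately.
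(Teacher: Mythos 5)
Your proof is correct and follows essentially the same route as the paper's: the same decomposition of $k(n)$ into the boundary terms $c_{0,b}(n)^2$ and the inner block, the same key identity $c_{a,b}(n)=c_{1,1}(n)\,c_{a-1,b-1}(n-1)$ giving $\sum_{a,b\ge1}c_{a,b}(n)^2=c_{1,1}(n)^2k(n-1)$, and an induction closed by a geometric-series bound on the boundary terms. Your only (correct) refinement is the sharper tail estimate $c_{0,b}(n)\le 2^{-b}\bigl(1-\tfrac1{2n}\bigr)$, which lets the inductive step start at $n\ge3$ instead of the paper's $n\ge5$ and so requires fewer base cases.
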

\begin{proof} $k(0)=1$, $k(1)=k(2)=\frac{7}{4}<\frac{16}{9}$, $k(3)=\frac{697}{400}<\frac{7}{4}$, and $k(4)=\frac{8549}{4900}<\frac{16}{9}$. Suppose now that for some $n\ge 5$ we have proved that $k(n-1)\le \frac{16}{9}$.

We shall use the following two lemmas.

\begin{lemma} For each $0<a\le n$ we have $c_{a,0}(n)<2^{-a}$.
\end{lemma}
\begin{proof} This lemma follows from the equality $c_{a,0}(n)=\frac {n(n-1)\cdots (n-a+1)}{2n(2n-1)\cdots (2n-a+1)}$ and the inequality $(2n-l)> 2(n-l)$ holding for all $0<l\le n$.
\end{proof}

\begin{lemma}\label{l:6.3} $\frac{16}{9}c_{1,1}(n)^2+2\big(c_{1,0}(n)^2+c_{2,0}(n)^2\big)< \frac{1}{9}+2\big(\frac1{4}+\frac1{4^2}\big)$ for $n\ge 5$.
\end{lemma}
\begin{proof} Observe that $c_{1,1}(n)=\frac n{2(2n-1)}$, $c_{1,0}(n)=1/2$, and $c_{2,0}(n)=\frac {n-1}{2(2n-1)}$. Routine transformations show that the inequality in the lemma are equivalent to $19< 4n$, which holds for $n\ge 5$.
\end{proof}

Now we have that
$$
\begin{aligned}
k(n)&=c_{0,0}(n)^2+2\sum_{a=1}^nc_{a,0}(n)^2+\sum_{a,b=1}^nc_{a,b}(n)^2=1+2\sum_{a=1}^nc_{a,0}(n)^2+\sum_{a,b=0}^{n-1}c_{a+1,b+1}(n)^2=\\
&=1+2\sum_{a=1}^nc_{a,0}(n)^2+\sum_{a,b=0}^{n-1}\tfrac{n^4}{(2n(2n-1))^2}c_{a,b}(n-1)^2=\\
&=1+2\sum_{a=3}^n c_{a,0}(n)^2+2\sum_{a=1}^2c_{a,0}(n)^2+c_{1,1}(n)^2k(n-1)<\\
&<1+2\sum_{a=3}^n\frac1{4^a}+2(c_{1,0}(n)^2+c_{2,0}(n)^2)+c_{1,1}(n)^2\cdot\frac{16}{9}<\\
&<1+2\sum_{a=3}^\infty\frac1{4^a}+2\Big(\frac14+\frac1{4^2}\Big)+\frac{1}{9}=\frac{16}9
\end{aligned}
$$
according to Lemma~\ref{l:6.3}.

\end{proof}

\begin{proposition}\label{ravp2} There exists a limit $\lim_{n\to\infty} k(n)=16/9$.
\end{proposition}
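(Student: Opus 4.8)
The plan is to establish $\lim_{n\to\infty}k(n)=16/9$ by combining the uniform upper bound $k(n)\le 16/9$ from Proposition~\ref{ravp1} with a matching lower bound $\liminf_{n\to\infty}k(n)\ge 16/9$. Since $k(n)=\sum_{a,b=0}^n c_{a,b}(n)^2$ is a finite sum of non-negative terms, and by \eqref{eq:my} each term satisfies $c_{a,b}(n)^2\to 2^{-2(a+b)}=4^{-(a+b)}$ as $n\to\infty$, the natural route is Fatou's lemma (for sums): for any fixed $N$,
$$\liminf_{n\to\infty}k(n)\ge \liminf_{n\to\infty}\sum_{a,b=0}^N c_{a,b}(n)^2=\sum_{a,b=0}^N 4^{-(a+b)},$$
using that for $n\ge N$ the partial sum over $0\le a,b\le N$ is dominated by $k(n)$ and that a finite sum commutes with the limit. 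Letting $N\to\infty$ gives $\liminf_{n\to\infty}k(n)\ge\sum_{a,b=0}^\infty 4^{-(a+b)}=\big(\sum_{a=0}^\infty 4^{-a}\big)^2=\big(\tfrac43\big)^2=\tfrac{16}9$.

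First I would record the pointwise limit \eqref{eq:my}, which is already in the excerpt, and note it yields $\lim_{n\to\infty}c_{a,b}(n)^2=4^{-(a+b)}$ for each fixed pair $(a,b)$. Next I would fix $N\in\w$ and observe that for all $n\ge N$ the truncation inequality $k(n)\ge\sum_{a,b=0}^N c_{a,b}(n)^2$ holds, because all summands of $k(n)$ are non-negative. Taking $\liminf$ as $n\to\infty$ and using that the right-hand side is a fixed finite sum whose limit is the sum of the limits, I get $\liminf_{n\to\infty}k(n)\ge\sum_{a,b=0}^N 4^{-(a+b)}$. Since this holds for every $N$, I may pass to the supremum over $N$, obtaining $\liminf_{n\to\infty}k(n)\ge\sum_{a,b=0}^\infty 4^{-(a+b)}=\tfrac{16}9$.

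Finally, combining with Proposition~\ref{ravp1}, which gives $k(n)\le\tfrac{16}9$ for all $n$ and hence $\limsup_{n\to\infty}k(n)\le\tfrac{16}9$, I conclude $\limsup_{n\to\infty}k(n)\le\tfrac{16}9\le\liminf_{n\to\infty}k(n)$, so the limit exists and equals $\tfrac{16}9$.

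The proof is genuinely short, so I do not expect a serious obstacle; the only point requiring any care is justifying the interchange of $\liminf$ and the (finite) sum in the truncation step — but this is trivial since finite sums always commute with limits — together with the monotone passage $N\to\infty$ afterward. An alternative to the Fatou-type argument, should one prefer an elementary self-contained estimate, would be to bound the tail $\sum_{a+b>N}c_{a,b}(n)^2$ uniformly in $n$ using the inequality $c_{a,b}(n)\le 2^{-(a+b)}$ valid for $n$ large relative to $a+b$ (an analogue of the lemma $c_{a,0}(n)<2^{-a}$ proved in Proposition~\ref{ravp1}), which would let one control the error term directly; but the Fatou argument above is cleaner and suffices. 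Theorem~\ref{asymp} then follows by substituting the Stirling asymptotics $\binom{2n}{n}^2=\big(1+o(1)\big)\tfrac{4^{2n}}{\pi n}$ into $K(n)=k(n)\binom{2n}{n}^2$.
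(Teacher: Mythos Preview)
Your proposal is correct and follows essentially the same approach as the paper: the paper's proof also uses \eqref{eq:my} (via a Fatou-type argument) to obtain $\liminf_{n\to\infty}k(n)\ge\sum_{a,b=0}^\infty 2^{-2(a+b)}=16/9$, and then invokes Proposition~\ref{ravp1} for the matching upper bound. Your write-up is simply more explicit about the truncation step; the paper compresses this into one line.
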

\begin{proof} The equality (\ref{eq:my}) implies that $\underline\lim_{n\to\infty} k(n)\ge \sum_{a,b=0}^\infty 2^{-2(a+b)}
=\sum_{i=0}^\infty (i+1)4^{-i}=16/9$. By Proposition~\ref{ravp1}, $\lim_{n\to\infty} k(n)=16/9$.
\end{proof}

By Stirling's approximation, $\lim_{n\to\infty}\frac {n!}{\sqrt{2\pi n}}(n/e)^n=1$, which yields
the second equality in Theorem~\ref{asymp}.

\section{Representing elements of Kuratowski monoids by Kuratowski words}\label{pf:kur}

Let $K$ be a Kuratowski monoid with linear generating set $L$ and let $L_-=\{x\in L:x<1\}$ and $L_+=\{x\in L:x>1\}$ be the negative and positive parts of $L$, respectively. Let $FS_L=\bigcup_{n=1}^\infty L^n$ be the free semigroup over $L$ and $\pi:FS_L\to K$ be the homomorphism assigning to each word $x_1\dots x_n\in FS_L$ the product $x_1\cdots x_n$ of its letters in $K$. The homomorphism $\pi:FS_L\to K$ induces a congruence $\sim$ on $FS_L$ which identifies two words $u,v\in FS_L$ iff $\pi(u)=\pi(v)$.

A word $w\in FS_L$ is called {\em irreducible} if $w$ has the smallest possible length in its equivalence class $[w]_\sim=\{u\in FS_L:u\sim w\}$. Since the set of natural numbers is well-ordered, for each element $x\in K$ there is an irreducible word $w\in FS_L$ such that $x=\pi(w)$. Consequently, the cardinality of $K$ does not exceed the cardinality of the set of irreducible words in $FS_L$.

\begin{theorem}\label{represent} Each irreducible word in $FS_L$ is a Kuratowski word. Consequently, $\pi(\K_L)=K$ and $|K|\le|\K_L|$. If the set $L$ is finite, then $|K|\le|\K_L|=K(n,p)$ where $n=|L_-|$ and $p=|L_+|$.
\end{theorem}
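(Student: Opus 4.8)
The plan is to show that every irreducible word in $FS_L$ is a Kuratowski word, which by the definition of irreducibility and Theorem~\ref{kuratwords} immediately gives $|K|\le|\K_L|=K(n,p)$. The proof proceeds by analysing the structure of irreducible words using the two basic algebraic facts about the generators: each letter of $L$ is an idempotent (so no irreducible word contains two equal adjacent letters), and the letters form a linearly ordered set with unit $1$, so for comparable idempotents $x\le y$ in a partially ordered monoid with $x,y$ both $\le 1$ or both $\ge 1$ we have absorption identities such as $xy=x=yx$ when $x,y\in L_-$ (since $x\le y\le 1$ forces $x=x\cdot 1\le xy$ and $xy\le 1\cdot y=y$, $xy\le x\cdot 1=x$... one must be careful, but the key point is that among negative generators the product collapses to the minimum, and dually among positive generators to the maximum). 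Consequently the unit $1$ never appears in an irreducible word of length $\ge 2$, and an irreducible word of length $\ge 2$ must be \emph{alternating}: two consecutive letters from the same part ($L_-$ or $L_+$) can be replaced by their min (resp.\ max), shortening the word.

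Next I would establish the finer ``monotonicity'' constraints. The classical Kuratowski identity $\itau\ctau\itau\ctau=\itau\ctau$ (equivalently $\ctau\itau\ctau\itau=\ctau\itau$) for a single topology must be replaced here by identities governing how indices can repeat or move as one reads an alternating word from a ``turning point'' outward. The heart of the matter is a collection of rewriting rules saying: if in an alternating word the subsequence of letters from $L_-$ (read in one direction away from some position) is \emph{not} strictly monotone in the required way, or the $L_+$-subsequence is not, then some length-$4$ or length-$3$ subword can be shortened using idempotency together with the single-topology Kuratowski relations and the absorption identities between different indices. Carrying this out, one shows that an irreducible alternating word has a distinguished position $m$ (the innermost ``extremum'') from which, going in each of the two directions, the negative letters form a strictly monotone sequence and the positive letters form a strictly monotone sequence, with the directions of monotonicity tied together exactly as in the four cases $V_\mp$, $V_\pm$, $W_-$, $W_+$; the $W$ cases arise precisely when the central triple has the shape $x\,y\,x$ with the two outer letters equal. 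This is the combinatorial skeleton of the definition of $\K_L$, so the irreducible word is a Kuratowski word.

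The main obstacle I expect is the bookkeeping in the rewriting step: one must verify that \emph{every} forbidden local pattern in an alternating word (wrong direction of monotonicity, a repeat that is not the central $xyx$, a ``valley'' on the negative side clashing with a ``peak'' on the positive side, etc.) is genuinely reducible, and that the reductions are confluent enough that no irreducible word escapes the classification. Concretely one needs a finite list of length-$\le 4$ relations --- essentially the idempotent law $xx=x$, the absorption laws $xy=x=yx$ for $x<y$ in $L_-$ and dually in $L_+$, and the mixed relations like $\alpha\beta\alpha'\beta'=\alpha\beta'$ when $\alpha\le\alpha'$ in $L_-$ and $\beta'\le\beta$ in $L_+$ (the genuine generalization of $\itau\ctau\itau\ctau=\itau\ctau$) --- and one must check that these relations hold in every Kuratowski monoid (they do, since they hold for $\itau,\ctau$ by elementary point-set topology and the only structure used is the partial order and idempotency) and that repeatedly applying them to any non-Kuratowski word strictly decreases the length.

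Once this structural analysis is complete, the counting is already done: Theorem~\ref{kuratwords}(5) gives $|\K_L|=K(n,p)$, and since $\pi$ maps $\K_L$ onto $K$ (every element of $K$ is represented by some irreducible word, hence by a Kuratowski word) we get $|K|\le|\K_L|=K(n,p)$, which is Theorem~\ref{main(n,p)}, and with $p=n$ (or $p=n=|\Tau|-1$ in the degenerate discrete case, where $K(n-1,n-1)\le K(n,n)$) this yields Theorem~\ref{main}.
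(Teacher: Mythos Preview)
Your outline follows essentially the same route as the paper: first prove the absorption laws (product of two negatives is the minimum, product of two positives is the maximum), conclude that irreducible words of length $\ge 2$ are alternating, then use a length-$4$ reduction lemma to force the monotonicity of the negative and positive subsequences outward from a central extremum, and finally sort the result into the four shapes $V_\mp,V_\pm,W_-,W_+$. That is exactly the architecture of the paper's proof (its Lemmas~7.2--7.9 followed by a case analysis).

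There is, however, one concrete error in your key rewriting rule. You state the mixed relation as
\[
\alpha\beta\alpha'\beta'=\alpha\beta'\quad\text{when }\alpha\le\alpha'\text{ in }L_-\text{ and }\beta'\le\beta\text{ in }L_+,
\]
but the inequality on the positive side is reversed. The relation that actually holds (and that the paper proves as its Lemma~7.5) requires $\beta\le\beta'$: from $\alpha\le\alpha'$ one gets $\alpha\alpha'=\alpha$, from $\beta\le\beta'$ one gets $\beta\beta'=\beta'$, and then $\alpha\beta\alpha'\beta'\le\alpha\beta\beta'=\alpha\beta'$ and $\alpha\beta\alpha'\beta'\ge\alpha\alpha'\beta'=\alpha\beta'$. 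With your stated hypothesis $\beta'\le\beta$ the identity is simply false: for two comparable topologies $\tau_0\subset\tau_1$ the word $\itau_0\ctau_0\itau_1\ctau_1$ (which fits your pattern with $\alpha=\itau_0\le\itau_1=\alpha'$ and $\beta'=\ctau_1\le\ctau_0=\beta$) is a genuine Kuratowski word distinct from $\itau_0\ctau_1$, as one sees already in the $63$-element list for $n=2$. So this is precisely the ``bookkeeping'' you flagged as delicate, and it must be fixed for the monotonicity lemmas to go through; once the inequality is corrected, your plan matches the paper's argument.
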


\begin{proof} We divide the proof of Theorem~\ref{represent} into a series of lemmas.

\begin{lemma}\label{l1} For any elements $x,y\in L_-\cup\{1\}$ we get $xy=\min\{x,y\}$.
\end{lemma}

\begin{proof}
Since $L$ is linearly ordered, either $x\le y$ or $y\le x$.

If $x\le y$, then multiplying this inequality by $x$, we get $x=xx\le xy$. On the other hand, multiplying the inequality $y\le 1$ by $x$, we get the reverse inequality $xy\le x1=x$. Taking into account that $x\le xy\le x$, we conclude that $x=xy$.

If $y\le x$, then multiplying this inequality by $y$, we get $y=yy\le xy$. On the other hand, multiplying the inequality $x\le 1$ by $y$, we get $xy\le 1y=y$. Taking into account that $y\le xy\le y$, we conclude that $xy=y=\min\{x,y\}$.
\end{proof}

By analogy we can prove:

\begin{lemma}\label{l2} For any elements $x,y\in L_+\cup\{1\}$ we get $xy=\max\{x,y\}$.
\end{lemma}

\begin{proof} If $x\le y$, then multiplying this inequality by $y$, we obtain  $xy\le yy=y$. On the other hand, multiplying the inequality $1\le x$ by $y$, we get $y=1y\le xy$. So, $xy=y=\max\{x,y\}$.

If $y\le x$, then after multiplication by $x$, we obtain $xy\le xx=x$. On the other hand, multiplying the inequality $1\le y$ by $x$, we get $x\le xy$ and hence $xy=x=\max\{x,y\}$.
\end{proof}

Recall that a word $x_1\dots x_n\in FS_L$ is {\em alternating} if for each natural number $i$ with $1\le i<n$ the doubleton $\{x_i,x_{i+1}\}$ intersects both sets $L_{_-}$ and $L_+$. According to this definition, one-letter words also are alternating. Lemmas~\ref{l1} and \ref{l2} imply:

\begin{lemma}\label{l3} Each irreducible word $w\in FS_L$ is alternating. \end{lemma}

The following lemma will help us to reduce certain alternating words of length 4.

\begin{lemma}\label{l4} If $x_1x_2x_3x_4\in FS_L$ is an alternating word in the alphabet $L$ such that $x_1x_3=x_1$ and $x_2x_4=x_4$ in $K$, then $x_1x_2x_3x_4=x_1x_4$ in $K$ and hence $x_1x_2x_3x_4\sim x_1x_4$.
\end{lemma}

\begin{proof} Two cases are possible.
\smallskip

1) $x_1,x_3\in L_-$ and $x_2,x_4\in L_+$. In this case the equalities $x_1x_3=x_1$ and $x_2x_4=x_4$ imply that $x_1\le x_3$ and $x_2\le x_4$ (see Lemmas~\ref{l1} and \ref{l2}). To see that $x_1x_2x_3x_4= x_1x_4$, observe that
$$x_1x_2x_3x_4\le x_1x_2\cdot 1\cdot x_4=x_1x_2x_4=x_1x_4.$$
On the other hand,
$$x_1x_4=(x_1x_3)x_4=x_1\cdot 1\cdot x_3x_4\le x_1x_2x_3x_4.$$These two inequalities imply the desired equality $x_1x_2x_3x_4=x_1x_4$.
\smallskip

2) $x_1,x_3\in L_+$ and $x_2,x_4\in L_-$.  In this case the equalities $x_1x_3=x_1$ and $x_2x_4=x_4$ imply that $x_1\ge x_3$ and $x_2\ge x_4$ (see Lemmas~\ref{l1} and \ref{l2}). To see that $x_1x_2x_3x_4=x_1x_4$, observe that
$$x_1x_2x_3x_4\le x_1\cdot 1\cdot x_3x_4=x_1x_3x_4=x_1x_4.$$
On the other hand,
$$x_1x_4=x_1(x_2x_4)=x_1x_2\cdot 1\cdot x_4\le x_1x_2x_3x_4.$$
These two inequalities imply the desired equality $x_1x_2x_3x_4=x_1x_4$.
\end{proof}


Now we are able to prove that each irreducible word $w\in FS_L$ is a Kuratowski word.
If $w$ consists of a single letter, then it is trivially Kuratowski and we are done. So, we assume that $w$ has length $\ge 2$. By Lemma~\ref{l3} the word $w$ is alternating and hence can be written as the product $w=x_k\cdots x_n$ for some $k\in\{0,1\}$ and $n>k$ such that $x_{2i}\in L_-$ for all integer numbers $i$ with $k\le 2i\le n$, and $x_{2i-1}\in L_+$ for all integer numbers $i$ with $k\le 2i-1\le n$.

Let $m$ be the smallest number such that $k\le 2m\le n$ and $x_{2m}=\min\{x_{2i}:k\le 2i\le n\}$ in $L_-$. First we shall analyze the structure of the subword $x_k\cdots x_{2m}$ of the word $w=x_k\dots x_n$.

\begin{lemma}\label{l5} The sequence $(x_{2m-2i})_{0{\le}i{\le}\frac{2m-k}2}$ is strictly increasing in $L_-$.
\end{lemma}

\begin{proof} Assume conversely that $x_{2i}\ge x_{2i-2}$ for some number $i\le m$ and assume that $i$ is the largest possible number with this property. The definition of the number $m$ guarantees that $i<m$. Consequently, $x_{2i-2}\le x_{2i}$ and $x_{2i}>x_{2i+2}$. Taking into account that $x_{2i-2},x_{2i},x_{2i+2}\in L_-$ and applying Lemma~\ref{l1}, we get  $x_{2i-2}x_{2i}=\min\{x_{2i-2},x_{2i}\}=x_{2i-2}$ and $x_{2i}x_{2i+2}=\min\{x_{2i},x_{2i+2}\}=x_{2i+2}$.

Now consider the elements $x_{2i-1},x_{2i+1}\in L_{+}$. If $x_{2i-1}\le x_{2i+1}$, then $x_{2i-1}x_{2i+1}=x_{2i+1}$ by Lemma~\ref{l2} and by Lemma~\ref{l4} the alternating word $x_{2i-2}x_{2i-1}x_{2i}x_{2i+1}=x_{2i-2}x_{2i+1}$ is reducible as $x_{2i-2}x_{2i}=x_{2i-2}$ and $x_{2i-1}x_{2i+1}=x_{2i+1}$.

If $x_{2i-1}>x_{2i+1}$, then $x_{2i-1}x_{2i+1}=x_{2i-1}$ by Lemma~\ref{l2}. Lemma~\ref{l4} guarantees that the alternating word $x_{2i-1}x_{2i}x_{2i+1}x_{2i+2}=x_{2i-1}x_{2i+2}$ is reducible as $x_{2i-1}x_{2i+1}=x_{2i-1}$ and $x_{2i}x_{2i+2}=x_{2i+2}$.

Therefore, the word $x_k\cdots x_n$ contains a reducible subword and hence is reducible, which contradicts the choice of this word. This contradiction shows that $x_{2i-2}<x_{2i}$ and hence $x_{2i-2}x_{2i}=\min\{x_{2i-2},x_{2i}\}=x_{2i-2}\ne x_{2i}$ according to Lemma~\ref{l1}.
\end{proof}

\begin{lemma}\label{l6} The sequence $(x_{2m-1-2i})_{0{\le}i{\le}\frac{2m{-}k{-}1}2}$ is strictly decreasing in $L_+$.
\end{lemma}

\begin{proof}  Assume conversely that $x_{2i-1}\ge x_{2i+1}$ for some number $i<m$ with $k\le 2i-1$. Since $x_{2i-1},x_{2i+1}\in L_+$, Lemma~\ref{l2} implies that $x_{2i-1}x_{2i+1}=\max\{x_{2i-1},x_{2i+1}\}=x_{2i-1}$. By Lemma~\ref{l5}, $x_{2i}x_{2i+2}=x_{2i+2}$. Then by Lemma~\ref{l4} the alternating word $x_{2i-1}x_{2i}x_{2i+1}x_{2i+2}$ is equal to $x_{2i-1}x_{2i+2}$. This implies that the word $x_k\cdots x_n$ is reducible, which contradicts the choice of this word. This contradiction shows that $x_{2i-1}<x_{2i+1}$ and hence $x_{2i-1}x_{2i+1}=\max\{x_{2i-1},x_{2i+1}\}=x_{2i+1}\ne x_{2i-1}$ according to Lemma~\ref{l2}.
\end{proof}

Next, we consider the subword $x_{2m+1}\cdots x_n$ of the word $w=x_k\cdots x_n$.

\begin{lemma}\label{l7} The sequence $(x_{2m+2i})_{1{\le}i{\le}\frac{n{-}2m}2}$ is strictly increasing in $L_-$.
\end{lemma}

\begin{proof} Assume conversely that $x_{2i}\ge x_{2i+2}$ for some number $i>m$ with $2i+2\le n$. We can assume that $i$ is the smallest possible number with this property. Then either $i=m+1$ or else $x_{2i-2}<x_{2i}$. If $i=m+1$, then $x_{2m}=x_{2i-2}\le x_{2i}$ by the choice of $m$. In both cases we get $x_{2i-2}\le x_{2i}$, which implies $x_{2i-2}x_{2i}=\min\{x_{2i-2},x_{2i}\}=x_{2i-2}$ according to Lemma~\ref{l1}. The same Lemma~\ref{l1}  implies that $x_{2i}x_{2i+2}=\min\{x_{2i},x_{2i+2}\}=x_{2i+2}$.

Now consider the elements $x_{2i-1}, x_{2i+1}\in L_+$. If $x_{2i-1}\ge x_{2i+1}$, then $x_{2i-1}x_{2i+1}=\max\{x_{2i-1},x_{2i+1}\}=x_{2i-1}$ and the alternating word $x_{2i-1}x_{2i}x_{2i+1}x_{2i+2}=x_{2i-1}x_{2i+2}$ is reducible according to Lemma~\ref{l4}. If $x_{2i-1}<x_{2i+1}$, then $x_{2i-1}x_{2i+1}=\max\{x_{2i-1},x_{2i+1}\}=x_{2i+1}$ and the alternating word $x_{2i-2}x_{2i-1}x_{2i}x_{2i+1}=x_{2i-2}x_{2i+1}$ is reducible by Lemma~\ref{l4}. But this contradicts the irreducibility of the word $x_k\cdots x_n$. So, $x_{2i}<x_{2i+2}$ and $x_{2i}x_{2i+2}=\min\{x_{2i}x_{2i+2}\}=x_{2i}\ne x_{2i+2}$ according to Lemma~\ref{l1}.
\end{proof}

\begin{lemma}\label{l8} The sequence $(x_{2m+1+2i})_{0{\le}i{\le}\frac{n{-}2m-1}2}$ is strictly decreasing in $L_+$.
\end{lemma}

\begin{proof} Assume conversely that $x_{2i-1}\le x_{2i+1}$ for some $i>m$ with $2i+1\le n$. Then $x_{2i-1}x_{2i+1}=\max\{x_{2i-1},x_{2i+1}\}=x_{2i+1}$ according to Lemma~\ref{l2}. If $i=m+1$, then
$x_{2i-2}x_{2i}=x_{2m}x_{2i}=\min\{x_{2m},x_{2i}\}=x_{2m}=x_{2i-2}$ by the choice of the number $m$. If $i>m+1$, then $x_{2i-2}x_{2i}=x_{2i-2}$ by Lemma~\ref{l7}. By Lemma~\ref{l4} the alternating word $x_{2i-2}x_{2i-1}x_{2i}x_{2i+1}=x_{2i-2}x_{2i+1}$ is reducible, which contradicts the irreducibility of the word $x_k\cdots x_n$. This contradiction shows that $x_{2i-1}>x_{2i+1}$ and hence $x_{2i-1}x_{2i+1}=\max\{x_{2i-1},x_{2i+1}\}=x_{2i-1}\ne x_{2i+1}$ according to Lemma~\ref{l2}.
\end{proof}

Now we are ready to complete the proof of Theorem~\ref{represent}. Five cases are possible.
\smallskip

1) $2m=n$. In this case $x_k\dots x_n=x_k\dots x_{2m}$ is a $V_{\pm}$-word by Lemmas~\ref{l5} and \ref{l6}.
\smallskip

2) $2m+1=n$ and $k=2m$.  In this case $x_k\dots x_n=x_{2m}x_{2m+1}$ is a $V_\mp$-word.
\smallskip

3) $2m+1=n$ and $k\le 2m-1$. This case has three subcases.
\smallskip

3a) If $x_{2m-1}<x_{2m+1}$, then $x_k\dots x_n=x_k\dots x_{2m-1}x_{2m}x_{2m+1}$ is a $V_{\mp}$-word by Lemmas~\ref{l5} and \ref{l6}.

3b) If $x_{2m-1}>x_{2m+1}$, then $x_k\dots x_n=x_k\dots x_{2m-1}x_{2m}x_{2m+1}$ is a $V_{\pm}$-word by Lemmas~\ref{l5} and \ref{l6}.

3c) If $x_{2m-1}=x_{2m+1}$, then $x_k\dots x_n=x_k\dots x_{2m-1}x_{2m}x_{2m+1}$ is a $W_+$-word by Lemmas~\ref{l5} and \ref{l6}.
\smallskip

4) $2m+2\le n$ and $k=2m$. Since $x_{2m}\le x_{2m+2}$, this case has two subcases.
\smallskip

4a) If $x_{2m}<x_{2m+2}$, then $x_k\dots x_n=x_{2m}x_{2m+1}x_{2m+2}\dots x_n$ is a $V_{\mp}$-word by Lemmas~\ref{l7} and \ref{l8}.

4b) If $x_{2m}=x_{2m+2}$, then $x_k\dots x_n=x_{2m}x_{2m+1}x_{2m+2}\dots x_n$ is a $W_-$-word  by Lemmas~\ref{l7} and \ref{l8}.
\smallskip

5) $2m+2\le n$ and $k\le 2m-1$. This case has four subcases.
\smallskip

5a) $x_{2m}<x_{2m+2}$ and $x_{2m-1}<x_{2m+1}$. In this case
$x_k\dots x_n=x_k\dots x_{2m}x_{2m+1}\dots x_n$ is a $V_{\mp}$-word by Lemmas~\ref{l5}---\ref{l8}.

5b) $x_{2m}<x_{2m+2}$ and $x_{2m-1}>x_{2m+1}$. In this case
$x_k\dots x_n=x_k\dots x_{2m-1}x_{2m}\dots x_n$ is a $V_{\pm}$-word  by Lemmas~\ref{l5}---\ref{l8}.

5b) $x_{2m}<x_{2m+2}$ and $x_{2m-1}=x_{2m+1}$. In this case
$x_k\dots x_{2m-1}x_{2m}x_{2m-1}\dots x_n$ is a $W_+$-word  by Lemmas~\ref{l5}--\ref{l8}.

5c) $x_{2m}=x_{2m+2}$. In this case we shall prove that $x_{2m-1}>x_{2m+1}$.
Assuming  that $x_{2m-1}\le x_{2m+1}$ we can apply Lemma~\ref{l2} to conclude that $x_{2m-1}x_{2m+1}=\max\{x_{2m-1},x_{2m+1}\}=x_{2m-1}$. It follows from $x_{2m}=x_{2m+2}$ that $x_{2m}x_{2m+2}=x_{2m+2}$. By Lemma~\ref{l4}, the alternating word $x_{2m-1}x_{2m}x_{2m+1}x_{2m+2}$ is reducible, which is a contradiction. So, $x_{2m-1}>x_{2m+1}$ and hence
$x_k\dots x_n=x_k\dots x_{2m}x_{2m+1}x_{2m+2}\dots x_n$ is a $W_-$-word  by Lemmas~\ref{l5}--\ref{l8}.
\smallskip

Therefore each irreducible word in $FS_L$ is a Kuratowski word, which implies that $|K|\le |\K_L|$. If the set $L$ is finite, then the set $\K_L$ of Kuratowski words over $L$ has cardinality $|\K_L|=K(|L_-|,|L_+|)$, see Theorem~\ref{kuratwords}.
\end{proof}

\section{Separation of Kuratowski words by homomorphisms}\label{s:separ}

In the preceding section we proved that any element of a Kuratowski monoid $K$ with a linear generating set $L$ can be represented by a Kuratowski word $w\in\K_L$. In this section we shall prove that Kuratowski words can be separated by homomorphisms into the Kuratowski monoids of suitable $2$-topological spaces.

Given an $n$-topological space $\mathbf X=\big(X,(\tau_i)_{i\in n}\big)$, observe that the linear generating set $$L({\mathbf X})=\{\breve\tau_i\}_{i\in n}\cup\{1_X\}\cup\{\bar\tau_i\}_{i\in n}$$ of its Kuratowski monoid $\IK(\mathbf X)$ is symmetric.

This observation motivates the following definition. A {\em $*$-linearly ordered set} is a linearly ordered set $L$ endowed with an involutive bijection $*:L\to L$, $*:\ell\mapsto\ell^*$, that has a unique fixed point $1\in L$ and is {\em decreasing} in the sense that for any elements $x<y$ in $L$ we get $x^*>y^*$. Each $*$-linearly ordered set $L$ is pointed -- the unit of $L$ is the unique fixed point of the involution $*:L\to L$. Observe that the structure of a $*$-linearly ordered set $L$ is determined by the structure of its negative part $L_-$.

A map $f:L\to\Lambda$ between two $*$-linearly ordered sets $L,\Lambda$ will be called a {\em $*$-morphism} if $f$ is {\em monotone} (in the sense that for any elements $x\le y$ of $L$ we get $f(x)\le f(y)$) and {\em preserves the involution} (in the sense that $f(x^*)=f(x)^*$ for every $x\in L$. Since $f(1)=f(1^*)=f(1)^*$, the image $f(1)$ of the unit of $L$ coincides with the unit of $\Lambda$). Observe that each $*$-morphism $f:L\to\Lambda$ is uniquely determined by its restriction $f|L_-$.

For a $*$-linearly ordered set $L$, the involution $*:L\to L$ of $L$ has a unique extension to an involutive semigroup isomorphism $*:FS_L\to FS_L$ of the free semigroup over $L$. The image of a word $w\in FS_L$ under this involutive isomorphism will be denoted by $w^*$.


Let $\mathbf X=(X,\Tau)$ be a polytopological space and $$L(\mathbf X)=\{\breve\tau:\tau\in\Tau\}\cup\{1_X\}\cup\{\bar\tau:\tau\in\Tau\}$$ be the linear generating set of the Kuratowski monoid $\IK(\mathbf X)$ of $\mathbf X$. Observe that each topology $\tau$ is determined by its interior operator $\breve\tau$ (since $\tau=\{\breve\tau(A):A\subset X\}$). This implies that the interior operators $\breve\tau$, $\tau\in\Tau$, are pairwise distinct. The same is true for the closure operators $\bar\tau$, $\tau\in\Tau$. This allows us to define a bijective involution $*:L(\mathbf X)\to L(\mathbf X)$ letting $\breve\tau^*=\bar\tau$ and $\bar\tau^*=\breve\tau$ for every $\tau\in\Tau$.
This involution turns $L(\mathbf X)$ into a $*$-linearly ordered set.

Let $L$ be a $*$-linearly ordered set. Choose any point $c\notin L$ and consider the free semigroup $FS_{L\cup\{c\}}$ over the set $L\cup\{c\}$. This semigroup consists of words in the alphabet $L\cup\{c\}$. Let $\mathbf X=(X,\Tau)$ be a polytopological space and $L(\mathbf X)$ be the linear generating set of the Kuratowski monoid $\IK(\mathbf X)$ of $\mathbf X$. Let $c_X:\mathcal P(X)\to\mathcal P(X)$, $c_X:A\mapsto X\setminus A$, denote the operator of taking complement.

Given any $*$-morphism $f:L\to L(\mathbf X)$ let $\hat f:FS_{L\cup\{c\}}\to \IK_2(\mathbf X)$ be a (unique) semigroup homomorphism such that $\hat f(1)=1_X$, $\hat f(c)=c_X$, and $\hat f(\ell)=f(\ell)$ for $\ell\in L$. The homomorphism $\hat f$ will be called the {\em Kuratowski extension} of $f$.

Observe that $\hat f(\K_L)\subset \IK(\mathbf X)$. In the semigroup $FS_{L\cup\{c\}}$ consider the subset $$\widetilde \K_L=\K_L\cup\{cw:w\in\K_L\}\subset FS_{L\cup\{c\}}$$whose elements will be called {\em full Kuratowski words}.

\begin{theorem}\label{separate} For any $*$-linearly ordered set $L$ and any two distinct words $u,v\in\widetilde \K_L$ there is a 2-topological space $\mathbf X$, and a $*$-morphism $f:L\to L(\mathbf X)$ whose Kuratowski extension $\hat f:FS_{L\cup\{c\}}\to\IK_2(\mathbf X)$ separates the words $u,v$ in the sense that $\hat f(u)\ne \hat f(v)$.
\end{theorem}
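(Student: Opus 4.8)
The plan is: first reduce to a finite chain $L$ and strip off the complement operator; then collapse $L$ onto a five-element $*$-chain by a carefully chosen monotone $*$-morphism; and finally separate the resulting words inside one explicit, \emph{non-saturated} $2$-topological space in which all Kuratowski words over its (five-element) generating chain are realized distinctly.

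\textbf{Reductions.} First I would note that only finitely many letters of $L$ occur in $u$ and $v$; together with their $*$-images they generate a finite $*$-linearly ordered subset $L'\subset L$, and any $*$-morphism on $L'$ extends monotonically and $*$-equivariantly to $L$, so I may assume $L$ finite. Next I would eliminate $c$. If $u=cu'$ and $v=cv'$ with $u',v'\in\K_L$, then $\hat f(u)=c_X\circ\hat f(u')$ and $\hat f(v)=c_X\circ\hat f(v')$; since $c_X$ is a bijection of $\mathcal P(X)$ it suffices to separate $u'$ and $v'$, so we are back inside $\K_L$. If exactly one of the words carries a leading $c$, say $u=cu'$ with $u'\in\K_L$ and $v\in\K_L$, then for \emph{any} non-empty $X$ we have $\hat f(u)(\emptyset)=c_X(\hat f(u')(\emptyset))=c_X(\emptyset)=X$ while $\hat f(v)(\emptyset)=\emptyset$ (a composition of interior and closure operators fixes $\emptyset$), so $\hat f(u)\ne\hat f(v)$ for free. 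Thus everything comes down to separating two distinct Kuratowski words $u,v\in\K_L$.

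\textbf{The target space.} I would fix once and for all one explicit $2$-topological space $\mathbf X_0=(X_0,\{\sigma_0\subset\sigma_1\})$ together with a subset $A_0\subset X_0$ such that the $K(2)=63$ Kuratowski words over the five-element $*$-chain $L(\mathbf X_0)=\{\breve\sigma_0<\breve\sigma_1<1_{X_0}<\bar\sigma_1<\bar\sigma_0\}$ send $A_0$ to $63$ pairwise distinct subsets of $X_0$. By Theorem~\ref{satur} such $\mathbf X_0$ cannot be saturated, so the construction must break saturation, e.g.\ by enlarging a Euclidean-type topology $\sigma_0$ by declaring a $\sigma_0$-nowhere dense set to be $\sigma_1$-open, and then one exhibits $A_0$ and checks the $63$ sets — a finite verification in the spirit of the $13$-set example. (This also gives the case $n=2$ of Theorem~\ref{exact} directly, so no circularity is incurred.) By Theorem~\ref{represent} applied to $\mathbf X_0$, each element $\hat f(w)\in\IK(\mathbf X_0)$ is represented by the Kuratowski word over $L(\mathbf X_0)$ obtained from the image word $f(w)$ by the reductions of Lemmas~\ref{l1}--\ref{l8}; since $\mathbf X_0$ realizes all $63$ words distinctly, $\hat f(u)=\hat f(v)$ if and only if the reductions of $f(u)$ and $f(v)$ coincide \emph{as words}.

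\textbf{Choosing the morphism --- the hard part.} A monotone $*$-morphism $f\colon L\to L(\mathbf X_0)$ is precisely a choice of two ``cut levels'' in $L_-=\{\ell_1<\dots<\ell_n\}$: letters below the first cut go to $\breve\sigma_0$, letters between the cuts to $\breve\sigma_1$, letters above to $1_{X_0}$, and the action on $L_+$ is then forced by $*$-equivariance. So it remains to place the two cuts so that the reductions of $f(u)$ and $f(v)$ differ, and this placement is the hard part. The key is the rigid ``single valley'' shape of Kuratowski words: writing $u=x_0\cdots x_p$, the subsequence of its letters lying in $L_-$ is strictly unimodal — a strictly decreasing run down to the pivot $x_{2m}$ followed by a strictly increasing run, or the $W$-variant with a repeated pivot — and dually for the letters in $L_+$; hence $u$ is completely encoded by its type ($L^1$, $V_\mp$, $V_\pm$, $W_-$, $W_+$), by the pivot value(s) $x_{2m}$ and $x_{2m\pm1}$, and by the four subsets of $L$ occupying the ascending and descending runs. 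Since $u\ne v$, at least one of these data differs, and I would run a case analysis over the $5\times5$ type combinations to place the cuts so as to expose that difference: a cut strictly between the two pivots when the pivots differ; a cut separating from the pivot a letter occurring in one run of $u$ but not in the corresponding run of $v$; cuts of suitable parity when only a run length or the type differs. In each case one checks, using only Lemmas~\ref{l1}--\ref{l4} inside the five-element chain $L(\mathbf X_0)$, that the two reductions remain distinct Kuratowski words over $L(\mathbf X_0)$ — typically one reduction is $\le$ the other, and strictness at the differing spot survives exactly because $\mathbf X_0$ is non-saturated (so that $\breve\sigma_0\bar\sigma_1<\breve\sigma_0\bar\sigma_0$ and the analogous inequalities hold strictly in $\IK(\mathbf X_0)$). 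Working through this case analysis in full is the technical bulk of the argument; the reductions above and the black-box properties of $\mathbf X_0$ are routine by comparison.
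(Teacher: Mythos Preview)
Your strategy is genuinely different from the paper's and is sound in principle, but it leaves the two heaviest pieces of work as promissory notes.

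\textbf{Comparison with the paper.} The paper does \emph{not} fix a single universal target; instead, for each pair $(u,v)$ it builds a tiny $2$-topological space ad hoc (two- or three-point sets with topologies like $\tau_a,\tau_x,\tau_{x,z},\tau_{x,z,yz}$) together with a $*$-morphism and a specific test set $A$, and then simply evaluates $\hat f(u)(A)$ versus $\hat f(v)(A)$. The case split is organized by the \emph{rightmost index $k$ where $u$ and $v$ first differ} and by the sign/relative size of $u_k,u_{k-1},u_{k-2},v_k,v_{k\pm1}$, not by the $5\times5$ type pairs. This adaptivity is what keeps each case to a one-line computation and, crucially, it avoids needing any universal $\mathbf X_0$: the paper only constructs such a space afterwards (Corollary~\ref{separateL}) by taking a disjoint union over all pairs. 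Your route reverses this logical order: you want the $n=2$ instance of Theorem~\ref{exact} as a \emph{lemma} for Theorem~\ref{separate}, and then replace the paper's direct evaluations by ``reduce $f(u),f(v)$ inside $FK_{2,2}$ and observe they land on different Kuratowski words''. That is exactly the content of Proposition~\ref{FK22}, which in the paper is a \emph{corollary} of Theorem~\ref{separate}; you are proposing to prove it first, by hand.

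\textbf{Where the gap is.} Two places. First, producing $\mathbf X_0$ with $63$ distinct orbit-sets is a real construction, not a throwaway remark: the $13$-set example is saturated and gives far fewer, and the paper's own $63$-element witness is obtained only \emph{after} Theorem~\ref{separate}; you would need to exhibit an explicit non-saturated pair $\sigma_0\subset\sigma_1$ and verify all $63$ values, which you do not do. Second, your ``hard part'' is only a heuristic: knowing that a Kuratowski word is encoded by type, pivot, and four monotone runs tells you \emph{what} must differ, but it does not tell you that two cut-points in $L_-$ (with the $L_+$ cuts forced by $*$-equivariance) always suffice to keep the reductions distinct in $FK_{2,2}$. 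The paper's proof shows this is delicate: in the subcase it labels (3bb) it must pass to three-point spaces with topologies like $\tau_{x,z,yz}$ precisely because the straightforward two-point evaluations collapse; in your framework this is the situation where a naive placement of cuts makes the $FK_{2,2}$-reductions of $f(u)$ and $f(v)$ coincide even though $u\ne v$, and you would have to argue carefully which cut placement survives. Until that analysis is actually written out, the proposal is an outline rather than a proof.

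Your reductions for the complement are fine (and slightly slicker than the paper's: evaluating at $\emptyset$ handles the mixed case in one line, whereas the paper picks a constant morphism and checks $1_X\ne c_X$).
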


\begin{proof} In most of cases the underlying set of the $2$-topological space $\mathbf X$ will be a set $X=\{x,y\}$ containing two pairwise distinct points $x,y$ and the topologies of $\mathbf X$ are equal to one of four possible topologies on $X$:
\begin{itemize}
\item $\tau_d=\big\{\emptyset,\{x\},\{y\},X\big\}$, the discrete topology on $X$;
\item $\tau_a=\big\{\emptyset,X\big\}$, the anti-discrete topology on $X$;
\item $\tau_x=\{\emptyset,\{x\},X\}$;
\item $\tau_y=\{\emptyset,\{y\},X\}$.
\end{itemize}

Fix any two distinct words $u,v\in \widetilde \K_L$ and consider four cases.
\smallskip

1) $u\in\K_L$ and $v\notin\K_L$. In this case consider the 2-topological space $\mathbf X=\big(X,(\tau_a,\tau_d)\big)$. Then for the $*$-morphism $f:L\to \{1_X\}\subset L(\mathbf X)$ we get $\hat f(u)=1_X\ne c_X=\hat f(v)$.
\smallskip

2) $v\in\K_L$ and $u\notin\K_L$. In this case take the 2-topological space $\mathbf X$ from the preceding case and observe that for the $*$-morphism $f:L\to \{1_X\}\subset L(\mathbf X)$ we get $\hat f(u)=c_X\ne 1_X=\hat f(v)$.
\smallskip

3) $u,v\in\K_L$. Denote by $u_0,v_0\in L$ the last letters of the words $u,v$, respectively. Consider two cases.
\smallskip

3a) $u_0\ne v_0$. We lose no generality assuming that $u_0<v_0$ (in the linearly ordered set $L$).
\smallskip

Five subcases are possible:
\smallskip

3aa) $u_0=1$ and $v_0\in L_+$. In this case consider the 2-topological space $\mathbf X=\big(\{x,y\},(\tau_a,\tau_d)\big)$ and the $*$-morphism $f:L\to L(\mathbf X)$ assigning to each $\ell\in L_-$ the operator $\breve \tau_a$. Then for the subset $A=\{x\}$ of $\mathbf X$ and the operators $\hat u=\hat f(u)$ and $\hat v=\hat f(v)$, we get
$\hat u(A)=A\ne X=\bar\tau_a(A)=\hat v(A)$, which implies that $\hat f(u)\ne\hat f(v)$.
\smallskip

3ab) $u_0\in L_-$ and $v_0=1$. In this case, take the 2-topological space $\mathbf X$, the subset $A=\{x\}$, and the $*$-morphism $f:L\to  L(\mathbf X)$ from the preceding case. Then $\hat u(A)=\emptyset\ne A=\hat v(A)$, which implies that $\hat f(u)\ne\hat f(v)$.
\smallskip

3ac) $u_0\in L_-$ and $v_0\in L_+$. In this case, take the 2-topological space $\mathbf X$, the subset $A=\{x\}$ and the $*$-morphism $f:L\to L(\mathbf X)$ from case (3aa). Then $\hat u(A)=\emptyset\ne X=\hat v(A)$, which implies that $\hat f(u)\ne\hat f(v)$.
\smallskip

3ad) $u_0,v_0\in L_-$. Consider the 2-topological space $\mathbf X=\big(\{x,y\},(\tau_a,\tau_x)\big)$ and the $*$-morphism $f:L\to L(\mathbf X)$ assigning to each $\ell\in L_-$ the operator $$f(\ell)=\begin{cases}\breve \tau_a&\mbox{if $\ell\le u_0$};\\
\breve\tau_x&\mbox{if $\ell>u_0$}.
\end{cases}
$$ Put $\hat u=\hat f(u)$ and $\hat v=\hat f(v)$. Observe that for the subset $A=\{x\}$ we get
 $\hat u_0(A)=\breve\tau_a(A)=\emptyset$ and hence $\hat u(A)=\emptyset$.
Next, we evaluate $\hat v(A)$. Write the Kuratowski word $v$ as $v=v_q\dots v_0$ where $v_0,\dots,v_q\in L\setminus\{1\}$.  If $q=0$, then $\hat v(A)=\hat v_0(A)=\breve\tau_x(A)=A\ne \emptyset=\hat u(A)$. If $q>0$, then $\hat v_1\hat v_0(A)=\hat v_1(A)\in \{\bar\tau_a(A),\bar\tau_x(A)\}=\{X\}$ and hence $\hat v(A)=X\ne\emptyset=\hat u(A)$. This yields the desired inequality $\hat f(u)\ne\hat f(v)$.
\smallskip

3ae) $u_0,v_0\in L_+$. In this case we can consider the conjugated words $u^*$ and $v^*$ and observe that their last letters are distinct and belong to the set $L_-$. By the preceding item, there are a $2$-topological space $\mathbf X$ and a $*$-morphism $f:L\to L(\mathbf X)$ such that $\hat f(u^*)\ne\hat f(v^*)$. Then $\hat f(u)^*=\hat f(u^*)\ne\hat f(v^*)=\hat f(v)^*$ and hence $\hat f(u)\ne\hat f(v)$.
\smallskip

Next, consider the case:

3b) $u_0=v_0$. It follows that $u_0=v_0\in L\setminus\{1\}$. Write the Kuratowski words $u,v$ as $u=u_p\dots u_0$ and $v=v_q\dots v_0$ where $u_i,v_i\in L\setminus\{1\}$. For the sake of consistency it will be convenient to assume that
$u_i=1$ for every integer $i\notin \{0,\dots,p\}$ and $v_j=1$ for avery integer $j\notin\{0,\dots,q\}$. Un particular, $u_{-1}=v_{-1}=1$.

Since $u\ne v$, the number $k=\min\{i\ge0:u_i\ne v_i\}$ is well-defined and does not exceed $\max\{p,q\}$. It follows from $u_0=v_0$ that $k\ge 1$. By the definition of $k$ we also get the equality $u_{k-1}\dots u_0=v_{k-1}\dots v_0$. First we consider the case $u_{k-1}=v_{k-1}\in L_+$.
Since $u_k\ne v_k$, we lose no generality assuming that $u_k<v_k$. Since $u,v$ are alternating words, the inclusion $u_{k-1}=v_{k-1}\in L_+$ implies $u_k,v_k\in L_-\cup\{1\}$ and $u_k\in L_-$.
\smallskip

Two cases are possible.
\smallskip

3ba) $u_k<u_{k-2}$ (this case includes also the case of $k=1$ in which $u_1<1=u_{-1}$). Since $u\in \K_L$, the inequality
$u_k<u_{k-2}$ implies that the sequence $(u_{k-2i})_{0{\le} i{\le} \frac{k}2}$ is strictly increasing in $L_-$ and the sequence
$(u_{k-1-2i})_{0{\le} i{\le} \frac{k-1}2}=(v_{k-1-2i})_{0{\le} i{\le} \frac{k-1}2}$ is a strictly decreasing  in $L_+$.
\smallskip

Now consider two subcases.
\smallskip

3baa) $v_{k+1}>v_{k-1}$. In this case $(v^*_{k+1-2i})_{0\le i\le\frac{k+1}2}$ is a strictly increasing sequence in $L_{-}$. Depending on the relation between the elements $u_k$ and $v^*_{k+1}$ of $L_-$ we shall distinguish two subsubcases.
\smallskip

3baaa) $v^*_{k+1}\le u_k$. In this case consider the 2-topological space $\mathbf X=\big(X,(\tau_a,\tau_y)\big)$ where $X=\{x,y\}$ and define the $*$-morphism $f:L\to L(\mathbf X)$ assigning to each $\ell\in L_-$ the operator
$$f(\ell)=\begin{cases}
\breve\tau_a&\mbox{if $\ell\le v^*_{k+1}$;}\\
\breve\tau_y&\mbox{if $v^*_{k+1}<\ell\le u_k$;}\\
1_X&\mbox{if $u_{k}<\ell$}.
\end{cases}
$$Consider the subset $A=\{x\}\subset X$.
Since the sequence $(u_{k-2i})_{0\le i\le\frac k2}$ is strictly increasing in $L_-$, for every positive number $i\le\frac{k}2$ we get $v_{k-2i}=u_{k-2i}>u_k$ and hence
$\hat u_{k-2i}=\hat v_{k-2i}=1_X$, which implies that $\hat u_{k-2i}(A)=\hat v_{k-2i}(A)=A$. On the other hand, for every positive $i\le\frac{k+1}2$ we get $u^*_{k+1-2i}=v^*_{k+1-2i}>v^*_{k+1}$ and hence
$\hat u_{k+1-2i}=\hat v_{k+1-2i}\in\{\bar\tau_y,1_X\}$, which implies $\hat u_{k+1-2i}(A)=\hat v_{k+1-2i}(A)\in\{\bar\tau_y(A),1_X(A)\}=\{A\}$. So, $\hat u_{k-1}\cdots\hat u_0(A)=\hat v_{k-1}\cdots\hat v_0(A)=A$.

Observe that $\hat u_k\cdots\hat u_1(A)=\hat u_k(A)=\breve \tau_y(A)=\emptyset$ and hence $\hat u(A)=\emptyset$. On the other hand, the inequality $v_k>u_k$ implies that $\hat v_k=1_X$ and then $\hat v_k\cdots \hat v_0(A)=\hat v_k(A)=A$ and $\hat v_{k+1}\cdots\hat v_0(A)=\hat v_{k+1}(A)=\bar\tau_a(A)=X$ and then $\hat v(A)=X\ne\emptyset=\hat u(A)$, which implies that $\hat f(u)\ne\hat f(v)$.
\smallskip

3baab) $v^*_{k+1}>u_k$. In this case consider the 2-topological space $\mathbf X=\big(X,(\tau_a,\tau_x)\big)$ where $X=\{x,y\}$ and define a $*$-morphism $f:L\to L(\mathbf X)$ assigning to each $\ell\in L_-$ the operator
$$f(\ell)=\begin{cases}
\breve\tau_a&\mbox{if $\ell\le u_k$;}\\
\breve\tau_x&\mbox{if $u_k<\ell\le v^*_{k+1}$;}\\
1_X&\mbox{if $v^*_{k+1}<\ell$}.
\end{cases}
$$Consider the subset $A=\{x\}\subset X$.
Since the sequence $(u_{k-2i})_{0\le i\le \frac{k}2}$ is strictly increasing in $L_-$,  for every positive $i\le\frac k2$ we get $v_{k-2i}=u_{k-2i}>u_k$ and hence
$\hat u_{k-2i}=\hat v_{k-2i}\in\{\breve\tau_x,1_X\}$, which implies that $\hat u_{k-2i}(A)=\hat v_{k-2i}(A)\in\{\breve\tau_x(A),1_X(A)\}=\{A\}$.
Since the sequence $(v^*_{k+1-2i})_{0\le i\le \frac{k+1}2}$ is strictly increasing in $L_-$,
 for every positive $i\le \frac{k+1}2$ we get $u^*_{k+1-2i}= v^*_{k+1-2i}>v^*_{k+1}$ and hence
$\hat u_{k+1-2i}=\hat v_{k+1-2i}=1_X$, which implies $\hat u_{k+1-2i}(A)=\hat v_{k+1-2i}(A)=A$. So, $\hat u_{k-1}\cdots\hat u_0(A)=\hat v_{k-1}\cdots\hat v_0(A)=A$.

Observe that $\hat u_k\cdots\hat u_1(A)=\hat u_k(A)=\breve \tau_a(A)=\emptyset$ and hence $\hat u(A)=\emptyset$. Next, we evaluate $\hat v(A)$. If $v_k=1$, then $\hat v(A)=\hat v_{k-1}\cdots\hat v_0(A)=A\ne\emptyset =\hat u(A)$. If $v_k\ne 1$ but $v_{k+1}=1$, then $u_k<v_k$ implies that $\hat v_k\in\{\breve\tau_x,1_X\}$ and hence
 $\hat v(A)=\hat v_k\cdots \hat v_0(A)=\hat v_k(A)\in\{\breve\tau_x(A),1_X(A)\}=\{A\}$ and
 hence $\hat v(A)=A\ne\emptyset=\hat u(A)$. If $v_{k+1}\ne 1$, then
 $\hat v_{k+1}\cdots\hat v_1(A)=\hat v_{k+1}(A)=\bar\tau_x(A)=X$ and then $\hat v(A)=X\ne\emptyset=\hat u(A)$. This yields the desired inequality $\hat f(u)\ne\hat f(v)$.
\smallskip

Next, consider the subcase:
\smallskip

3bab) $v_{k+1}=v_{k-1}$. In this case $v\in \W_+$ and hence the sequences $(v_{k-2i})_{0{\le}i{\le}\frac k2}$ and $(v_{k+2i})_{0{\le}i{\le}\frac{q-k}2}$ are strictly increasing in $L_-$ whereas the sequences $(v_{k-1-2i})_{0{\le}i{\le}\frac{k-1}2}$ and $(v_{k+1+2i})_{0{\le}i{\le}\frac{q-k-1}2}$ are strictly decreasing in $L_+$. Consider the 2-topological space $\mathbf X=\big(\{x,y\},(\tau_a,\tau_y)\big)$ and define a $*$-morphism $f:L\to L(\mathbf X)$ assigning to each element $\ell\in L_-$ the operator
$$f(\ell)=\begin{cases}
\breve\tau_y&\mbox{if $\ell\le u_k$;}\\
1_X&\mbox{if $\ell>u_k$}.
\end{cases}
$$
Also consider the subset $A=\{x\}$ in the 2-topological space $\mathbf X$.

Taking into account that $u_k<v_k\le v_{k+2i}$ for any $i\in\IZ$ with $0\le k+2i\le q$, we conclude that $\hat v_{k+2i}=1_X$. On the other hand, for every $i\in\IZ$ with $1\le k+1+2i\le q$ we get $\hat v_{k+1+2i}\in\{\bar\tau_y,1_X\}$ and hence $\hat v_{k+1+2i}(A)\in\{\bar\tau_y(A),\bar\tau_d(A)\}=\{A\}$. This implies that $\hat v(A)=A$.

On the other hand, $\hat u_k\cdots\hat u_0(A)=\hat u_k\hat v_{k-1}\cdots\hat v_0(A)=\hat u_k(A)=\breve\tau_y(A)=\emptyset$ and then $\hat u(A)=\emptyset\ne A=\hat v(A)$. This implies that $\hat f(u)\ne\hat f(v)$.
\smallskip

Finally, consider the subcase:
\smallskip

3bac) $v_{k+1}<v_{k-1}$. Taking into account that  $v\in \K_L$ and the sequence $(v_{k-1-2i})_{0\le i\le \frac{k-1}2}$ is strictly decreasing in $L_+$, we conclude that $v_{k-1}>v_{k-1+2i}$ for any non-zero integer number $i$ with $0\le k-1+2i\le q$ and $u_k<\min\{u_{k-2},v_k\}=\min\{v_{k-2},v_k\}\le v_{k+2i}$ for any integer number $i$ with $k+2i\in\{0,\dots,q\}$. Take the 2-topological space $\mathbf X$, the subset $A=\{x\}$, and the $*$-morphism $f:L\to L(\mathbf X)$ from the case (3bab).
By analogy with the preceding case it can be shown that $\hat v(A)=A\ne \emptyset=\hat u(A)$ and hence $\hat f(u)\ne\hat f(v)$.
This completes the proof of case (3ba).
\smallskip

So, now we consider the case:
\smallskip

3bb) $u_k\ge u_{k-2}$. This case is more difficult and requires to consider a set $X=\{x,y,z\}$ of cardinality $|X|=3$ endowed with the topologies:
\begin{itemize}
\item $\tau_{x,z}=\big\{\emptyset,\{x\},\{z\},\{x,z\},X\big\}$,
\item $\tau_{x,z,yz}=\big\{\emptyset,\{x\},\{z\},\{y,z\},\{x,z\},X\big\}$,
\item $\tau_{x,z,xy}=\big\{\emptyset,\{x\},\{z\},\{x,y\},\{x,z\},X\}$.
\end{itemize}
In $X$ consider the subset $A=\{x\}$.

The inequality $u_{k-2}\le u_k\in L_-$ and the inclusion $u\in\K_L$ imply that
the sequence $\big(u_{k-1+2i}\big)_{0\le i\le \frac{p-k+1}2}$ is strictly decreasing in $L_+$. By analogy, the (strict) inequality $v_{k-2}=u_{k-2}\le u_k<v_k$ implies that sequence $\big(v_{k-2+2i}\big)_{0\le i\le \frac{q-k+2}2}$ is strictly increasing in $L_-$ and $\big(v_{k-1+2i}\big)_{0\le i\le\frac{q-k+1}2}$ in strictly decreasing in $L_+$. Let $u^*_{k-1}$ be the letter in $L_-$, symmetric to the letter $u_{k-1}$ with respect to 1.
Depending on the relation between $u^*_{k-1}$ and $u_k$ we consider two subcases:
\smallskip

3bba) $u^*_{k-1}\le u_{k}$. In this case consider the 2-topological space $\mathbf X=\big(X,(\tau_{x,z},\tau_{x,z,yz})\big)$ where $X=\{x,y,z\}$ and define the $*$-morphism $f:L\to L(\mathbf X)$ assigning to every element $\ell\in L_-$ the operator
$$f(\ell)=\hat\ell:=\begin{cases}
\breve\tau_{x,z}&\mbox{if $\ell\le u^*_{k-1}$};\\
\breve\tau_{x,z,yz}&\mbox{if $u^*_{k-1}<\ell\le u_k$};\\
1_X&\mbox{if $u_k<\ell$}.
\end{cases}
$$Consider the subset $A=\{x\}$ of $X$.
Observe that for every $\ell\in L$ we get $\{x\}\subset \hat\ell(\{x\})\subset \hat\ell(\{x,y\})\subset\{x,y\}$. Then $$\{x\}\subset \hat u_{k-2}\hat u_{k-3}\cdots\hat u_0(A)\subset \{x,y\}$$ and $$\{x,y\}=\bar\tau_{x,z}(\{x\})=\hat u_{k-1}(\{x\})\subset \hat u_{k-1}\hat u_{k-2}\cdots \hat u_0(A)\subset\hat u_{k-1}(\{x,y\})=\bar\tau_{x,z}(\{x,y\})=\{x,y\}.$$ So, $\hat v_{k-1}\cdots\hat v_0(A)=\hat u_{k-1}\cdots \hat u_0(A)=\{x,y\}$ and
$\hat u_k\cdots\hat u_1(A)=\hat u_k(\{x,y\})=\breve \tau_{x,z,yz}(\{x,y\})=\{x\}=A$.

Since the sequence $(u_{k-1+2i})_{0\le i\le\frac{p-k+1}2}$ is strictly decreasing in $L_+$, the dual sequence $(u^*_{k-1+2i})_{0\le i\le\frac{p-k+1}2}$ is strictly increasing in $L_-$. Then $\{\hat u_{k-1+2i}\}_{1\le i\le\frac{p-k-1}2}\subset\{\bar\tau_{x,z,yz},1_X\}$ and then $\hat u(A)=\hat u_p\cdots\hat u_{k+1}(A)\subset A$.

On the other hand, $v_k>u_k$ implies that $\hat v_k\cdots\hat v_1(A)=\hat v_k(\{x,y\})=1_X(\{x,y\})=\{x,y\}$. Taking into account that $u_k<v_k$ and the sequence $(v_{k+2i})_{0\le i\le \frac{q-k}2}$ is strictly increasing in $L_-$, we conclude that $\{v_{k+2i}\}_{0\le i\le \frac{q-k}2}\subset\{1_X\}$, which implies that $\hat v(A)=\hat v_k\cdots\hat v_0(A)\supset\{x,y\}\ne \{x\}\supset\hat u(A)$. This yields the desired inequality $\hat f(u)\ne\hat f(v)$.
\smallskip

3bbb) $u_k<u^*_{k-1}$. In this case consider the 2-topological space $\mathbf X=\big(X,(\tau_{x,z},\tau_{x,z,xy})\big)$ where $X=\{x,y,z\}$. Define a $*$-morphism $f:L\to L(\mathbf X)$ assigning to every element $\ell\in L_-$ the operator
$$f(\ell)=\begin{cases}
\breve\tau_{x,z}&\mbox{if $\ell\le u_k$};\\
\breve\tau_{x,z,xy}&\mbox{if $u_k<\ell\le u^*_{k-1}$};\\
1_X&\mbox{if $u^*_{k-1}<\ell$}.
\end{cases}
$$Consider the subset $A=\{x\}$ of $X$. It follows that $$\hat u_{k-1}\cdots\hat u_0(A)\in\{\hat u_{k-1}(\{x\}),\hat u_{k-1}(\{x,y\})\}=\{\bar\tau_{x,z,xy}(\{x\}),\bar\tau_{x,z,xy}(\{x,y\})\}=\big\{\{x,y\}\big\}$$ and hence $\hat u_k\cdots\hat u_0(A)=\hat u_k(\{x,y\})=\breve\tau_{x,z}(\{x,y\})=\{x\}$.
Taking into account that the sequence\break $(u^*_{k-1+2i})_{0\le i\le \frac{p{-}k{+}1}2}$ is strictly increasing in $L_-$, we conclude that $\hat u_{k-1+2i}=1_X$ for all positive $i\le \frac{p-k+1}2$. This implies that $\hat u(A)=\hat u_p\cdots\hat u_{k+1}(A)\subset A=\{x\}$.

On the other hand, $\hat v_k\cdots\hat v_0(A)=\hat v_k(\{x,y\})\in \{\breve\tau_{x,z,xy}(\{x,y\}),1_X(\{x,y\})\}=\big\{\{x,y\}\big\}$. Taking into account that $u_k<v_k$ and the sequence $(v_{k+2i})_{0{\le}i{\le}\frac{q-k}2}$ is strictly increasing, we conclude that $\{\hat v_{k+2i}\}_{0\le i\le \frac{q-k}2}\subset \{\breve\tau_{x,z,xy},1_X\}$, which implies that $\hat v(A)=\hat v_q\cdots\hat v_{k+1}(\{x,y\})\supset \{x,y\}$. So, $\hat u(A)\ne\hat v(A)$, which implies that $\hat f(u)\ne\hat f(v)$.
\smallskip

This completes the proof of case (3) under the assumption $u_{k-1}=v_{k-1}\in L_+$. If $u_{k-1}=v_{k-1}\in L_-$, then we can consider the dual words $u^*=u_p^*\cdots u_0^*$ and $v^*=v^*_q\cdots v^*_0$. For these dual words we get $u_i^*=v_i^*$ for $i<k$ and  $v_{k-1}^*=u_{k-1}^*\in L_+$. In this case the preceding proof yields an $2$-topological space $\mathbf X$ and a $*$-morphism $f:L\to L(\mathbf X)$ such that $\hat f(u)^*=\hat f(u^*)\ne\hat f(v^*)=\hat f(v)^*$, which implies that $\hat f(u)\ne\hat f(v)$.
\smallskip

Finally, consider the case:

4) $u,v\in\widetilde\K_L\setminus\K_L$. Then $u=cu'$ and $v=cv'$ for some distinct words $u',v'\in\K_L$. By the case (3), we can find a $2$-topological space $\mathbf X$ and a $*$-morphism $f:L\to L(\mathbf X)$ such that $\hat f(u')\ne\hat f(v')$. This means that $\hat f(u')(A)\ne\hat f(v')(A)$ for some subset $A\subset X$. Then $$\hat f(u)(A)=\hat f(cu')(A)=\hat f(c)\hat f(u')(A)=c_X(\hat f(u')(A))\ne c_X(\hat f(v')(A))=\hat f(cv')(A)=\hat f(v)(A),$$ which means that $\hat f(u)\ne\hat f(v)$.
\end{proof}

The following corollary of Theorem~\ref{separate} implies Theorem~\ref{exact} and shows that the upper bound in Theorem~\ref{main} is exact.

\begin{corollary}\label{separateL} For any $*$-linearly ordered set $L$ there is an $L_-$-topological space $\mathbf X=\big(X,(\tau_\ell)_{\ell\in L_-}\big)$ such that the unique semigroup homomorphism $\pi:FS_{L\cup\{c\}}\to\IK_2(\mathbf X)$ such that $\pi(1)=1_X$, $\pi(c)=c_X$, $\pi(\ell)=\breve\tau_\ell$, $\pi(\ell^*)=\bar\tau_\ell$ for $\ell\in L_-$ maps bijectively the set $\K_L$ onto $\IK(\mathbf X)$ and the set $\widetilde\K_L$ onto $\IK_2(\mathbf X)$. If the set $L_-$ has finite cardinality $n$, then the Kuratowski monoid $\IK(\mathbf X)$ of $\mathbf X$ has cardinality $|\IK(\mathbf X)|=K(n,n)=K(n)$ and the full Kuratowski monoid $\IK_2(\mathbf X)$ of $\mathbf X$ has cardinality $|\IK_2(\mathbf X)|=2\cdot K(n,n)=2K(n)$.
\end{corollary}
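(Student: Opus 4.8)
The plan is to build $\mathbf X$ as a topological sum of the small $2$-topological spaces produced by Theorem~\ref{separate}, one summand for each unordered pair of distinct full Kuratowski words. The first thing to record is a dictionary: for a polytopological space $\mathbf Y=(Y,\Tau)$, a $*$-morphism $f\colon L\to L(\mathbf Y)$ is exactly the same datum as a monotone family $(\rho_\ell)_{\ell\in L_-}$ of topologies on $Y$ with each $\rho_\ell\in\Tau\cup\{\tau_d\}$: indeed $f(\ell)\le f(1)=1_Y$ forces $f(\ell)=\breve\rho_\ell$ for a unique such $\rho_\ell$, monotonicity of $f$ on $L_-$ translates into $\rho_\ell\subset\rho_{\ell'}$ whenever $\ell\le\ell'$, and then $f(\ell^*)=f(\ell)^*=\bar\rho_\ell$ is automatic; moreover the Kuratowski extension $\hat f$ is precisely the homomorphism $\pi$ of this Corollary for the $L_-$-topological space $(Y,(\rho_\ell)_{\ell\in L_-})$. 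With this translation Theorem~\ref{separate} reads: for any two distinct words $u,v\in\widetilde\K_L$ there is an $L_-$-topological space $\mathbf Y_{u,v}=(Y_{u,v},(\rho^{u,v}_\ell)_{\ell\in L_-})$ carried by a set of at most three points whose associated homomorphism $\pi_{u,v}$ satisfies $\pi_{u,v}(u)\ne\pi_{u,v}(v)$.

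Next I would set $X=\bigoplus Y_{u,v}$, the topological sum over all unordered pairs $\{u,v\}$ of distinct elements of $\widetilde\K_L$, and equip it, for each $\ell\in L_-$, with the sum topology $\tau_\ell=\bigoplus_{\{u,v\}}\rho^{u,v}_\ell$. Monotonicity of each family $(\rho^{u,v}_\ell)_{\ell}$ gives monotonicity of $(\tau_\ell)_{\ell}$, so $\mathbf X=(X,(\tau_\ell)_{\ell\in L_-})$ is an $L_-$-topological space. The key elementary fact is that in a topological sum interior and closure are computed summandwise and, under the identification $\mathcal P(X)=\prod_{\{u,v\}}\mathcal P(Y_{u,v})$, so is the complement operator; hence for every word $w\in FS_{L\cup\{c\}}$ the operator $\pi(w)$ acts on the summand $Y_{u,v}$ as $\pi_{u,v}(w)$. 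Therefore $\pi(w)=\pi(w')$ holds iff $\pi_{u,v}(w)=\pi_{u,v}(w')$ for all pairs $\{u,v\}$; in particular $\pi$ separates any two distinct elements of $\widetilde\K_L$, so $\pi$ is injective on $\widetilde\K_L$ and hence on $\K_L$.

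For surjectivity I would consider the map $g\colon L\to L(\mathbf X)$ with $g(\ell)=\breve\tau_\ell$ for $\ell\in L_-$, $g(1)=1_X$, $g(\ell^*)=\bar\tau_\ell$. It is a monotone $*$-morphism (monotonicity uses that $\mathbf X$ is an $L_-$-topological space), it is onto $L(\mathbf X)$ by the definition of the latter, and it is injective because $g$ agrees with $\pi$ on the one-letter words of $L^1\subset\K_L\subset\widetilde\K_L$, on which $\pi$ is injective by the previous step; hence $g$ is an isomorphism of $*$-linearly ordered sets. Consequently its free-semigroup extension carries $\K_L$ bijectively onto $\K_{L(\mathbf X)}$, and $\pi|_{FS_L}$ is the composition of this extension with the canonical evaluation homomorphism $FS_{L(\mathbf X)}\to\IK(\mathbf X)$, which by Theorem~\ref{represent} maps $\K_{L(\mathbf X)}$ onto $\IK(\mathbf X)$. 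Thus $\pi(\K_L)=\IK(\mathbf X)$, and since $\widetilde\K_L=\K_L\cup\{cw:w\in\K_L\}$ with $\pi(cw)=c_X\circ\pi(w)$, also $\pi(\widetilde\K_L)=\IK(\mathbf X)\cup(c_X\circ\IK(\mathbf X))=\IK_2(\mathbf X)$. Together with the injectivity from the second step, $\pi$ is bijective from $\K_L$ onto $\IK(\mathbf X)$ and from $\widetilde\K_L$ onto $\IK_2(\mathbf X)$. Finally, if $|L_-|=n$ then $|L_+|=n$, so $|\IK(\mathbf X)|=|\K_L|=K(n,n)=K(n)$ by Theorem~\ref{kuratwords}(5); since $w\mapsto cw$ is injective with image disjoint from $\K_L$ we get $|\widetilde\K_L|=2|\K_L|$, whence $|\IK_2(\mathbf X)|=2K(n)$.

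The bulk of the difficulty has already been absorbed into Theorems~\ref{represent} and \ref{separate}; in the present argument the two points requiring care are the dictionary of the first paragraph and the claim in the third paragraph that $g$ is an isomorphism, i.e.\ that distinct indices $\ell$ yield distinct topologies $\tau_\ell$, none of them discrete. Both of these reduce to the injectivity of $\pi$ on one-letter Kuratowski words, which is the simplest instance of the separation established in the second paragraph, so I do not expect a genuine obstacle beyond careful bookkeeping.
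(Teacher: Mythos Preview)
Your proposal is correct and follows essentially the same route as the paper: translate the $*$-morphisms of Theorem~\ref{separate} into $L_-$-topologies on the small separating spaces (the paper does this via the explicit formula for $\tau_{u,v}$, you via the ``dictionary''), take their topological sum, use summandwise behaviour of interior/closure/complement to get injectivity of $\pi$ on $\widetilde\K_L$, and invoke Theorem~\ref{represent} for surjectivity. The only cosmetic differences are that the paper indexes over ordered pairs and exhibits an explicit witness set $A$, while you index over unordered pairs and argue via the product decomposition $\mathcal P(X)=\prod\mathcal P(Y_{u,v})$; you are also more explicit than the paper about why $g:L\to L(\mathbf X)$ is an isomorphism.
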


\begin{proof} Let $\widetilde\K_L^2\setminus\Delta=\{(u,v)\in\widetilde\K_L\times\widetilde\K_L:u\ne v\}$. By Theorem~\ref{separate}, for any distinct words $u,v\in\widetilde K_L$ there exist a $2$-topological space $\mathbf X_{u,v}=(X_{u,v},(\tau'_{u,v},\tau''_{u,v}))$ and a $*$-morphism $f_{u,v}:L\to L(\mathbf X)$ whose Kuratowski extension $\hat f_{u,v}:FS_{L\cup\{c\}}\to\IK_2(\mathbf X_{u,v})$ separates the words $u,v$ in the sense that $\hat f(u)\ne \hat f(v)$. This means that  $\hat f_{u,v}(u)(A_{u,v})\ne \hat f_{u,v}(v)(A_{u,v})$ for some subset $A_{u,v}\subset X_{u,v}$.
Let $\delta_{u,v}$ denote the discrete topology on the set $X_{u,v}$.

Define the $L_-$-topology $\tau_{u,v}:L_-\to\Top(X_{u,v})$ on $X_{u,v}$ by the formula
$$\tau_{u,v}(\ell)=\begin{cases}
\tau'_{u,v}&\mbox{ if $f(\ell)=\breve\tau'_{u,v}$};\\
\tau''_{u,v}&\mbox{ if $f(\ell)=\breve\tau''_{u,v}$};\\
\delta_{u,v} &\mbox{otherwise}.
\end{cases}
$$
Consider the $L_-$-topological space $\mathbf X^-_{u,v}=(X_{u,v},\tau_{u,v})$ and observe that its full Kuratowski monoid coincides with the full Kuratowski monoid of the 2-topological space $\mathbf X_{u,v}$. Moreover, the Kuratowski extension $\hat f_{u,v}:FS_{L\cup\{c\}}\to\IK_2(\mathbf X_{u,v})=\IK_2(\mathbf X^-_{u,v})$ of the morphism $f_{u,v}$ has the properties $\hat f_{u,v}(1)=1_{X_{u,v}}$, $\hat f_{u,v}(c)=c_{X_{u,v}}$, $\hat f_{u,v}(\ell)=\breve\tau_{u,v}(\ell)$, and $\hat f(\ell^*)=\bar\tau_{u,v}(\ell)$ for $\ell\in L_-$.

We lose no generality assuming that for any distinct pairs $(u,v),(u',v')\in\widetilde\K_L^2\setminus\Delta$ the sets $X_{u,v}$ and $X_{u',v'}$ are disjoint.
This allows us to consider the disjoint union
$$X=\bigcup\big\{X_{u,v}:(u,v)\in\widetilde \K_L^2\setminus\Delta\big\}$$
and the subset $$A=\bigcup \big\{A_{u,v}:(u,v)\in\widetilde \K_L^2\setminus\Delta\big\}$$ in $X$.
For every $\ell\in L_-$  consider the topology $\tau_\ell$ on the set $X$ generated by the base $\bigcup\{\tau_{u,v}(\ell):(u,v)\in\widetilde \K_L^2\setminus\Delta\}$. We claim that the $L_-$-topological space $\mathbf X=(X,\tau)$ (which is the direct sum of $L_-$-topological spaces $\mathbf X_{u,v}^-$) and the subset $A\subset X$ have the desired property: for any two distinct words $u,v\in\widetilde\K_L$ we get $\hat u(A)\ne \hat v(A)$, where $\hat u$ and $\hat v$ are the images of $u$ and $v$ under the (unique) semigroup homomorphism $\pi:FS_{L\cup\{c\}})\to\IK_2(\mathbf X)$ such that $\pi(1)=1_X$, $\pi(c)=c_X$, $\pi(\ell)=\breve\tau_\ell$, $\pi(\ell^*)=\bar\tau_\ell$ for $\ell\in L_-$. This follows from the fact that $\hat u(A)\cap X_{u,v}=\hat f_{u,v}(u)(A_{u,v})\ne\hat f_{u,v}(v)(A_{u,v})=\hat v(A)\cap X_{u,v}$.

This means that the restriction $\pi:\widetilde\K_L\to\IK_2(\mathbf X)$ is injective. By Theorem~\ref{represent}, $\pi(\K_L)=\IK(\mathbf X)$. Since $\IK_2(\mathbf X)=\IK(\mathbf X)\cup\{c_X\circ w:w\in\IK(\mathbf X)\}$, we get also that $\pi(\tilde\K_L)=\IK_2(\mathbf X)$. This means that the homomorphism $\pi$ maps bijectively the set $\K_L$ onto $\IK(\mathbf X)$ and the set $\widetilde\K_L$ onto $\IK_2(\mathbf X)$.

If the set $L_-$ has finite cardinality $n$, then the set $\K_L$ has cardinality $|\K_L|=K(n,n)=K(n)$ (according to Theorem~\ref{kuratwords}) and hence $|\IK(\mathbf X)|=|\K_L|=K(n,n)$ and $|\IK_2(\mathbf X)|=|\widetilde\K_L|=2\cdot K(n,n)=2\cdot K(n)$.
\end{proof}

\section{Free Kuratowski monoids}\label{s:free}

In this section we shall discuss free Kuratowski monoids over pointed linearly ordered sets. By a {\em pointed linearly ordered set} we understand a linearly ordered set $(L,\le)$ with a distinguished point $1\in L$ called the {\em unit} of $L$. The subsets $L_-=\{x\in L:x<1\}$ and $L_+=\{x\in L:1<x\}$ are called the {\em negative} and {\em positive} parts of $L$, respectively. A function $f:L\to \Lambda$ between two pointed linearly ordered sets is called a {\em morphism} if $f(1)=1$ and $f$ is monotone in the sense that $f(x)\le f(y)$ for any elements $x\le y$ of $L$.

Each pointed linearly ordered set $L$ determines the {\em free Kuratowski monoid $FK_L$} defined as follows. On the free semigroup $FS_L=\bigcup_{n=1}^\infty L^n$ consider the smallest compatible partial preorder $\preceq$ extending the linear order $\le$ of the set $L=L^1\subset FS_L$ and containing the pairs $(x,x1)$, $(x,1x)$, $(x1,x)$, $(1x,x)$, $(x,x^2)$, and $(x^2,x)$ for $x\in L$. The compatible partial preorder $\preceq$ generates the congruence $\rho_\preceq=\{(v,w)\in FS_L:v\preceq w,\;w\preceq v\}$ on $FS_L$ identifying the words $x,x1,1x,x^2$ for any $x\in L$. The quotient semigroup $FK_L=FS_L/\rho_\preceq$ endowed with the quotient partial order is called the {\em free Kuratowski monoid} generated by the pointed linearly ordered set $L$.
By $q_L:FS_L\to FK_L$ we shall denote the (monotone) quotient homomorphism. The restriction $\eta_L=q_L|L:L\to FK_L$ is called the canonical embedding of the pointed linearly ordered set $L$ into its free Kuratowski monoid. In Proposition~\ref{quot} we shall see that $\eta$ is indeed injective.

First we show that the free Kuratowski monoid $FK_L$ is free in the categorial sense.

\begin{proposition}\label{catfree} For any pointed linearly ordered set $L$ and any Kuratowski monoid $K$ with a linear generating set $\Lambda$ any morphism $f:L\to\Lambda$ determines a unique monotone semigroup homomorphism $\bar f:FK_L\to K$ such that $\bar f\circ\eta=f$.
\end{proposition}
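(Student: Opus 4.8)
The plan is to exploit the fact that $FK_L=FS_L/\rho_\preceq$ is, by construction, universal among the quotients of the free semigroup $FS_L$ that collapse the prescribed relations, so the task reduces to three moves: lift $f$ to a homomorphism out of $FS_L$, check this lift is monotone for $\preceq$, and then push it down through $q_L$.

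First I would use the freeness of $FS_L$ to extend $f:L\to\Lambda\subset K$ to the unique semigroup homomorphism $\phi:FS_L\to K$ with $\phi(x_1\cdots x_n)=f(x_1)\cdots f(x_n)$; note that $\phi|L=f$. The crucial step is then to show that $\phi$ is monotone, i.e. that $v\preceq w$ implies $\phi(v)\le\phi(w)$. For this I would consider the pullback relation $\le_\phi=\{(v,w)\in FS_L\times FS_L:\phi(v)\le\phi(w)\}$. Since the order of the partially ordered monoid $K$ is compatible with multiplication and $\phi$ is a homomorphism, $\le_\phi$ is a compatible preorder on $FS_L$. It extends the linear order of $L$ (if $x\le y$ in $L$ then $f(x)\le f(y)$, because $f$ is a morphism of pointed linearly ordered sets and the order of $\Lambda$ is the restriction of that of $K$), it contains the pairs $(x,x1),(x1,x),(x,1x),(1x,x)$ (since $f(1)=1$ is the unit of $K$, so $\phi(x1)=\phi(1x)=\phi(x)$), and it contains $(x,x^2),(x^2,x)$ (since every element of $\Lambda$ is an idempotent of $K$, so $\phi(x^2)=\phi(x)$). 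As $\preceq$ is by definition the \emph{smallest} compatible partial preorder with these properties, $\preceq\ \subseteq\ \le_\phi$, which is precisely the monotonicity of $\phi$.

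Next I would descend $\phi$ to $FK_L$. If $(v,w)\in\rho_\preceq$, then $v\preceq w$ and $w\preceq v$, hence $\phi(v)\le\phi(w)\le\phi(v)$, so $\phi(v)=\phi(w)$ by antisymmetry of the order of $K$; therefore $\phi$ is constant on $\rho_\preceq$-classes and factors as $\phi=\bar f\circ q_L$ for a unique map $\bar f:FK_L\to K$. Since $q_L$ is a surjective semigroup homomorphism, $\bar f$ is a semigroup homomorphism; since the order of $FK_L$ is the quotient of $\preceq$ and $\phi$ is monotone, $\bar f$ is monotone; and $\bar f\circ\eta=\bar f\circ(q_L|L)=\phi|L=f$. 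Uniqueness is then immediate: $\eta(L)=q_L(L)$ generates $FK_L$ as a semigroup (because $L$ generates $FS_L$ and $q_L$ is onto), so any monotone semigroup homomorphism that agrees with $f=\bar f\circ\eta$ on $\eta(L)$ must coincide with $\bar f$.

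The only part that is more than routine bookkeeping is the monotonicity of $\phi$, and there the points requiring care are exactly (i) that the pullback relation $\le_\phi$ is genuinely a \emph{compatible} preorder, so that the minimality (universal) property defining $\preceq$ applies, and (ii) that the linear order on the generating set $\Lambda$ is literally the restriction of the order of $K$, so that monotonicity of $f$ as a map into $\Lambda$ upgrades to monotonicity of $\phi$ as a map into $K$. Both are immediate from the definitions of a partially ordered monoid and of a Kuratowski monoid, so I do not expect any genuine obstacle; the rest is the standard ``quotient by the smallest congruence with prescribed relations'' argument.
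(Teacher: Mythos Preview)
Your proof is correct and follows essentially the same route as the paper: lift $f$ to the free semigroup, pull back the order of $K$ to a compatible preorder on $FS_L$, invoke the minimality of $\preceq$ to get $\preceq\subset\le_\phi$, and then pass to the quotient. Your write-up is in fact more explicit than the paper's (you spell out the verification of the pairs $(x,x1),(x,x^2)$, etc., and the uniqueness argument), but the strategy and the one nontrivial step are identical.
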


\begin{proof} Let $\tilde f:FS_L\to K$ be the unique semigroup homomorphism extending the morphism $f:L\to\Lambda\subset K$. The partial order $\le$ of the Kuratowski monoid $K$ induces the compatible partial preorder $\leqslant$ on $FS_L$ defined by $u\leqslant v$ iff $\tilde f(u)\le\tilde f(v)$. The monotonicity of $f$ implies that the partial preorder $\leqslant$ contains the linear order of the set $L$. Then the minimality of the partial preorder $\preceq$ implies that $\preceq\subset \leqslant$. This allows us to find a unique monotone semigroup homomorphism $\bar f:FK_L\to K$ such that $\bar f\circ q_L=\tilde f$ and hence $\bar f\circ \eta_L=\bar f\circ q_L|L=\tilde f|L=f$.
\end{proof}

\begin{proposition}\label{quot} For any pointed linearly ordered set $L$ the quotient homomorphism $q_L:FS_L\to FK_L$ maps bijectively the set $\K_L$ of Kuratowski words onto $FK_L$.
\end{proposition}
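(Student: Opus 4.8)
The plan is to prove the two inclusions separately: \emph{surjectivity} of $q_L$ on $\K_L$ by re-running the proof of Theorem~\ref{represent} with the congruence $\rho_\preceq$ in place of the congruence induced by $\pi$, and \emph{injectivity} by reducing to the case of a $*$-linearly ordered set, which is already handled by Corollary~\ref{separateL}.

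For surjectivity, note that $FK_L$ is a partially ordered monoid with unit $q_L(1)$ and that $q_L:FS_L\to FK_L$ induces exactly the congruence $\rho_\preceq$. I claim the whole argument of Theorem~\ref{represent} applies verbatim with $K$ replaced by $FK_L$. The only inputs that must be re-checked are the analogues of Lemmas~\ref{l1}--\ref{l4}. For $x,y\in L_-\cup\{1\}$ with, say, $x\le y$, multiplying $x\preceq y$ by $x$ and using $x^2\,\rho_\preceq\,x$ gives $q_L(x)\le q_L(xy)$, while multiplying $y\preceq 1$ by $x$ and using $x1\,\rho_\preceq\,x$ gives $q_L(xy)\le q_L(x)$; hence $q_L(xy)=q_L(\min\{x,y\})$, the analogue of Lemma~\ref{l1} (and dually of Lemma~\ref{l2}). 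The analogue of Lemma~\ref{l3} follows at once: a $\rho_\preceq$-irreducible word of length $\ge 2$ cannot have two consecutive letters both in $L_-\cup\{1\}$ or both in $L_+\cup\{1\}$ (in particular cannot contain the letter $1$), since such a pair could be collapsed to a single letter. Lemma~\ref{l4} and then Lemmas~\ref{l5}--\ref{l8} and the final five-case analysis use nothing beyond the monoid structure, compatibility of the order, and the relations ``$\itau$-type $\le 1\le\ctau$-type'' among letters, so they transfer unchanged. Thus every $\rho_\preceq$-irreducible word of $FS_L$ lies in $\K_L$; since $q_L$ is surjective and every $\rho_\preceq$-class contains an irreducible word, $q_L(\K_L)=FK_L$.

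For injectivity I first reduce to a $*$-linearly ordered set. Any pointed linearly ordered set $L$ embeds into one: let $\Lambda$ be the $*$-linearly ordered set whose negative part is $\Lambda_-=L_-\sqcup L_+^{\mathrm{op}}$ (with all of $L_-$ below all of $L_+^{\mathrm{op}}$, say), and let $\iota:L\to\Lambda$ be the identity on $L_-$, send $1$ to $1$, and send $x\in L_+$ to $(\bar x)^*\in\Lambda_+$, where $\bar x$ denotes the $L_+^{\mathrm{op}}$-copy of $x$ in $\Lambda_-$; then $\iota$ is injective, monotone, unit-preserving, and sign-preserving. Consequently the induced semigroup embedding $\iota_*:FS_L\to FS_\Lambda$ carries $\K_L$ injectively into $\K_\Lambda$ (alternating words, $V$-words and $W$-words are preserved because $\iota$ preserves signs, injectivity, and strict order), and it carries every generating pair of $\preceq$ on $FS_L$ to a generating pair of $\preceq$ on $FS_\Lambda$, so that $u\,\rho_\preceq\,v$ in $FS_L$ implies $\iota_*(u)\,\rho_\preceq\,\iota_*(v)$ in $FS_\Lambda$. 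Hence it suffices to show that distinct Kuratowski words in $\K_\Lambda$ are $\rho_\preceq$-inequivalent. For this, apply Corollary~\ref{separateL}: there is an $\Lambda_-$-topological space $\mathbf X$ and a semigroup homomorphism $\pi:FS_{\Lambda\cup\{c\}}\to\IK_2(\mathbf X)$ with $\pi(1)=1_X$, $\pi(c)=c_X$, $\pi(\ell)=\itau_\ell$, $\pi(\ell^*)=\ctau_\ell$ that maps $\K_\Lambda$ \emph{injectively} onto $\IK(\mathbf X)$; since on $FS_\Lambda$ the map $\pi$ is monotone on letters and sends $x1$, $1x$, $x^2$ to $\pi(x)$, and $\IK_2(\mathbf X)$ is a partially ordered monoid, $\pi$ identifies $\rho_\preceq$-equivalent words. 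Thus distinct words of $\K_\Lambda$ have distinct $\pi$-images and so are $\rho_\preceq$-inequivalent. Combining the three parts, $q_L$ restricts to a bijection of $\K_L$ onto $FK_L$.

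I expect the main obstacle to be not the surjectivity (which is a transcription of Theorem~\ref{represent}) nor the $*$-case (handed to Corollary~\ref{separateL}), but the reduction step: constructing the embedding $\iota$ of an arbitrary pointed linearly ordered set into a $*$-linearly ordered set and checking carefully that $\iota_*$ sends Kuratowski words to Kuratowski words and the preorder $\preceq$ to the preorder $\preceq$. This is elementary but is the one genuinely new ingredient; everything else is bookkeeping around results already established.
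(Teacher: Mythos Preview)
Your proof is correct and follows essentially the same route as the paper: surjectivity via Theorem~\ref{represent} (the paper simply cites it, since $FK_L$ is itself a Kuratowski monoid), and injectivity by embedding $L$ into a $*$-linearly ordered set and invoking Corollary~\ref{separateL}. The paper packages the injectivity step as a commutative diagram using Proposition~\ref{catfree} rather than your direct verification that $\iota_*$ preserves $\preceq$ and Kuratowski words, but these are the same argument in different clothing.
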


\begin{proof} Theorem~\ref{represent} implies that $q_L(\K_L)=FK_L$. To show that $q_L|\K_L$ is injective, we shall apply Theorem~\ref{separateL}. Choose any injective morphism $e:L\to L^*$ of the pointed linearly ordered set $L$ into a $*$-linearly ordered set $L^*$.
Let $\tilde e:FS_L\to FS_{L^*}$ be the unique semigroup homomorphism extending the map $e$.
The injectivity of $e$ implies the injectivity of the homomorphism $\tilde e$.

By Proposition~\ref{catfree}, the morphism $e$ determines a unique monotone semigroup homomorphism $\bar e:FK_L\to FK_{L^*}$ such that $\bar e\circ\eta_L=\eta_{L^*}\circ e$. By Theorem~\ref{separateL}, there exists a polytopological space $\mathbf X$ and a $*$-morphism $f:L^*\to L(\mathbf X)$ whose Kuratowski extension $\hat f:FS_{L^*}\to \IK(\mathbf X)$ maps bijectively the set $\K_{L^*}$ onto $\IK(\mathbf X)$. By Proposition~\ref{catfree}, the $*$-morphism $f:L^*\to L(\mathbf X)\subset \IK(\mathbf X)$ determines a  (unique) monotone semigroup homomorphism $\bar f:FS_{L^*}\to\IK(\mathbf X)$ such that $\bar f\circ\eta_{L^*}=f$.
Thus we obtain the commutative diagram
$$\xymatrix{
L\ar[r]^e\ar[d]&L^*\ar[d]\\
\K_L\ar[r]^{\tilde e|\K_L}\ar[d]&\K_{L^*}\ar[d]\\
FS_L\ar[r]^{\tilde e}\ar[d]_{q_L}&FS_{L^*}\ar[d]^{q_{L^*}}\ar[rd]^{\hat f}\\
FK_L\ar[r]_{\bar e}&FK_{L^*}\ar[r]_{\bar f}&\IK(\mathbf X)
}
$$
in which the map $\hat f\circ\tilde e|\K_L$ is injective. Since $\hat f\circ\tilde e=\bar f\circ\bar e\circ q_L$, the injectivity of the map $\hat f\circ\tilde e|\K_L$ implies the injectivity of the map $q_L|\K_L$. Since $q_L(\K_L)=FK_L$, the restriction $q_L|\K_L:\K_L\to FK_L$ is bijective.
\end{proof}

Now we prove that the congruence $\rho_\preceq$ on $FS_L$ determining the free Kuratowski monoid can be equivalently defined in a more algebraic fashion.

\begin{proposition} For any pointed linearly ordered set $L$ the congruence $\rho_{\preceq}$ on the free semigroup $FS_L$ coincides with the smallest congruence $\rho$ on $FS_L$ containing the
pairs $(x,x1)$, $(x,1x)$, and $(x,x^2)$  for every $x\in L$ and the pairs $(x_1y_1x_2y_2,x_1y_2)$, $(y_2x_2y_1x_1,y_2x_1)$ for any points $x_1,x_2,y_1,y_2\in L$ with $x_1\le x_2\le 1\le y_1\le y_2$.
\end{proposition}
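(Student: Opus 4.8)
The plan is to establish the two inclusions $\rho\subseteq\rho_\preceq$ and $\rho_\preceq\subseteq\rho$ separately, deriving the second one from Proposition~\ref{quot}.

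For $\rho\subseteq\rho_\preceq$ I would check that every generating pair of $\rho$ lies in $\rho_\preceq$. The pairs $(x,x1)$, $(x,1x)$, $(x,x^2)$ occur in $\preceq$ together with their reverses, so they lie in $\rho_\preceq$ at once. For a four-letter pair $(x_1y_1x_2y_2,x_1y_2)$ with $x_1\le x_2\le1\le y_1\le y_2$ I would exhibit the cycle of $\preceq$-comparisons
$$x_1y_1x_2y_2\;\preceq\;x_1y_1y_2\;\preceq\;x_1y_2y_2\;\preceq\;x_1y_2\;\preceq\;x_1x_1y_2\;\preceq\;x_1x_2y_2\;\preceq\;x_1\,1\,x_2y_2\;\preceq\;x_1y_1x_2y_2,$$
each step being an instance of compatibility of $\preceq$ with one of the generating relations of $\preceq$ (the unit and idempotency relations $x1\preceq x$, $x\preceq x^2$, $x^2\preceq x$, $x_2\preceq 1x_2$, and the order of $L$ extended to $\preceq$, used here through $x_2\preceq1$, $1\preceq y_1$, $x_1\preceq x_2$, $y_1\preceq y_2$). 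Since the cycle returns to its start, all its members, in particular $x_1y_1x_2y_2$ and $x_1y_2$, are identified by $\rho_\preceq$. The pair $(y_2x_2y_1x_1,y_2x_1)$ is the image of $(x_1y_1x_2y_2,x_1y_2)$ under the word-reversal anti-automorphism of $FS_L$, which preserves $\preceq$, so it lies in $\rho_\preceq$ as well. Hence $\rho\subseteq\rho_\preceq$.

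For the reverse inclusion, note that $\rho\subseteq\rho_\preceq$ yields a surjective monoid homomorphism $\phi\colon FS_L/\rho\to FS_L/\rho_\preceq=FK_L$ with $\phi\circ q=q_L$, where $q\colon FS_L\to FS_L/\rho$ is the quotient map. It then suffices to prove that the restriction $q|\K_L$ is surjective, i.e. that \emph{every word of $FS_L$ is $\rho$-equivalent to a Kuratowski word}: for then $\phi\circ(q|\K_L)=q_L|\K_L$ is a bijection by Proposition~\ref{quot}, which forces $q|\K_L$ to be injective, hence bijective, and a short diagram chase then gives that $\phi$ is injective, so $\rho=\rho_\preceq$. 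To see that every word is $\rho$-equivalent to a Kuratowski word, I would take a word $w_0$ of minimal length in its $\rho$-class and show $w_0\in\K_L$ by rerunning the proof of Theorem~\ref{represent} with the congruence $\rho$ in place of the kernel congruence used there. That proof only ever performs reductions of three kinds: deleting an occurrence of the letter $1$; collapsing two adjacent letters from the same side of $L$ into their minimum or maximum (Lemmas~\ref{l1}--\ref{l2}); and the four-letter reduction of Lemma~\ref{l4}. The first kind is the relation $(x,x1)$/$(x,1x)$ of $\rho$. The second kind follows from the four-letter relations of $\rho$ by setting two of the four letters equal to $1$: for $x_1\le x_2\le1$ the instance $(x_1\,1\,x_2\,1,\,x_1\,1)$ of the first family gives $(x_1x_2,x_1)\in\rho$, and the corresponding instance of the second family gives $(x_2x_1,x_1)\in\rho$, so $ab$ is $\rho$-equivalent to $\min\{a,b\}$ and, dually, to $\max\{a,b\}$ for same-side letters. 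The third kind is, after renaming the letters, literally one of the two four-letter relations of $\rho$, the hypotheses of Lemma~\ref{l4} translating into exactly the inequalities $x_1\le x_2\le1\le y_1\le y_2$. Thus every reduction step in the proof of Theorem~\ref{represent} is a $\rho$-move, so minimality of $|w_0|$ forces $w_0$ to be alternating and then, via Lemmas~\ref{l5}--\ref{l8} and the final five-case analysis, forces $w_0$ to be a Kuratowski word.

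The step I expect to be the real obstacle is this last matching: one must verify that every four-letter reduction that actually occurs in the proof of Theorem~\ref{represent} (the ones extracted via Lemmas~\ref{l5}--\ref{l8} and in case 5c) coincides, after a renaming of letters, with one of the generating pairs $(x_1y_1x_2y_2,x_1y_2)$ or $(y_2x_2y_1x_1,y_2x_1)$ with $x_1\le x_2\le1\le y_1\le y_2$, and hence lies in $\rho$ and not merely in $\rho_\preceq$. Once this dictionary between the monotonicity hypotheses of Lemmas~\ref{l1}--\ref{l4} and the inequalities defining the relations of $\rho$ is made explicit, the remainder of the argument is routine.
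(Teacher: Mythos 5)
Your proposal is correct and follows essentially the same route as the paper: the inclusion $\rho\subseteq\rho_\preceq$ is checked on generators (your explicit $\preceq$-cycle is just an unpacked version of the paper's "repeat the proof of Lemma~\ref{l4}"), and the reverse inclusion is obtained by factoring $q_L$ through $FS_L/\rho$, observing that every reduction in the proof of Theorem~\ref{represent} is a $\rho$-move so that $\K_L$ surjects onto $FS_L/\rho$, and then invoking the bijectivity of $q_L|\K_L$ from Proposition~\ref{quot}. Your dictionary matching Lemma~\ref{l4}'s two cases to the two four-letter generating families (and recovering Lemmas~\ref{l1}--\ref{l2} by specializing letters to $1$) is exactly the verification the paper leaves implicit.
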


\begin{proof} First we prove that $\rho\subset\rho_\preceq$. The definition of the partial preorder
$\preceq$ implies that $\{(x,x1),(x,1x),(x,x^2):x\in L\}\subset\rho_\preceq$. Repeating the proofs of Lemma~\ref{l4},  we can show that $$\{(x_1y_1x_2y_2,x_1y_2),(y_2x_2y_1x_1,y_2x_1):x_1,x_2,y_2,y_2\in L,\;x_1\le x_2\le 1\le y_1\le y_2\}\subset \rho_\preceq.$$
Now the minimality of the congruence $\rho$ implies that $\rho\subset\rho_\preceq$.

Denote by $\rho^\natural:FS_L\to FS_L/\rho$ and $q_L:FS_L\to FS_L/\rho_\preceq=FK_L$ the quotient homomorphisms. Since $\rho\subset\rho_\preceq$, there is a unique homomorphism $h:FS_L/\rho\to FK_L$ making the following diagram commutative:
$$\xymatrix{
&\K_L\ar[d]^<<<<i\\
&FS_L\ar[ld]_{\rho^\natural}\ar[rd]^{q_L}\\
FS_L/\rho\ar[rr]_h&&FK_L}
$$
In this diagram by $i:\K_L\to FS_L$ we denote the identity inclusion of the set $\K_L$ of Kuratowski words into the free semigroup $FS_L$. The proof of Theorem~\ref{represent} implies that $\rho^\natural(\K_L)=FS_L/\rho$.

On the other hand, Proposition~\ref{quot} guarantees that the restriction $q_L|\K_L:\K_L\to FK_L$ is bijective. This implies that the homomorphism $h$ is bijective and hence $\rho_\preceq=\rho$.
\end{proof}

The bijectivity of the restriction $q_L|\K_L:\K_L\to FK_L$ and Theorem~\ref{kuratwords} imply:

\begin{corollary} For any finite pointed linearly ordered set $L$ the free Kuratowski monoid $FK_L$ has cardinality $$|FK_L|=|\K_L|=K(n,p)=\sum_{i=0}^{n}\sum_{j=0}^p\textstyle{\binom{i+j}{i}\binom{i+j}{j}}$$where $n=|L_-|$ and $p=|L_+|$.
\end{corollary}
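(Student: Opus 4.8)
The plan is to deduce the corollary directly from the two results that immediately precede it, so the argument is essentially a two-step concatenation of equalities. First I would recall that, by Proposition~\ref{quot}, the quotient homomorphism $q_L:FS_L\to FK_L$ restricts to a \emph{bijection} from the set $\K_L$ of Kuratowski words onto the free Kuratowski monoid $FK_L$. This already yields the first equality $|FK_L|=|\K_L|$, and it holds for an arbitrary pointed linearly ordered set $L$, not merely a finite one.

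Next, assuming $L$ finite with $n=|L_-|$ and $p=|L_+|$, I would invoke Theorem~\ref{kuratwords}(5), which evaluates the cardinality of $\K_L$ in closed form as
$$|\K_L|=\sum_{a=0}^{n}\sum_{b=0}^{p}\binom{a+b}{a}\binom{a+b}{b}=K(n,p).$$
Chaining the two equalities gives $|FK_L|=|\K_L|=K(n,p)$, which is exactly the asserted formula (the indices $i,j$ in the statement being just a relabelling of the dummy variables $a,b$). So the whole proof collapses to a single line citing Proposition~\ref{quot} and Theorem~\ref{kuratwords}.

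I expect no genuine obstacle here: all the substantive work has already been done, the combinatorial enumeration in Theorem~\ref{kuratwords} and the injectivity half of Proposition~\ref{quot} (which itself rests on Theorem~\ref{represent} and Theorem~\ref{separateL}, the separation of distinct Kuratowski words by homomorphisms into Kuratowski monoids of $2$-topological spaces). The only point worth flagging is that finiteness of $L$ is precisely what forces $\K_L$ to be finite — alternating Kuratowski words have length bounded in terms of $n+p$ — and hence what makes the closed-form count applicable; for infinite $L$ one still has $|FK_L|=|\K_L|$, but now with an infinite value. Accordingly, I would simply state the corollary as an immediate consequence of the bijectivity of $q_L|\K_L$ together with Theorem~\ref{kuratwords}, with no further computation needed.
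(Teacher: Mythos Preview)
Your proposal is correct and matches the paper's approach exactly: the paper introduces this corollary with the single sentence ``The bijectivity of the restriction $q_L|\K_L:\K_L\to FK_L$ and Theorem~\ref{kuratwords} imply:'' and gives no further proof. Your two-step concatenation of Proposition~\ref{quot} and Theorem~\ref{kuratwords}(5) is precisely what is intended.
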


Given two non-negative natural numbers $n,p$, fix any pointed linearly ordered set $L_{n,p}$ with $|(L_{n,p})_-|=n$ and $|(L_{n,p})_+|=p$ and denote the free Kuratowski monoid $FK_{L_{n,p}}$ by $FK_{n,p}$.

\begin{proposition}\label{FK22} For any pointed linearly ordered set $L$ and any distinct elements $x,y\in FK_L$ there is a morphism of pointed linearly ordered sets $f:L\to L_{2,2}$ such that $\bar f(x)\ne \bar f(y)$. This implies that $FK_L$ embeds into some power of the free Kuratowski monoid $FK_{2,2}$.
\end{proposition}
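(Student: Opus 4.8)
The plan is to imitate the proof of Proposition~\ref{quot}, but to arrange that the polytopological space produced by Theorem~\ref{separate} carries only two topologies, so that its Kuratowski monoid is a homomorphic image of $FK_{2,2}$. First I would fix distinct $x,y\in FK_L$ and, using Proposition~\ref{quot}, pick distinct Kuratowski words $u,v\in\K_L$ with $q_L(u)=x$ and $q_L(v)=y$. Since $L$ need not carry an involution, I would then embed it into a $*$-linearly ordered set: choosing a linearly ordered set $M$ containing an order-copy of $L_-$ and an order-copy of $L_+$ taken with the reversed order, one obtains a $*$-linearly ordered set $L^*$ with negative part $M$ together with an injective morphism of pointed linearly ordered sets $e:L\to L^*$ with $e(L_-)\subset L^*_-$ and $e(L_+)\subset L^*_+$. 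Let $\tilde e:FS_L\to FS_{L^*}$ be the induced (injective) semigroup homomorphism. Because $e$ is injective, monotone and sign-preserving, $\tilde e$ carries alternating words to alternating words and strictly monotone subsequences to strictly monotone subsequences, so $\tilde e(\K_L)\subset\K_{L^*}$; in particular $\tilde e(u)$ and $\tilde e(v)$ are distinct elements of $\widetilde\K_{L^*}$.

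Next I would apply Theorem~\ref{separate} to the pair $\tilde e(u),\tilde e(v)$, obtaining a $2$-topological space $\mathbf X$ and a $*$-morphism $g:L^*\to L(\mathbf X)$ whose Kuratowski extension $\hat g$ satisfies $\hat g(\tilde e(u))\ne\hat g(\tilde e(v))$. Since $\mathbf X$ has only two topologies, the linear generating set $L(\mathbf X)$ of $\IK(\mathbf X)$ satisfies $|L(\mathbf X)_-|\le2$ and $|L(\mathbf X)_+|\le2$, so there is an injective morphism of pointed linearly ordered sets $h:L(\mathbf X)\to L_{2,2}$ and, because $h$ is an order-embedding of a finite pointed linear order, a monotone unit-preserving retraction $r:L_{2,2}\to L(\mathbf X)$ with $r\circ h=\mathrm{id}_{L(\mathbf X)}$; in the degenerate cases $|L(\mathbf X)_\pm|<2$ some generators of $L_{2,2}$ are sent by $r$ to the unit. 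I would then set $f=h\circ g\circ e:L\to L_{2,2}$, which is a morphism of pointed linearly ordered sets.

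It remains to verify that the homomorphism $\bar f:FK_L\to FK_{2,2}$ supplied by Proposition~\ref{catfree} separates $x$ and $y$. Proposition~\ref{catfree}, applied to the morphisms $g\circ e:L\to L(\mathbf X)$ and $r:L_{2,2}\to L(\mathbf X)$, yields monotone semigroup homomorphisms $\overline{g\circ e}:FK_L\to\IK(\mathbf X)$ and $\bar r:FK_{2,2}\to\IK(\mathbf X)$. The composite $\bar r\circ\bar f$ is a monotone semigroup homomorphism with $\bar r\circ\bar f\circ\eta_L=\bar r\circ\eta_{L_{2,2}}\circ f=r\circ f=(r\circ h)\circ(g\circ e)=g\circ e$, so by the uniqueness part of Proposition~\ref{catfree} it equals $\overline{g\circ e}$. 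Likewise $\overline{g\circ e}\circ q_L$ and $\hat g\circ\tilde e$ are semigroup homomorphisms $FS_L\to\IK(\mathbf X)$ that restrict to $g\circ e$ on $L$, hence they coincide, so $\overline{g\circ e}(x)=\hat g(\tilde e(u))\ne\hat g(\tilde e(v))=\overline{g\circ e}(y)$; since $\overline{g\circ e}=\bar r\circ\bar f$, this forces $\bar f(x)\ne\bar f(y)$. Finally, choosing such a morphism $f_{x,y}$ for every non-diagonal pair $(x,y)\in FK_L\times FK_L$ and taking the product of the homomorphisms $\bar f_{x,y}:FK_L\to FK_{2,2}$ produces an injective monoid homomorphism of $FK_L$ into a power of $FK_{2,2}$.

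I expect the main effort to be the bookkeeping around these free constructions: checking $\tilde e(\K_L)\subset\K_{L^*}$, producing $h$ together with its retraction $r$ (handling the degenerate cases where $L(\mathbf X)$ has fewer than two negative or positive elements), and chasing the diagram that identifies $\bar r\circ\bar f$ with $\overline{g\circ e}$ and the latter with $\hat g\circ\tilde e$ through the uniqueness clause of Proposition~\ref{catfree}.
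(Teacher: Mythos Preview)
Your argument is correct and follows essentially the same route as the paper: pick Kuratowski words representing $x,y$ via Proposition~\ref{quot}, embed $L$ into a $*$-linearly ordered set $L^*$, invoke Theorem~\ref{separate} to get a $2$-topological space $\mathbf X$ and a separating $*$-morphism $g:L^*\to L(\mathbf X)$, and then factor through $L_{2,2}$.

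The one noteworthy difference is how you pass from $L(\mathbf X)$ to $L_{2,2}$. The paper simply asserts, citing the proof of Theorem~\ref{separate}, that $L(\mathbf X)$ is $*$-isomorphic to $L_{2,2}$ and uses the inverse of that isomorphism. You instead allow $|L(\mathbf X)_\pm|\le 2$, choose an order-embedding $h:L(\mathbf X)\hookrightarrow L_{2,2}$ together with a monotone retraction $r$, and verify via the uniqueness clause of Proposition~\ref{catfree} that $\bar r\circ\bar f=\overline{g\circ e}$. This is in fact a bit more careful: in several branches of the proof of Theorem~\ref{separate} (for instance the cases using $\mathbf X=(\{x,y\},(\tau_a,\tau_d))$) one of the two topologies is discrete, so $\breve\tau_1=1_X=\bar\tau_1$ and $L(\mathbf X)$ genuinely has fewer than five elements; your embedding--retraction device handles these degenerate cases cleanly, whereas the paper's phrasing glosses over them.
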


\begin{proof} Enlarge the pointed linearly ordered set $L$ to a $*$-linearly ordered set $L^*$ and denote by $e:L\to L^*$ the identity embedding. Let $\tilde e:FS_L\to FS_{L^*}$ be the unique semigroup homomorphism extending $e$. It is clear that $\tilde e$ is an injective map.

By Proposition~\ref{quot}, the restriction $q_L|\K_L:\K_L\to FK_L$ is bijective. So, we can find Kuratowski words $u,v\in\K_L$ such that $q_L(u)=x$ and $q_L(v)=y$.
By Theorem~\ref{separate}, for the Kuratowski words $u,v\in\K_L\subset \K_{L^*}$ there exist a 2-topological space $\mathbf X$ and a $*$-morphism $g:L^*\to L(\mathbf X)$ such that $\hat g(u)\ne\hat g(v)$ where $\hat g:FS_{L^*}\to \IK(\mathbf X)$ is the unique semigroup homomorphism extending the $*$-morphism $g$. Moreover, the proof of Theorem~\ref{separate} guarantees that the linear generating set $L(\mathbf X)$ of the 2-topological space $X$ is isomorphic to the $*$-linearly ordered set $L_{2,2}$. So, there exists a (unique) bijective $*$-morphism $\iota:L_{2,2}\to L(\mathbf X)$. Let $f=\iota^{-1}\circ g:L^*\to L_{2,2}$. The commutativity of the following diagram
$$\xymatrix{
L\ar[r]^e\ar[d]&L^*\ar[d]\ar[r]_f\ar@/^20pt/[rr]^g&L_{2,2}\ar[r]_\iota&L(\mathbf X)\ar[dd]\\
\K_L\ar[r]^{\tilde e|\K_L}\ar[d]&\K_{L^*}\ar[d]\\
FS_L\ar[d]_{q_L}\ar[r]^{\tilde e}&FS_{L^*}\ar@/^20pt/[rr]^{\hat g} \ar[d]_{q_{L^*}}\ar[r]_<<<<{\hat f}&FK_{2,2}\ar[r]_{\bar\iota}&\IK(\mathbf X)\\
FK_L\ar[r]^{\bar e}&FK_{L^*}\ar[ru]_{\bar f}
}
$$
and the inequality $\hat g\circ\tilde e(u)\ne\hat g\circ\tilde e(v)$ imply that $\bar f(x)\ne\bar f(y)$.
\end{proof}

In fact, Proposition~\ref{FK22} can be improved as follows.

\begin{proposition}\label{FK21} For any pointed linearly ordered set $L$ and any distinct elements $x,y\in FK_L$ there is a pair $(n,p)\in\{(1,2),(2,1)\}$ and a morphism of pointed linearly ordered sets $f:L\to L_{n,p}$ such that $\bar f(x)\ne \bar f(y)$. This implies that $FK_L$ embeds into some power of the partially ordered monoid $FK_{1,2}\times FK_{2,1}$.
\end{proposition}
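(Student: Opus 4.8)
\section*{Proof proposal}

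The plan is to deduce Proposition~\ref{FK21} from Proposition~\ref{FK22} by a separation argument carried out inside the single free Kuratowski monoid $FK_{2,2}$. By Proposition~\ref{FK22}, for any two distinct elements $x,y\in FK_L$ there is a morphism $g:L\to L_{2,2}$ with $\bar g(x)\ne\bar g(y)$; so it suffices to prove that for any two distinct elements $\xi,\eta\in FK_{2,2}$ there are a pair $(n,p)\in\{(1,2),(2,1)\}$ and a morphism $h:L_{2,2}\to L_{n,p}$ with $\bar h(\xi)\ne\bar h(\eta)$. Then $h\circ g:L\to L_{n,p}$ separates $x$ and $y$, and taking the product of such separating morphisms over all pairs of distinct elements of $FK_L$ produces an injective homomorphism of $FK_L$ into a product of copies of $FK_{1,2}$ and $FK_{2,1}$; this product in turn embeds into a power of $FK_{1,2}\times FK_{2,1}$ via the coordinatewise maps $a\mapsto(a,1)$ (on the $FK_{1,2}$-factors) and $a\mapsto(1,a)$ (on the $FK_{2,1}$-factors), which are monotone monoid embeddings.

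Thus the whole proof reduces to showing that the finite family $\Phi$ of all morphisms $L_{2,2}\to L_{1,2}$ and $L_{2,2}\to L_{2,1}$ separates the $K(2,2)=63$ elements of $FK_{2,2}$. Here I would use Proposition~\ref{quot} to identify $FK_{2,2}$, $FK_{1,2}$ and $FK_{2,1}$ with the sets of Kuratowski words $\K_{L_{2,2}}$, $\K_{L_{1,2}}$ and $\K_{L_{2,1}}$, so that evaluating $\bar h$ amounts to applying $h$ letterwise to a Kuratowski word and then reducing the result to a Kuratowski word over the smaller alphabet using the relations $x\sim x1\sim 1x\sim x^2$ and the $V$-reduction $x_1y_1x_2y_2\sim x_1y_2$ valid for $x_1\le x_2\le 1\le y_1\le y_2$. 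Given distinct Kuratowski words $w,w'\in\K_{L_{2,2}}$, write them in the right-to-left indexing used in the proof of Theorem~\ref{separate} and let $k$ be the least index at which they disagree. Mirroring that proof, if the $k$-th letters lie in $L_-$ one first tries the morphism in $\Phi$ onto $L_{2,1}$ that is injective on $L_-$ and collapses $L_+$ to a single element (dually if they lie in $L_+$); and when such a collapse happens to erase the $k$-th discrepancy through a $V$-reduction, one instead uses the morphism from $\Phi$ that sends the larger generator on the relevant side to the unit $1$, thereby cutting the word apart at the offending place so that the discrepancy survives the reduction.

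I expect the main obstacle to be precisely this verification that the family $\Phi$ suffices. Unlike in the proof of Theorem~\ref{separate}, where one is free to evaluate operators on a cleverly chosen subset of a $2$- or $3$-point space, here one must argue purely inside the abstract monoids $FK_{1,2}$ and $FK_{2,1}$, in which the $V$-reduction of Lemma~\ref{l4} can swallow a distinction between $h(w)$ and $h(w')$ that was visible at the level of words; in particular the two ``obvious'' morphisms (collapse $L_+$, respectively collapse $L_-$) do not by themselves separate all $63$ elements, which is why the full family $\Phi$ is needed. One therefore has to run through the possibilities according to whether $w$ and $w'$ are $V_\mp$-, $V_\pm$-, $W_-$- or $W_+$-words and where and on which side their first discrepancy sits, checking in each case that some member of $\Phi$ survives all reductions. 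Since $\K_{L_{2,2}}$ and $\Phi$ are explicit and finite, this is a finite, somewhat tedious but routine case analysis, and — as with the table of values of $K(n,p)$ — it can be confirmed by a short computer computation. Once the separation over $FK_{2,2}$ is established, the embedding $FK_L\hookrightarrow(FK_{1,2}\times FK_{2,1})^{N}$ for a suitable $N$ follows formally, as explained in the first paragraph.
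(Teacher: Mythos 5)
Your proposal follows essentially the same route as the paper's proof: both use Proposition~\ref{FK22} to reduce the problem to separating the $63$ elements of $FK_{2,2}$, identify the free Kuratowski monoids with their sets of Kuratowski words via Proposition~\ref{quot}, and then establish separation by a finite family of monotone morphisms onto $L_{1,2}$ and $L_{2,1}$, with the embedding into a power of $FK_{1,2}\times FK_{2,1}$ following formally. The only difference is that the paper commits to four explicit morphisms $h_{12},h_{23},h_{34},h_{45}$ (each collapsing one adjacent pair of the five-element chain $L_{2,2}$) and tabulates all $63$ image quadruples to exhibit the separation, whereas you leave this finite verification as a routine case analysis or computer check, which is acceptable in outline but is precisely the step that must ultimately be carried out.
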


\begin{proof} Because of Proposition~\ref{FK22}, it suffices to prove that the points of the free Kuratowski monoid $FK_{2,2}$ can be separated by monotone homomorphisms into the Kuratowski monoids $FK_{1,2}$ and $FK_{1,2}$. Write the $*$-linearly ordered set $L_{2,2}$ as $L_{2,2}=\{\breve \tau_0,\breve\tau_1,1,\bar\tau_1,\bar\tau_0\}$ for some elements $$\breve \tau_0<\breve\tau_1<1<\bar\tau_1<\bar\tau_0.$$
By analogy the pointed linearly ordered sets $L_{1,2}$ and $L_{2,1}$ can be written as $L_{1,2}=\{\breve\tau,1,\bar\tau_1,\bar\tau_0\}$ and $L_{2,1}=\{\breve\tau_0,\breve\tau_1,1,\bar\tau\}$.
Consider the four surjective monotone morphisms $$h_{12}:L_{2,2}\to L_{1,2},\;h_{23}:L_{2,2}\to L_{1,2},\;h_{34}:L_{2,2}\to L_{2,1}\mbox{ and }h_{45}:L_{2,2}\to L_{2,1}$$ such that $$h_{12}(\breve\tau_0)=h_{12}(\breve\tau_1)=\breve\tau,\;h_{23}(\breve\tau_1)=h_{23}(1)=1,\; h_{34}(1)=h_{34}(\bar\tau_1)=1, \mbox{ \ and \ }h_{45}(\bar\tau_1)=h_{45}(\bar\tau_0)=\bar\tau.$$

By Proposition~\ref{quot}, the free Kuratowski monoid $FK_{2,2}$ can be identified with the 63-element set $\K_{L_{2,2}}$ of Kuratowski words in the alphabet $L_{2,2}$:
$$\boxed{\begin{aligned}
&1,\;\;\itau_0,\;\itau_1,\;\ctau_0,\;\ctau_1,\\
&\itau_0\ctau_0,\;\itau_0\ctau_1,\;\itau_1\ctau_0,\;\itau_1\ctau_1,\;
\ctau_0\itau_0,\;\ctau_0\itau_1,\;\ctau_1\itau_0,\;\ctau_1\itau_1,\;\\
&\itau_0\ctau_0\itau_0,\; \itau_0\ctau_0\itau_1,\; \itau_0\ctau_1\itau_0,\;\itau_0\ctau_1\itau_1,\;
\itau_1\ctau_0\itau_0,\; \itau_1\ctau_0\itau_1,\; \itau_1\ctau_1\itau_0,\;\itau_1\ctau_1\itau_1,\;\\
&\ctau_0\itau_0\ctau_0,\; \ctau_0\itau_0\ctau_1,\; \ctau_0\itau_1\ctau_0,\; \ctau_0\itau_1\ctau_1,\;
\ctau_1\itau_0\ctau_0,\; \ctau_1\itau_0\ctau_1,\; \ctau_1\itau_1\ctau_0,\; \ctau_1\itau_1\ctau_1,\;\\
&\itau_0\ctau_0\itau_0\ctau_1,\;\itau_0\ctau_0\itau_1\ctau_1,\;
\itau_1\ctau_0\itau_0\ctau_0,\;\itau_1\ctau_0\itau_0\ctau_1,\;\itau_1\ctau_0\itau_1\ctau_1,\;
\itau_1\ctau_1\itau_0\ctau_0,\;\itau_1\ctau_1\itau_0\ctau_1,\\
&\ctau_0\itau_0\ctau_0\itau_1,\;\ctau_0\itau_0\ctau_1\itau_1,\;
\ctau_1\itau_0\ctau_0\itau_0,\;\ctau_1\itau_0\ctau_0\itau_1,\;\ctau_1\itau_0\ctau_1\itau_1,\;
\ctau_1\itau_1\ctau_0\itau_0,\;\ctau_1\itau_1\ctau_0\itau_1,\\
&\itau_1\ctau_0\itau_0\ctau_0\itau_1,\; \itau_1\ctau_1\itau_0\ctau_1\itau_1,\;
\itau_0\ctau_0\itau_0\ctau_1\itau_1,\; \itau_1\ctau_1\itau_0\ctau_0\itau_0,\; \itau_1\ctau_0\itau_0\ctau_1\itau_1,\;\itau_1\ctau_1\itau_0\ctau_0\itau_1,\; \\
&\ctau_1\itau_0\ctau_0\itau_0\ctau_1,\; \ctau_1\itau_1\ctau_0\itau_1\ctau_1,\;
\ctau_0\itau_0\ctau_0\itau_1\ctau_1,\; \ctau_1\itau_1\ctau_0\itau_0\ctau_0,\; \ctau_1\itau_0\ctau_0\itau_1\ctau_1,\;\ctau_1\itau_1\ctau_0\itau_0\ctau_1,\;\\
&\itau_1\ctau_1\itau_0\ctau_0\itau_1\ctau_1,\; \itau_1\ctau_0\itau_0\ctau_0\itau_1\ctau_1,\;\itau_1\ctau_1\itau_0\ctau_0\itau_0\ctau_1,\\
&\ctau_1\itau_1\ctau_0\itau_0\ctau_1\itau_1,\; \ctau_1\itau_0\ctau_0\itau_0\ctau_1\itau_1,\;\ctau_1\itau_1\ctau_0\itau_0\ctau_0\itau_1,\;\\
&\itau_1\ctau_1\itau_0\ctau_0\itau_0\ctau_1\itau_1,\;\; \ctau_1\itau_1\ctau_0\itau_0\ctau_0\itau_1\ctau_1.
\end{aligned}
}$$

In the following two lists we write the pairs $(h_{12}(w),h_{45}(w))$ and $(h_{23}(w),h_{34}(w))$ for the Kuratowski words $w$ from the above list. Analyzing these two lists we can see that the quadruples
$(h_{12}(w),h_{45}(w),h_{23}(w),h_{34}(w))$, $w\in\K_{L_{2,2}}$, are pairwise distinct, which means that the elements of the free Kuratowski monoid $FK_{2,2}$ are separated by the homomorphism $(h_{12},h_{23},h_{34},h_{45}):L_{2,2}\to L_{1,2}^2\times L_{2,1}^2$.

$$\boxed{\small
\begin{aligned}
&\hskip150pt\mbox{The homomorphism $(h_{12},h_{45})$:}\\
&(1,1),\;\;(\itau,\itau_0),\;(\itau,\itau_1),\;(\ctau_0,\ctau)\;(\ctau_1,\ctau),\\
&(\itau\ctau_0,\itau_0\ctau)\;(\itau\ctau_1,\itau_0\ctau),\;(\itau\ctau_0,\itau_1\ctau),\;(\itau\ctau_1,\itau_1\ctau),\;
(\ctau_0\itau,\ctau\itau_0),\;(\ctau_0\itau,\ctau\itau_1),\;(\ctau_1\itau,\ctau\itau_0),\;(\ctau_1\itau,\ctau\itau_1),\;\\
&(\itau\ctau_0\itau,\itau_0\ctau\itau_0),\; (\itau\ctau_0\itau,\itau_0\ctau\itau_1),\; (\itau\ctau_1\itau,\itau_0\ctau\itau_0),\;(\itau\ctau_1\itau,\itau_0\ctau\itau_1),\;
(\itau\ctau_0\itau,\itau_1\ctau\itau_0),\; (\itau\ctau_0\itau,\itau_1\ctau\itau_1),\; (\itau\ctau_1\itau,\itau_1\ctau\itau_0),\;(\itau\ctau_1\itau,\itau_1\ctau\itau_1),\\
&(\ctau_0\itau\ctau_0,\ctau\itau_0\ctau),\; (\ctau_0\itau\ctau_1,\ctau\itau_0\ctau),\; (\ctau_0\itau\ctau_0,\ctau\itau_1\ctau),\; (\ctau_0\itau\ctau_1,\ctau\itau_1\ctau),\;
(\ctau_1\itau\ctau_0,\ctau\itau_0\ctau),\; (\ctau_1\itau\ctau_1,\ctau\itau_0\ctau),\; (\ctau_1\itau\ctau_0,\ctau\itau_1\ctau),\; (\ctau_1\itau\ctau_1,\ctau\itau_1\ctau),\;\\
&(\itau\ctau_0\itau\ctau_1,\itau_0\ctau),\;(\itau\ctau_0\itau\ctau_1,\itau_0\ctau),\;
(\itau\ctau_0,\itau_1\ctau\itau_0\ctau),\;(\itau\ctau_0\itau\ctau_1,\itau_1\ctau\itau_0\ctau),\;
(\itau\ctau_0\itau\ctau_1,\itau_1\ctau),\;
(\itau\ctau_0,\itau_1\ctau\itau_0\ctau),\;(\itau\ctau_1,\itau_1\ctau\itau_0\ctau),\\
&(\ctau_0\itau,\ctau\itau_0\ctau\itau_1),\;(\ctau_0\itau,\ctau\itau_0\ctau\itau_1),\;
(\ctau_1\itau\ctau_0\itau,\ctau\itau_0),\;(\ctau_1\itau\ctau_0\itau,\ctau\itau_0\ctau\itau_1),\;
(\ctau_1\itau,\ctau\itau_0\ctau\itau_1),\;
(\ctau_1\itau\ctau_0\itau,\ctau\itau_0),\;(\ctau_1\itau\ctau_0\itau,\ctau\itau_1),\\
&(\itau\ctau_0\itau,\itau_1\ctau\itau_0\ctau\itau_1),\; (\itau\ctau_1\itau,\itau_1\ctau\itau_0\ctau\itau_1),\;
(\itau\ctau_0\itau,\itau_0\ctau\itau_1),\; (\itau\ctau_0\itau,\itau_1\ctau\itau_0),\; (\itau\ctau_0\itau,\itau_1\ctau\itau_0\ctau\itau_1),\;
(\itau\ctau_0\itau,\itau_1\ctau\itau_0\ctau\itau_1),\; \\
&(\ctau_1\itau\ctau_0\itau\ctau_1,\ctau\itau_0\ctau),\; (\ctau_1\itau\ctau_0\itau\ctau_1,\ctau\itau_1\ctau),\;
(\ctau_0\itau\ctau_1,\ctau\itau_0\ctau),\; (\ctau_1\itau\ctau_0,\ctau\itau_0\ctau),\; (\ctau_1\itau\ctau_0\itau\ctau_1,\ctau\itau_0\ctau),\;
(\ctau_1\itau\ctau_0\itau\ctau_1,\ctau\itau_0\ctau),\;\\
&(\itau\ctau_0\itau\ctau_1,\itau_1\ctau\itau_0\ctau),\; (\itau\ctau_0\itau\ctau_1,\itau_1\ctau\itau_0\ctau),\;
(\itau\ctau_0\itau\ctau_1,\itau_1\ctau\itau_0\ctau),\\
&(\ctau_1\itau\ctau_0\itau,\ctau\itau_0\ctau\itau_1),\; (\ctau_1\itau\ctau_0\itau,\ctau\itau_0\ctau\itau_1),\;
(\ctau_1\itau\ctau_0\itau,\ctau\itau_0\ctau\itau_1),\;\\
&(\itau\ctau_0\itau,\itau_1\ctau\itau_0\ctau\itau_1),\;\; (\ctau_1\itau\ctau_0\itau\ctau_1,\ctau\itau_0\ctau).
\end{aligned}
}$$

$$\boxed{\small
\begin{aligned}
&\hskip150pt\mbox{The homomorphism $(h_{23},h_{34})$:}\\
&(1,1)\;\;(\itau_0,\itau_0),\;(1,\itau_1),\;(\ctau_0,\ctau_0),\;(\ctau_1,1),\\
&(\itau_0\ctau_0,\itau_0\ctau_0),\;(\itau_0\ctau_1,\itau_0),\;(\ctau_0,\itau_1\ctau_0),\;
(\ctau_1,\itau_1),\;
(\ctau_0\itau_0,\ctau_0\itau_0),\;(\ctau_0,\ctau_0\itau_1),\;(\ctau_1\itau_0,\itau_0),\;
(\ctau_1,\itau_1),\;\\
&(\itau_0\ctau_0\itau_0,\itau_0\ctau_0\itau_0),\; (\itau_0\ctau_0,\itau_0\ctau_0\itau_1),\; (\itau_0\ctau_1\itau_0,\itau_0),\;(\itau_0\ctau_1,\itau_0),\;
(\ctau_0\itau_0,\itau_1\ctau_0\itau_0),\; (\ctau_0,\itau_1\ctau_0\itau_1),\; (\ctau_1\itau_0,\itau_0),\;(\ctau_1,\itau_1),\;\\
&(\ctau_0\itau_0\ctau_0,\ctau_0\itau_0\ctau_0),\; (\ctau_0\itau_0\ctau_1,\ctau_0\itau_0),\; (\ctau_0,\ctau_0\itau_1\ctau_0),\; (\ctau_0,\ctau_0\itau_1),\;
(\ctau_1\itau_0\ctau_0,\itau_0\ctau_0),\; (\ctau_1\itau_0\ctau_1,\itau_0),\; (\ctau_1\ctau_0,\itau_1\ctau_0),\; (\ctau_1,\itau_1),\;\\
&(\itau_0\ctau_0\itau_0\ctau_1,\itau_0\ctau_0\itau_0),\;
(\itau_0\ctau_0,\itau_0\ctau_0\itau_1),\;
(\ctau_0\itau_0\ctau_0,\itau_1\ctau_0\itau_0\ctau_0),\;
(\ctau_0\itau_0\ctau_1,\itau_1\ctau_0\itau_0),\;
(\ctau_0,\itau_1\ctau_0\itau_1),\;
(\ctau_1\itau_0\ctau_0,\itau_0\ctau_0),\; (\ctau_1\itau_0\ctau_1,\itau_0),\\
&(\ctau_0\itau_0\ctau_0,\ctau_0\itau_0\ctau_0\itau_1),\;
(\ctau_0\itau_0\ctau_1,\ctau_0\itau_0),\;
(\ctau_1\itau_0\ctau_0\itau_0,\itau_0\ctau_0\itau_0),\;
(\ctau_1\itau_0\ctau_0,\itau_0\ctau_0\itau_1),\;
(\ctau_1\itau_0\ctau_1,\itau_0),\;
(\ctau_0\itau_0,\itau_1\ctau_0\itau_0),\;
(\ctau_0,\itau_1\ctau_0\itau_1),\\
&(\ctau_0\itau_0\ctau_0,\itau_1\ctau_0\itau_0\ctau_0\itau_1),\;
(\ctau_1\itau_0\ctau_1,\itau_0),\;
(\itau_0\ctau_0\itau_0\ctau_1,\itau_0\ctau_0\itau_0),\; (\ctau_1\itau_0\ctau_0\itau_0,\itau_0\ctau_0\itau_0),\; (\ctau_0\itau_0\ctau_1,\itau_1\ctau_0\itau_0),\;
(\ctau_1\itau_0\ctau_0,\itau_0\ctau_0\itau_1),\; \\
&(\ctau_1\itau_0\ctau_0\itau_0\ctau_1,\itau_0\ctau_0\itau_0),\;
(\ctau_0,\itau_1\ctau_0\itau_1),\;
(\ctau_0\itau_0\ctau_0,\ctau_0\itau_0\ctau_0\itau_1),\; (\ctau_0\itau_0\ctau_0,\itau_1\ctau_0\itau_0\ctau_0),\; (\ctau_1\itau_0\ctau_0,\itau_0\ctau_0\itau_1),\;
(\ctau_0\itau_0\ctau_1,\itau_1\ctau_0\itau_0),\;\\
&(\ctau_1\itau_0\ctau_0,\itau_0\ctau_0\itau_1),\; (\ctau_0\itau_0\ctau_0,\itau_1\ctau_0\itau_0\ctau_0\itau_1),\;
(\ctau_1\itau_0\ctau_0\itau_0\ctau_1,\itau_0\ctau_0\itau_0),\\
&(\ctau_0\itau_0\ctau_1,\itau_1\ctau_0\itau_0),\; (\ctau_1\itau_0\ctau_0\itau_0\ctau_1,\itau_0\ctau_0\itau_0),\;
(\ctau_0\itau_0\ctau_0,\itau_1\ctau_0\itau_0\ctau_0\itau_1),\;\\
&(\ctau_1\itau_0\ctau_0\itau_0\ctau_1,\itau_0\ctau_0\itau_0),\;\; (\ctau_0\itau_0\ctau_0,\itau_1\ctau_0\itau_0\ctau_0\itau_1).
\end{aligned}
}$$
\end{proof}

Proposition~\ref{FK21} shows that the homomorphisms into free Kuratowski monoids $FK_{n,p}$ for $n+p\le 3$ completely determine the structure of any free Kuratowski monoid. The following diagram shows the order structure of the free Kuratowski monoids $FK_{1,1}$ with the linear generating set $L_{1,1}=\{a,1,x\}$. We identify the elements of $FK_{1,1}$ with the Kuratowski words in the alphabet $L_{1,1}$. For two Kuratowski words $u,v$ an arrow $u\to v$ indicated that $u\le v$ in $FK_{1,1}$.
$$\xymatrix{
&ax\ar[rd]&\\
axa\ar[ru]\ar[rd]&&xax\ar[dd]\\
&xa\ar[ru]\\
a\ar[uu]\ar[r]&1\ar[r]&x
}$$

The following diagram shows the order structure of the free Kuratowski monoids $FK_{2,1}$ with the linear generating set $L_{2,1}=\{a,b,1,x\}$:
$$\xymatrix{
&&&ax\ar[r]&bxax\ar[d]\ar[ddrr]\\
&axb\ar[rd]\ar[rru]&&&bx\ar[rd]\\
axa\ar[ru]\ar[rd]&&bxaxb\ar[r]\ar[rruu]\ar[rrdd]&bxb\ar[ru]\ar[rd]&&xbx\ar[dddr]&xax\ar[l]\\
&bxa\ar[ru]\ar[rrd]&&&xa\ar[ru]\\
&&&xa\ar[r]&xaxb\ar[u]\ar[uurr]\\
a\ar[uuu]\ar[rr]&&b\ar[ruuu]\ar[rr]&&1\ar[rr]&&x
}$$
\smallskip

The free Kuratowski monoid $K_{1,2}$ is isomorphic to the free Kuratowski monoid $K_{2,1}$ endowed with the reversed partial order.

Proposition~\ref{FK21} has an interesting application.

\begin{proposition} All elements of any Kuratowski monoid $K$ are idempotents.
\end{proposition}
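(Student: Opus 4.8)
The plan is to reduce the assertion, via the functoriality of free Kuratowski monoids, to a finite check in the smallest free Kuratowski monoids. First I would observe that if $K$ is a Kuratowski monoid with linear generating set $\Lambda$, then $\Lambda$ is a finite pointed linearly ordered set and the identity map $\Lambda\to\Lambda$ is a morphism of pointed linearly ordered sets; so Proposition~\ref{catfree} yields a monotone semigroup homomorphism $\bar f:FK_\Lambda\to K$ with $\bar f\circ\eta_\Lambda=\mathrm{id}_\Lambda$, and this homomorphism is surjective because $\Lambda$ generates $K$. Hence $K$ is a homomorphic image of $FK_\Lambda$. Since the property ``every element is idempotent'' is plainly inherited by homomorphic images, by submonoids, and by arbitrary products (for $(m_i)_i^2=(m_i^2)_i=(m_i)_i$), it suffices to prove that every element of every free Kuratowski monoid $FK_L$ is idempotent. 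By Proposition~\ref{FK21} the monoid $FK_L$ embeds into some power of $FK_{1,2}\times FK_{2,1}$, so it remains to verify the idempotency property for $FK_{1,2}$ and $FK_{2,1}$; and since, as noted after Proposition~\ref{FK21}, these two monoids differ only by reversal of the partial order, they are isomorphic as monoids, so a single check --- say for $FK_{2,1}$ --- is enough.

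Next I would carry out this finite check. By Proposition~\ref{quot} the monoid $FK_{2,1}$ is identified with the $17$-element set $\K_{L_{2,1}}$ of Kuratowski words over the alphabet $L_{2,1}=\{a,b,1,x\}$ with $a<b<1<x$, where $a,b,x$ are idempotent and $ab=ba=a$. For a Kuratowski word $w=x_k\cdots x_n$ of length $\ge 2$ I would compute $w^2$ by first merging the two junction letters $x_n x_k$ (which lie on the same side of $1$, so they collapse to $\min$ or $\max$), and then repeatedly applying the length-stabilizing collapse relations $x_1y_1x_2y_2\sim x_1y_2$ and $y_2x_2y_1x_1\sim y_2x_1$, valid whenever $x_1\le x_2\le 1\le y_1\le y_2$ --- these being among the defining relations of $FK_L$ --- until the word is reduced back to $w$. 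Since the alphabet has four letters and the relevant Kuratowski words are short (length $\le 5$), this is a bounded, explicit computation; equivalently one can read the products off the displayed order diagram of $FK_{2,1}$, or simply delegate the $17$ verifications to a computer.

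The only genuinely delicate point is that $w^2$ reduces to exactly $w$ rather than to a strictly shorter Kuratowski word: one must ensure that a window of the form $x_1\le x_2\le 1\le y_1\le y_2$ is available at every reduction step and that no over-collapsing occurs. This is where the structural classification of Kuratowski words into $V_\mp$-, $V_\pm$-, $W_-$- and $W_+$-words (Theorem~\ref{kuratwords}) is used: inside $w^2$ one locates the ``central'' letter of the $V$/$W$-pattern of the second copy and collapses outward, the paired monotonicity of the two arms guaranteeing an admissible window at each stage. The main obstacle is therefore purely the bookkeeping of this induction, and the reason for passing first through Proposition~\ref{FK21} is precisely to confine that bookkeeping to the $17$-element monoid $FK_{2,1}$, where it becomes entirely routine (or mechanical).
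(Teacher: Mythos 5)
Your proposal is correct and follows essentially the same route as the paper's own proof: reduce to the free Kuratowski monoid $FK_L$ via the quotient map, invoke Proposition~\ref{FK21} to embed $FK_L$ into a power of $FK_{1,2}\times FK_{2,1}$, and finish with a direct verification on the $17$ elements of $FK_{2,1}$ (equivalently $FK_{1,2}$). The extra detail you supply on why idempotency passes to quotients, submonoids and products, and on how the reduction of $w^2$ back to $w$ is organized, only makes explicit what the paper leaves as a routine check.
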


\begin{proof} Let $L$ be the linear generating set of the Kuratowski monoid $K$. Since $K$ is a quotient monoid of the free Kuratowski monoid $FK_L$, it suffices to check that all elements of $FK_L$ are idempotents. By Proposition~\ref{FK21}, the free Kuratowski monoid $FK_L$ embeds into some power of the monoid $FK_{1,2}\times FK_{2,1}$. So, it suffices to check that each element of the free Kuratowski monoids $FK_{1,2}$ and $FK_{2,1}$ is an idempotent. This can be seen by a direct verification of each of 17 elements of $FK_{1,2}$ (or its algebraically isomorphic copy $FK_{2,1}$).
\end{proof}


\begin{thebibliography}{}

\bibitem{Kur22} K.~Kuratowski, {\em Sur l'op\'eration $\overline{A}$ de l'Analysis Situs}, Fund. Math. {\bf 3} (1922) 182--199.

\bibitem{GJ} B.J.~Gardner, M.~Jackson, {\em The Kuratowski Closure-Complement Theorem}, New Zealand J. Math. {\bf 38} (2008), 9--44.

\bibitem{CM} R.~Graham, D.~Knuth, O.~Patashnik, {\em Concrete mathematics. A foundation for computer science}, Addison-Wesley Publishing Company, Reading, MA, 1994.

\bibitem{SW} J.~Shallit, R.~Willard, {\em Kuratowski's Theorem for two closure operators}, preprint
 (arXiv:1109.1227).


\end{thebibliography}
\end{document}